
\documentclass[11pt]{amsart}
\textwidth=5in
\textheight=7.5in
\begin{document}

\def\dbl{[\hskip -1pt[}
\def\dbr{]\hskip -1pt]}
\title[Holomorphic versus algebraic equivalence]{Holomorphic versus algebraic equivalence\\  for deformations of real-algebraic CR manifolds}
\author{Bernhard Lamel}
\address{Universit\"at Wien, Fakult\"at f\"ur Mathematik, Nordbergstrasse 15, A-1090 Wien, \"Osterreich}
\email{lamelb@member.ams.org}%
\thanks{The first author is supported by the Austrian Federal Ministry of Science and Research BMWF, START Prize Y377. The second author was partially supported by the French National Agency for Research (ANR), project  DynPDE (programmes blancs). Both authors were partially supported by the Amadeus program of the Partenariat Hubert Curien and the FWF-ANR project CRARTIN}
\author{Nordine Mir}
\address{Universit\'e de Rouen, Laboratoire de Math\'ematiques Rapha\"el Salem, UMR 6085 CNRS, Avenue de
l'Universit\'e, B.P. 12, 76801 Saint Etienne du Rouvray, France}
\email{Nordine.Mir@univ-rouen.fr}
\subjclass[2000]{32H02, 32V05, 32V20, 32V35, 32V40} \keywords{holomorphic equivalence, algebraic equivalence, CR manifold, CR orbits}


\def\Label#1{\label{#1}}
\def\1#1{\ov{#1}}
\def\2#1{\widetilde{#1}}
\def\6#1{\mathcal{#1}}
\def\4#1{\mathbb{#1}}
\def\3#1{\widehat{#1}}
\def\7#1{\widehat{#1}}
\def\K{{\4K}}
\def\LL{{\4L}}

\def \MM{{\4M}}
\def \S{{\4S}^{2N'-1}}

\def \B{{\4B}^{2N'-1}}

\def \H{{\4H}^{2l-1}}

\def \F{{\4H}^{2N'-1}}

\def \LL{{\4L}}

\def\Re{{\sf Re}\,}
\def\Im{{\sf Im}\,}
\def\id{{\sf id}\,}

\def\s{s}
\def\k{\kappa}
\def\ov{\overline}
\def\span{\text{\rm span}}
\def\ad{\text{\rm ad }}
\def\tr{\text{\rm tr}}
\def\xo {{x_0}}
\def\Rk{\text{\rm Rk\,}}
\def\sg{\sigma}
\def \emxy{E_{(M,M')}(X,Y)}
\def \semxy{\scrE_{(M,M')}(X,Y)}
\def \jkxy {J^k(X,Y)}
\def \gkxy {G^k(X,Y)}
\def \exy {E(X,Y)}
\def \sexy{\scrE(X,Y)}
\def \hn {holomorphically nondegenerate}
\def\hyp{hypersurface}
\def\prt#1{{\partial \over\partial #1}}
\def\det{{\text{\rm det}}}
\def\wob{{w\over B(z)}}
\def\co{\chi_1}
\def\po{p_0}
\def\fb {\bar f}
\def\gb {\bar g}
\def\Fb {\ov F}
\def\Gb {\ov G}
\def\Hb {\ov H}
\def\zb {\bar z}
\def\wb {\bar w}
\def \qb {\bar Q}
\def \t {\tau}
\def\z{\chi}
\def\w{\tau}
\def\Z{\zeta}
\def\phi{\varphi}
\def\eps{\epsilon}

\def \T {\theta}
\def \Th {\Theta}
\def \L {\Lambda}
\def\b {\beta}
\def\a {\alpha}
\def\o {\omega}
\def\l {\lambda}

\def \im{\text{\rm Im }}
\def \re{\text{\rm Re }}
\def \Char{\text{\rm Char }}
\def \supp{\text{\rm supp }}
\def \codim{\text{\rm codim }}
\def \Ht{\text{\rm ht }}
\def \Dt{\text{\rm dt }}
\def \hO{\widehat{\mathcal W}}
\def \cl{\text{\rm cl }}
\def \bS{\mathbb S}
\def \bK{\mathbb K}
\def \bD{\mathbb D}
\def \bC{\mathbb C}
\def \bL{\mathbb L}
\def \bZ{\mathbb Z}
\def \bN{\mathbb N}
\def \scrF{\mathcal F}
\def \scrK{\mathcal K}
\def \mc #1 {\mathcal {#1}}
\def \scrM{\mathcal M}
\def \cR{\mathcal R}
\def \scrJ{\mathcal J}
\def \scrA{\mathcal A}

\def \scrV{\mathcal V}
\def \scrL{\mathcal L}
\def \scrE{\mathcal E}
\def \hol{\text{\rm hol}}
\def \aut{\text{\rm aut}}
\def \Aut{\text{\rm Aut}}
\def \J{\text{\rm Jac}}
\def\jet#1#2{J^{#1}_{#2}}
\def\gp#1{G^{#1}}
\def\gpo{\gp {2k_0}_0}
\def\emmp {\scrF(M,p;M',p')}
\def\rk{\text{\rm rk\,}}
\def\Orb{\text{\rm Orb\,}}
\def\Exp{\text{\rm Exp\,}}
\def\Span{\text{\rm span\,}}
\def\d{\partial}
\def\D{\3J}
\def\pr{{\rm pr}}

\def \CZZ {\C \dbl Z,\zeta \dbr}
\def \D{\text{\rm Der}\,}
\def \Rk{\text{\rm Rk}\,}
\def \CR{\text{\rm CR}}
\def \ima{\text{\rm im}\,}
\def \I {\mathcal I}

\def \M {\mathcal M}

\newtheorem{Thm}{Theorem}[section]
\newtheorem{Cor}[Thm]{Corollary}
\newtheorem{Pro}[Thm]{Proposition}
\newtheorem{Lem}[Thm]{Lemma}

\theoremstyle{definition}\newtheorem{Def}[Thm]{Definition}

\theoremstyle{remark}
\newtheorem{Rem}[Thm]{Remark}
\newtheorem{Exa}[Thm]{Example}
\newtheorem{Exs}[Thm]{Examples}

\numberwithin{equation}{section}

\def\bl{\begin{Lem}}
\def\el{\end{Lem}}
\def\bp{\begin{Pro}}
\def\ep{\end{Pro}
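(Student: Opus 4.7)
The plan is to upgrade a given holomorphic equivalence within the real-algebraic deformation family to a real-algebraic one by combining the jet parametrization of biholomorphisms between real-algebraic CR manifolds with an Artin--Popescu type algebraic approximation argument carried out over the parameter space.

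First, I would reduce to an infinitesimal problem at a generic base point. Fix a point $p_0$ of the reference manifold $M$ at which $M$ is minimal and at which the relevant nondegeneracy hypotheses (holomorphic nondegeneracy and finite type) hold; such points are dense by real-algebraicity. For any nearby deformation $M_t$, the equivalence $h_t\colon (M,p_0)\to (M_t,p_t)$ is determined, by the jet determination results underpinning this area, by its $\k$-jet $j^\k_{p_0} h_t$ for a fixed integer $\k$, and moreover depends real-analytically on that jet. The initial task is therefore to understand the behaviour of the map $t\mapsto j^\k_{p_0} h_t$.

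Second, I would organise the problem on jet space. The set $V\subset G^\k\times T$ of pairs $(\sg,t)$ such that $\sg$ is the $\k$-jet at $p_0$ of a germ of biholomorphism sending $(M,p_0)$ onto $(M_t,p_t)$ is a real-algebraic subvariety, since both $M$ and the family $M_t$ are given polynomially. The holomorphic family $h_t$ furnishes a holomorphic section of the projection $V\to T$ through the distinguished jet at $t=0$. Applying an Artin/Popescu approximation in $t$, I would produce a real-algebraic section of the same projection agreeing with the holomorphic one to arbitrarily high order at $t=0$. The jet parametrization then converts this algebraic jet section into a genuine real-algebraic family $\2 h_t$ of biholomorphisms; agreement of jets to sufficiently high order, together with the uniqueness clause of the parametrization, forces $\2 h_t = h_t$, proving that the original family is itself algebraic.

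The main obstacle, as always in this circle of ideas, will be to handle the points of $M$ where minimality or holomorphic nondegeneracy degenerates in the family, and to make the propagation of algebraicity from the generic stratum to the whole of $M$ uniform in the parameter $t$. Concretely, I expect to have to run a Segre set/CR orbit iteration fiberwise on $M_t$ and show that at each stage the algebraicity obtained at the generic stratum transports along the orbits in a way that remains algebraic in $(z,t)$ jointly, rather than merely in $z$ for each fixed $t$. This joint-algebraic propagation along CR orbits of the deformed manifolds is where I anticipate the technical core of the proof to lie.
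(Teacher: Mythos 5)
Your strategy rests on a jet parametrization theorem: that a biholomorphism between the fibers is determined by a fixed finite jet at the base point \emph{and depends real-analytically (or algebraically) on that jet}, so that an algebraic section of the jet-space variety $V\to T$ can be converted back into an algebraic family of maps. This is precisely the method of Baouendi--Rothschild--Zaitsev, and it is available only under the stronger hypothesis that the reference fiber is \emph{finitely nondegenerate}; the whole point of the present setting is that the fibers are merely \emph{holomorphically nondegenerate}, for which no such parametrization formula $h=\Psi(Z,j^\kappa_{p_0}h)$ is known (the introduction states explicitly that this weakening ``makes it impossible to use their methods''). The paper's substitute for the missing parametrization is the entire content of Section~\ref{s:algprop}: one only obtains that each component of $h$ is \emph{algebraic over} the quotient field of the ring $\6A^h$ generated by $Z$ and the jet functions $\6H(u)=(\partial^{\alpha}h(0,u))_{|\alpha|\le m_0}$ (Propositions~\ref{p:mircag} and~\ref{p:keyalg}, proved by an iteration along Segre sets with the rings $\bar{\6B}^f_j$, $\6D^f_j$). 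Because $h$ is then only a root of a polynomial system rather than an explicit function of the jets, two further steps become necessary that your sketch has no counterpart for: the symmetric-function construction of Lemma~\ref{l:crucial}, which packages the condition $h(M)\subset M'$ into polynomial equations on $\6H(u)$ alone so that Popescu's nested approximation applies, and the order-of-vanishing argument of Lemma~\ref{l:final}, which shows that the algebraic approximants still map $M$ into $M'$.

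A second, independent problem is your concluding step: you assert that high-order agreement of jets plus uniqueness ``forces $\widetilde h_t=h_t$, proving that the original family is itself algebraic.'' That conclusion is false in general --- Example~\ref{ex:1} exhibits a non-algebraic self-equivalence $(z,w)\mapsto(e^sz,e^{2s}w)$ of a trivial deformation of the Lewy hypersurface --- and indeed the theorem only claims approximation to finite order, not algebraicity of $h$. An algebraic jet section agreeing with the holomorphic one to order $\ell$ at $t=0$ yields, at best, an algebraic family agreeing with $h_t$ to order $\ell$; no uniqueness clause can upgrade this to equality. Finally, the ``technical core'' you anticipate (joint algebraicity in $(z,t)$ of the propagation along CR orbits) is correctly identified as the heart of the matter, but you give no mechanism for it; in the paper this is exactly what Proposition~\ref{p:keyalg} accomplishes, and it requires the algebraic-dependence formalism over the rings $\6S^f_\ell$ and $\6A^f$ rather than a parametrization argument.
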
}
\def\bt{\begin{Thm}}
\def\et{\end{Thm}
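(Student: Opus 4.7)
The plan is to reduce the equivalence of the holomorphic and algebraic categories, for members of the deformation, to an application of Artin--Popescu approximation, after recasting the equivalence as a polynomial system on jets of CR maps whose coefficients depend algebraically on the deformation parameter. First I would set up the geometric framework: fix real-algebraic defining equations $\rho(Z,\bar Z,t)$ for the family, associate to each $t$ the Segre varieties $Q_w^t=\{Z:\rho(Z,\bar w,t)=0\}$, and use the fact that any holomorphic CR equivalence between two members must send Segre varieties to Segre varieties. Iterating this invariance along the iterated Segre map, together with the nondegeneracy hypotheses the statement assumes, shows that such an equivalence $H$ is determined by a finite-order jet at the basepoint.

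Second, I would encode this finite determinacy as a real-algebraic system $\Phi(t,t';H)=0$ whose unknowns are the jet-coordinates of the candidate equivalence and whose coefficients are real-algebraic in the parameters $(t,t')$ of the two members. The existence of a holomorphic equivalence $H_0$ yields a convergent analytic solution to this system; parametric Artin--Popescu approximation then produces an algebraic solution $\widetilde H$ that agrees with $H_0$ to arbitrarily high order at the basepoint. Combining the nondegeneracy hypothesis with Segre invariance then upgrades $\widetilde H$ into a genuine algebraic CR equivalence of the two members of the family, which is the desired conclusion.

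The main obstacle will be to keep the entire argument uniform in the deformation parameter. One must ensure that the order of finite determinacy does not jump across the family, that the iterated Segre map and the associated algebraic stratifications of CR orbits depend algebraically on $t$, and that the approximation step preserves both the basepoint and whatever normalization is attached to the equivalence. The deepest technical step is likely the parametric propagation of the equivalence along chains of Segre varieties, so that points lying in nontrivial CR orbits can be handled simultaneously with, and coherently with, the basepoint; this is where the Segre machinery must be combined most carefully with the algebraic dependence on $t$ before approximation can be invoked.
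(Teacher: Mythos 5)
Your outline correctly identifies the shape of the problem (encode the equivalence as an algebraic system, invoke a parametric version of Artin approximation, keep everything uniform in the parameter), but it rests on a step that is actually unavailable under the paper's hypotheses and thereby misses the central difficulty.

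The key false step is the claim that, after propagating Segre invariance along the iterated Segre map, ``such an equivalence $H$ is determined by a finite-order jet at the basepoint.'' Finite jet determination of CR maps holds under \emph{finite nondegeneracy}, not under mere \emph{holomorphic nondegeneracy}; the paper points out explicitly that the finitely nondegenerate case was already settled by Baouendi--Rothschild--Zaitsev, and that the whole difficulty in the weaker setting is that their jet-parametrization method is unavailable. So you cannot reduce the unknowns to a finite jet $H$ at the basepoint and write a closed polynomial system $\Phi(t,t';H)=0$. What the paper proves instead (Propositions~\ref{p:keyalg} and~\ref{p:mircag}) is weaker and of a different nature: each component of $h$ is merely \emph{algebraic over the quotient field} of the ring generated by $(z,w,u)$ and the jet functions $\6H(u)=(\partial^\alpha h(0,u))_{|\alpha|\le m_0}$. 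The unknown is therefore the pair $(h,\6H)$, not a finite jet.

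Two further missing ingredients follow from this. First, because the auxiliary unknown $\6H$ must remain a function of $u$ alone while $h$ depends on all variables, you cannot apply plain Artin approximation; you need Popescu's nested/subring version (the paper devotes Theorem~\ref{t:popescu} to exactly this point, and cites Gabrielov's example to show why the subring condition is genuinely delicate). Your proposal treats ``parametric Artin--Popescu approximation'' as a black box without noting that this nested structure is precisely what Popescu buys. Second, solving the polynomial system does not by itself force the approximants to map $M$ into $M'$; the paper augments the system by the real-algebraic constraints $\Theta^j_{\gamma,\nu}(u,\Lambda,\overline\Lambda)=0$ built from the resultant-type polynomials $\6K^j$ of Lemma~\ref{l:crucial}, and then proves inclusion $h^\ell(M)\subset M'$ by a separate contradiction argument (Lemma~\ref{l:final}) using the minimality of the $\nu^j_0$. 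The phrase ``combining the nondegeneracy hypothesis with Segre invariance then upgrades $\widetilde H$ into a genuine algebraic CR equivalence'' conceals this entire construction, which is where most of the work lies.
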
}
\def\bc{\begin{Cor}}
\def\ec{\end{Cor}}
\def\bd{\begin{Def}}
\def\ed{\end{Def}}
\def\be{\begin{Exa}}
\def\ee{\end{Exa}}
\def\bpf{\begin{proof}}
\def\epf{\end{proof}}
\def\ben{\begin{enumerate}}
\def\een{\end{enumerate}}

\newcommand{\dbar}{\bar\partial}
\newcommand{\genmat}{\lambda}
\newcommand{\polynorm}[1]{{|| #1 ||}}
\newcommand{\vnorm}[1]{\left\|  #1 \right\|}
\newcommand{\asspol}[1]{{\mathbf{#1}}}
\newcommand{\Cn}{\mathbb{C}^n}
\newcommand{\Cd}{\mathbb{C}^d}
\newcommand{\Cm}{\mathbb{C}^m}
\newcommand{\C}{\mathbb{C}}
\newcommand{\CN}{\mathbb{C}^N}
\newcommand{\CNp}{\mathbb{C}^{N^\prime}}
\newcommand{\Rd}{\mathbb{R}^d}
\newcommand{\Rn}{\mathbb{R}^n}
\newcommand{\RN}{\mathbb{R}^N}
\newcommand{\R}{\mathbb{R}}
\newcommand{\bR}{\mathbb{R}}
\newcommand{\N}{\mathbb{N}}
\newcommand{\dop}[1]{\frac{\partial}{\partial #1}}
\newcommand{\vardop}[3]{\frac{\partial^{|#3|} #1}{\partial {#2}^{#3}}}
\newcommand{\br}[1]{\langle#1 \rangle}
\newcommand{\infnorm}[1]{{\left\| #1 \right\|}_{\infty}}
\newcommand{\onenorm}[1]{{\left\| #1 \right\|}_{1}}
\newcommand{\deltanorm}[1]{{\left\| #1 \right\|}_{\Delta}}
\newcommand{\omeganorm}[1]{{\left\| #1 \right\|}_{\Omega}}
\newcommand{\nequiv}{{\equiv \!\!\!\!\!\!  / \,\,}}
\newcommand{\bk}{\mathbf{K}}
\newcommand{\p}{\prime}
\newcommand{\tV}{\mathcal{V}}
\newcommand{\poly}{\mathcal{P}}
\newcommand{\ring}{\mathcal{A}}
\newcommand{\ringk}{\ring_k}
\newcommand{\ringktwo}{\mathcal{B}_\mu}
\newcommand{\germs}{\mathcal{O}}
\newcommand{\On}{\germs_n}
\newcommand{\mcl}{\mathcal{C}}
\newcommand{\formals}{\mathcal{F}}
\newcommand{\Fn}{\formals_n}
\newcommand{\autM}{{\Aut (M,0)}}
\newcommand{\autMp}{{\Aut (M,p)}}
\newcommand{\holmaps}{\mathcal{H}}
\newcommand{\biholmaps}{\mathcal{B}}
\newcommand{\autmaps}{\mathcal{A}(\CN,0)}
\newcommand{\jetsp}[2]{ G_{#1}^{#2} }
\newcommand{\njetsp}[2]{J_{#1}^{#2} }
\newcommand{\jetm}[2]{ j_{#1}^{#2} }
\newcommand{\glnc}{\mathsf{GL_n}(\C)}
\newcommand{\glmc}{\mathsf{GL_m}(\C)}
\newcommand{\glc}{\mathsf{GL_{(m+1)n}}(\C)}
\newcommand{\glk}{\mathsf{GL_{k}}(\C)}
\newcommand{\smC}{\mathcal{C}^{\infty}}
\newcommand{\anC}{\mathcal{C}^{\omega}}
\newcommand{\kC}{\mathcal{C}^{k}}
\newcommand{\fps}[1]{\C[[#1]]}
\newcommand{\cps}[1]{\C\{#1\}}

\newcommand{\dopat}[2]{\frac{\partial}{\partial #1}\biggr|_{#2}}
\newcommand{\dopt}[2]{\frac{\partial #1}{\partial #2}}


\maketitle

\begin{abstract} 
We consider (small) algebraic deformations of germs of real-algebraic CR submanifolds in complex space and study the biholomorphic equivalence problem for such deformations. We show that two algebraic deformations of minimal holomorphically nondegenerate real-algebraic CR submanifolds are holomorphically equivalent if and only if they are algebraically equivalent.
\end{abstract}


\section{Introduction}\Label{int}

Since Poincar\'e's celebrated paper \cite{Po} published in 1907, there has been a growing literature concerned with the equivalence problem for real submanifolds in complex space (see e.g.\ \cite{G, Sto, HY1, HY2, BRZ1,BRZ2, BMR} for some recent works as well as the references therein). One  interesting phenomenon, observed by Webster for biholomorphisms of 
Levi nondegenerate hypersurfaces \cite{We1}, is that the biholomorphic equivalence of some types of 
real-algebraic submanifolds of a complex space implies their algebraic equivalence.

In this paper, we show that this very phenomenon holds for algebraic deformations of germs of  
minimal holomorphically nondegenerate 
real-algebraic CR submanifolds in complex space. 
Let us 
recall that a germ of a real-algebraic CR submanifold $(M,p)\subset (\C^n,p)$ is {\em minimal} if there exists no proper CR submanifold $N\subset M$ 
through $p$ of the same CR dimension as $M$. It is {\em holomorphically nondegenerate} 
if there exists no nontrivial holomorphic vector field tangent to $M$ near $p$ (see \cite{St}). 

An algebraic deformation of $(M,p)$ is a real-algebraic family of germs at $p$ of real-algebraic CR submanifolds  
$(M_s,p)_{s\in\R^k}$ in $\C^n$, defined for $s\in\R^k$ near $0$, such that $M_0 = M$. We say that two 
such deformations $(M_s,p)_{s\in\R^k}$ and $(N_t,p')_{t\in\R^k}$ are {\em biholomorphically equivalent} 
if there exists a germ of a real-analytic diffeomorphism
$\varphi \colon (\R^k,0) \to (\R^k,0)$ and a holomorphic submersion
$B\colon(\C_z^{n}\times \C_u^{k},(p,0)) \to (\Cn,p')$ such that $z\mapsto B(z,s)$ is a biholomorphism sending  
$(M_s,p)$ to $(N_{\varphi(s)},p')$ for all $s\in \R^k$ close to $0$. We shall say that such a pair $(B,\varphi)$ is a biholomorphism between the two deformations\footnote{By slight abuse of language, we shall always identify the map $\phi\colon (\R^k,0) \to (\R^k,0)$ with its complexification from $(\C^{k},0)$ to $(\C^k,0)$.}. We also say that they are {\em algebraically equivalent} if one can choose $\varphi$ and $B$ to be furthermore algebraic.
Our main result is as follows.

\begin{Thm}
	\label{thm:main2} Two algebraic deformations of minimal holomorphically nondegenerate real-algebraic CR submanifolds of $\C^n$ are algebraically equivalent if and only if they are biholomorphically equivalent.
\end{Thm}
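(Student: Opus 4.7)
One direction of the theorem is tautological. For the nontrivial direction, let $(B,\varphi)$ be a biholomorphism of the deformations $(M_s)$ and $(N_t)$, and let $\rho_j(z,\bar z, s)$ and $\rho'_j(w,\bar w, t)$ be real-algebraic defining functions of the total spaces $\mathcal{M}=\{(z,s):z\in M_s\}$ and $\mathcal{N}=\{(w,t): w\in N_t\}$, polynomial (or algebraic) in the parameter. The pair $(B,\varphi)$ solves the real-algebraic system formed by the reflection identities
\[
\rho'_j\bigl(B(z,s),\overline{B(z,s)},\varphi(s)\bigr)\equiv 0\pmod{(\rho_1,\dots,\rho_d)},
\]
together with the nonvanishing of the Jacobian of $B(\cdot,s)$ at $p$ and the reality of $\varphi$. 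The task is to replace this real-analytic solution by an algebraic one.

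The plan combines two tools. First, by Artin's approximation theorem for real-algebraic systems, for every $N\ge 0$ there exists an algebraic pair $(B^{(N)},\varphi^{(N)})$ solving the same system and agreeing with $(B,\varphi)$ up to order $N$ at $(p,0)$; in particular each $(B^{(N)},\varphi^{(N)})$ is an algebraic biholomorphism of the given deformations. Second, I would prove a parametric finite jet determination statement: there exists an integer $N_0$ such that any two biholomorphisms of the deformations agreeing to order $N_0$ at $(p,0)$ coincide. Taking $N>N_0$ in the approximation forces $(B,\varphi)=(B^{(N)},\varphi^{(N)})$, which is algebraic.

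The main obstacle is the parametric finite jet determination. Classical results (Baouendi--Ebenfelt--Rothschild, Zaitsev, Lamel) establish such finite determination for biholomorphisms of a single minimal holomorphically nondegenerate real-algebraic CR submanifold, via iterated Segre set constructions combined with the nondegeneracy estimates furnished by holomorphic nondegeneracy. In our setting, both minimality and holomorphic nondegeneracy persist for the germ of $M_s$ at $p$ (and of $N_t$ at $p'$) for small $s,t$, so the Segre chain linking $(p,0)$ to an open neighbourhood can be carried out with a uniform length in the parameter. Differentiating the complexified parametric reflection identity
\[
\rho'_j\bigl(B(z,s),\bar B(\chi,s),\varphi(s)\bigr)=0 \quad \text{on}\quad \{\rho(z,\chi,s)=0\}
\]
along these chains yields algebraic relations in $(z,s)$ which determine $(B,\varphi)$ from a finite jet at $(p,0)$. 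The delicate point is to carry out this argument uniformly in $s$, producing a jet-determination bound that simultaneously controls the $z$-dependence of $B$ and, crucially, the a priori unknown parameter map $\varphi$; the latter must be pinned down by the reflection identities evaluated transversally to the fibers of $\mathcal M$.
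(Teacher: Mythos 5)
Your plan has a fatal gap: the parametric finite jet determination statement you intend to prove is false. Consider the trivial deformation $M_s=M$ of the Lewy hypersurface (the paper's Example~\ref{ex:1}). For every integer $k$, the pair $\varphi(s)=s$, $B(z,w,s)=(e^{s^k}z,\,e^{2s^k}w)$ is a biholomorphism of this deformation onto itself; it agrees with the identity to order $k$ at the origin, yet it is not the identity and is not algebraic. Hence there is no integer $N_0$ such that agreement to order $N_0$ at $(p,0)$ forces two biholomorphisms of the deformations to coincide. Worse, if your jet determination claim were true, your argument would conclude that the original $(B,\varphi)$ is itself algebraic --- and the same example shows that this is false. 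The theorem only asserts that \emph{some} algebraic biholomorphism exists, and any correct proof must allow the algebraic approximants to differ from $(B,\varphi)$ beyond a prescribed order. The flexibility of the map in the parameter direction is exactly what makes the problem nontrivial and what distinguishes it from the single-manifold algebraicity theorem of Baouendi--Ebenfelt--Rothschild.

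There are two secondary issues. First, for $(B^{(N)},\varphi^{(N)})$ to be a biholomorphism of the deformations, $\varphi^{(N)}$ must depend on $s$ alone and not on $(z,s)$. Artin's theorem does not provide this ``subring condition''; one needs Popescu's stronger approximation theorem, which is precisely what the paper invokes. Second, the uniform Segre-chain/reflection-identity scheme you outline is essentially the Baouendi--Rothschild--Zaitsev strategy, which, as the paper points out, works only under finite nondegeneracy of $M_0$ and cannot be pushed through under the weaker hypothesis of holomorphic nondegeneracy. The paper's key step is of a different nature: it proves that $h(z,u)$ is \emph{algebraic over the field} generated by $z$, $u$, and the fiberwise jet $\mathcal{H}(u)=(\partial^{\alpha}h(0,u))_{|\alpha|\le m_0}$ (Propositions~\ref{p:keyalg} and~\ref{p:mircag}); this is an algebraic-dependence statement, not a finite jet determination statement. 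It then sets up an auxiliary real-algebraic system in which $\mathcal{H}$ appears as an extra unknown depending only on $u$, approximates $(h,\mathcal{H})$ using Popescu's theorem, and verifies separately (Lemma~\ref{l:final}) that the approximants do send $M$ into $M'$.
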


For a completely trivial deformation (i.e. $k=0$), 
Theorem~\ref{thm:main2} is actually a consequence of the algebraicity theorem of
Baouendi, Ebenfelt and Rothschild \cite{BER96}, where they prove that
every local biholomorphism sending holomorphically nondegenerate and minimal real-algebraic generic submanifolds of $\C^n$ must necessarily be algebraic. This is not necessarily true for biholomorphisms between deformations, even for constant 
ones, as the following example shows.

\begin{Exa}\label{ex:1}
	Consider the Lewy hypersurface $M$ in $\C^2_{(z,w)}$ defined by 
	$\im w = |z|^2$, and consider the trivial deformation $M_s = M$ for $s\in \R^k$.  A biholomorphic 
	map of $(M_s,0)_{s\in\R^k}$ 
	to itself which is not algebraic 
	is e.g. given by $\varphi(s) = s$,   
	 $B(z,w,s)= (e^s z, e^{2s} w)$.
\end{Exa}

The main point of this example is that one cannot expect a  biholomorphism between two deformations  
to be algebraic. However, in Example~\ref{ex:1}, all the "fibers" $M_s$ of the deformations are
the same. It is not difficult to show, 
by using the mentioned result of \cite{BER96}, that the conclusion of Theorem~\ref{thm:main2} 
holds when all the fibers of the deformation are algebraically equivalent. 
One approach which has been successful for more general deformations 
is to approximate a given 
biholomorphism by algebraic ones;  Theorem~\ref{thm:main2} is a consequence of 
such an approximation statement.

\begin{Thm}
	\label{thm:main2app} Let $(M_s,p)_{s\in\R^k}$ and $(N_t,p')_{t\in\R^k}$ be 
	algebraic deformations of real-algebraic holomorphically nondegenerate minimal
	CR submanifolds of $\C^n$, and assume that $(B,\varphi)$ is a biholomorphism between
	 $(M_s,p)_{s\in\R^k}$ and $(N_t,p')_{t\in\R^k}$.
	 Then for every integer $\ell>0$ there exists an algebraic
	biholomorphism $(B^\ell,\varphi^\ell)$ between
	 $(M_s,p)_{s\in\R^k}$ and $(N_t,p')_{t\in\R^k}$ which agrees with $(B,\varphi)$
	up to order $\ell$ at $(p,0)$.
\end{Thm}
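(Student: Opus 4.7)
The plan is to exhibit $(B,\varphi)$ as a convergent-power-series solution of a finite polynomial system and then invoke Artin's approximation theorem to produce algebraic solutions $(B^\ell,\varphi^\ell)$ agreeing with $(B,\varphi)$ to order $\ell$ at $(p,0)$; these solutions will automatically be biholomorphisms of the two deformations. To set this up, I would pick real-polynomial vector-valued defining functions $\rho(z,\bar z,s)$ and $\rho'(w,\bar w,t)$ for the two deformations near $(p,0)$ and $(p',0)$. The hypothesis that $(B,\varphi)$ is a biholomorphism gives a real-analytic matrix $a(z,\bar z,s)$, invertible at the reference point, with
\[
 \rho'\bigl(B(z,s),\overline{B(z,s)},\varphi(s)\bigr)\;=\;a(z,\bar z,s)\,\rho(z,\bar z,s),
\]
and complexifying $\bar z\rightsquigarrow\zeta$ and $\bar s\rightsquigarrow\sigma$ turns this into a holomorphic identity in $(z,\zeta,s,\sigma)$ near the origin in $\C^{2(n+k)}$ involving the unknowns $B(z,s)$, $\bar B(\zeta,\sigma)$, $\varphi(s)$, $\bar\varphi(\sigma)$ and the entries of $a$.

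The heart of the argument, and the step that I expect to be the main obstacle, will be to upgrade this single identity to a \emph{finite} polynomial system of the form
\[
 P_j\bigl(z,\zeta,s,\sigma;\ J^N_{(z,s)}B,\ J^N_{(\zeta,\sigma)}\bar B,\ J^N_s\varphi,\ J^N_\sigma\bar\varphi,\ (a_{\alpha\beta})\bigr)\;=\;0
\]
which, together with the natural reality constraints $\bar B(\bar z,\bar s)=\overline{B(z,s)}$ and $\bar\varphi(\bar s)=\overline{\varphi(s)}$, characterizes biholomorphisms of the two deformations. Here minimality of the central fiber $M_0$ will be used to iterate the defining-function identity along Segre chains, propagating it through a full neighborhood of the origin, while holomorphic nondegeneracy of $M_0$ will be invoked, via the jet-prolongation argument of \cite{BER96}, to cut down the resulting infinite family of relations to a finite algebraic system with coefficients polynomial in $(z,\zeta,s,\sigma)$. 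The parametric feature is essential: even though $B$ and $\varphi$ are only holomorphic and not algebraic in $s$, the nondegeneracy of $M_0$ should force all the \emph{non-algebraic} dependence on $s$ to be carried by $\varphi$, so that $B$ becomes algebraic over the ring generated by $z$, $s$, $\varphi(s)$ and $\bar\varphi(\bar s)$.

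Granted such a polynomial system, the approximation step is standard: apply Artin's approximation theorem, treating the system symmetrically together with its complex conjugate so as to preserve reality, in order to obtain, for each prescribed $\ell>0$, an algebraic solution $(B^\ell,\bar B^\ell,\varphi^\ell,\bar\varphi^\ell,a^\ell)$ that agrees with the convergent solution to order $\ell$ at the origin. Choosing $\ell$ large enough, the openness of submersivity and of invertibility at the reference point ensure that $B^\ell(\cdot,s)$ remains a biholomorphic submersion in $z$ and that $a^\ell$ remains invertible; the relation $\rho'(B^\ell,\overline{B^\ell},\varphi^\ell)=a^\ell\,\rho$ then forces $z\mapsto B^\ell(z,s)$ to send $(M_s,p)$ onto $(N_{\varphi^\ell(s)},p')$ for every small $s$, so that $(B^\ell,\varphi^\ell)$ is the algebraic biholomorphism of deformations demanded by Theorem~\ref{thm:main2app}.
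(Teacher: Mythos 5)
Your high-level plan (encode the mapping in a finite polynomial system, approximate, check that the approximations remain biholomorphisms of the deformations) is the right template, but two of the central steps would fail as written, and they are exactly the points the paper identifies as the hard part.

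First, applying Artin's theorem, even "symmetrized with the conjugate system," cannot produce a $\varphi^\ell$ that depends \emph{only} on $s$: Artin approximates all unknowns as functions of all variables, so the approximate solution would generically give a $\varphi^\ell(z,s)$, which is not a biholomorphism of deformations. What is needed is the \emph{nested} (subring-preserving) approximation theorem of Popescu (Theorem~\ref{t:popescu}); this is precisely the issue the paper flags, and Gabrielov's example shows that an analogous nested Artin statement is false over convergent (non-algebraic) coefficient rings. Second, your proposed polynomial system is built from the complexified defining equation $\rho'(B,\bar B,\varphi,\bar\varphi)=a\rho$, which lives over $\C^{2(n+k)}$ and involves $\bar B$ as a separate unknown tied to $B$ by conjugation of power-series coefficients. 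That tie is not expressible as a polynomial relation: treat $B$ and $\bar B$ as independent and the approximants lose reality; pass to real and imaginary parts on $\R^{2n+k}$ and the approximants lose holomorphy in $z$ (the Cauchy--Riemann constraint is not polynomial). The paper avoids this by first proving a much more delicate reflection-type statement --- that each component of $h$ is algebraic over the ring generated by $Z$ and a \emph{finite jet} $\6H(u)=(\partial^\alpha h(0,u))_{|\alpha|\le m_0}$ (Propositions~\ref{p:mircag} and~\ref{p:keyalg}), a system over $\C^N$ in $Z$ alone --- and then supplements it with the auxiliary real-algebraic system $\Theta^j_{\gamma,\nu}(u,\Lambda,\bar\Lambda)=0$ of Lemma~\ref{l:crucial} so that Lemma~\ref{l:final} can guarantee $h^\ell(M)\subset M'$. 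Your assertion that $B$ becomes algebraic over $z,s,\varphi,\bar\varphi$ is also stronger than what is true: the correct auxiliary data is the full $m_0$-jet $\6H(u)$, not just $\varphi$. Finally, your appeal to the jet-prolongation argument of \cite{BER96} to obtain this parameter-dependence is, as the authors remark explicitly, insufficient here; the paper needs the iterated-Segre ``step-down'' machinery of Section~\ref{s:algprop} instead, and holomorphic nondegeneracy enters via Proposition~\ref{p:mircag} rather than in the way you envisioned.
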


Under the stronger hypothesis 
that $(M_0,p)$ is "finitely nondegenerate",  Theorem~\ref{thm:main2app} was proved by 
Baouendi, Rothschild, and Zaitsev (see \cite{BRZ2}). However, the weakening of the nondegeneracy 
assumption makes  it impossible to use their methods. 

Let us restate our results in a more geometric fashion. For this, we use the following
notation.
We say that two germs 
of real-algebraic CR submanifolds $(M,p)$ and $(M',p')$ of $\C^N$ are biholomorphically equivalent, and 
write $(M,p)\sim_h (M',p')$ if there exists a germ of a biholomorphism $H\colon (\CN,p) \to (\CN,p')$ 
and a neighbourhood $U$ of $p$ in $\C^N$ such that $H(M\cap U) \subset M'$ (we shall abbreviate this by writing $H(M)\subset M'$). We say that $(M,p)$ and $(M',p')$ are 
algebraically equivalent, and write $(M,p)\sim_a (M',p')$, if there
exists such a biholomorphism which is furthermore algebraic. 

Let us recall that if $M\subset \CN$ is a real-algebraic CR submanifold, for every $q\in M$ there exists a unique germ of a real-algebraic submanifold $\mathcal{W}_q$ through $q$ with 
the property that every (small) piecewise differentiable curve starting at $q$, whose tangent vectors are 
in the complex tangent space, has its image contained in $\mathcal{W}_q$ (see \cite{BER96}). $\mathcal{W}_q$ is referred to 
as the {\em local CR orbit} of $q$. We shall say that $(M,p)$ has {\em constant orbit dimension} if  $\dim \mathcal{W}_q$ is 
constant for $q$ close by $p$. The geometric counterpart
of Theorem~\ref{thm:main2} can now be stated as follows. 

\begin{Thm}\label{thm:main3} Let $(M,p)$ be a germ of a holomorphically nondegenerate real-algebraic CR 
	submanifold, which is in addition of constant orbit dimension. Assume $(M',p')$ is a germ of a real-algebraic 
	submanifold of $\CN$ for which $(M,p)\sim_h (M',p')$. Then $(M,p)\sim_a (M',p')$.
\end{Thm}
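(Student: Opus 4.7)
The plan is to reduce Theorem~\ref{thm:main3} to Theorem~\ref{thm:main2app} by realizing the CR orbit foliation of $M$ near $p$ as an algebraic deformation of the orbit $\mathcal{W}_p$. First, using the constant orbit dimension hypothesis, I would note that the orbits $\mathcal{W}_q$ form a real-algebraic foliation of $M$ near $p$, and fix a real-algebraic transversal $T\subset M$ of real dimension $k:=\dim M-\dim\mathcal{W}_p$ through $p$, together with an algebraic parametrization $q\colon(\R^k,0)\to(T,p)$. I would then set
\[
M_s:=\mathcal{W}_{q(s)}-q(s)+p,
\]
the orbit through $q(s)$ translated back to pass through $p$, producing an algebraic deformation $(M_s,p)_{s\in\R^k}$ of $\mathcal{W}_p=M_0$ each fibre of which is minimal (being a CR orbit), and whose holomorphic nondegeneracy follows, in the appropriate sense, from the constant orbit dimension hypothesis and the holomorphic nondegeneracy of $M$. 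Next I would perform the analogous construction on the target side with algebraic data $T'\subset M'$, $q'\colon(\R^k,0)\to(T',p')$, obtaining $(N_t,p')_{t\in\R^k}$. Since $H$ sends CR orbits into CR orbits, it produces a real-analytic diffeomorphism $\varphi$ defined by $H(\mathcal{W}_{q(s)})=\mathcal{W}'_{q'(\varphi(s))}$ together with the holomorphic submersion
\[
B(z,s):=H(z+q(s)-p)-q'(\varphi(s))+p',
\]
and $(B,\varphi)$ is readily seen to be a biholomorphism between the two deformations in the sense of Section~\ref{int}.

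I would then apply Theorem~\ref{thm:main2app} to obtain, for any prescribed $\ell\geq 1$, an algebraic biholomorphism $(B^\ell,\varphi^\ell)$ of these two deformations which agrees with $(B,\varphi)$ to order $\ell$ at $(p,0)$. The final step is to reassemble this algebraic family of maps into a single algebraic biholomorphism $\widetilde H\colon(\Cn,p)\to(\Cn,p')$ with $\widetilde H(M)\subset M'$. For this I would invoke the fact that, under the constant orbit dimension assumption, the real-algebraic orbit foliation of $M$ is the trace on $M$ of a complex-algebraic foliation of a neighbourhood of $p$ in $\Cn$; in particular the real-algebraic foliation projection $M\to T$ extends to an algebraic holomorphic submersion $\widehat\pi\colon(\Cn,p)\to(\C^k,0)$. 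I would then define
\[
\widetilde H(z):=B^\ell\bigl(z-q(\widehat\pi(z))+p,\,\widehat\pi(z)\bigr)+q'\bigl(\varphi^\ell(\widehat\pi(z))\bigr)-p',
\]
an algebraic map, and observe that for any $z\in\mathcal{W}_{q(s)}\subset M$ with $s=\widehat\pi(z)$ one has $z-q(s)+p\in M_s$, whence $B^\ell(z-q(s)+p,s)\in N_{\varphi^\ell(s)}$ and therefore $\widetilde H(z)\in\mathcal{W}'_{q'(\varphi^\ell(s))}\subset M'$. A direct computation at $z=p$, using $\widehat\pi(p)=0$, $q(0)=p$, $q'(0)=p'$, $\varphi^\ell(0)=0$ and the agreement of $B^\ell$ with $B$ through order $\ell\geq 1$, yields $d\widetilde H(p)=dH(p)$, so that $\widetilde H$ is automatically a local biholomorphism delivering the desired algebraic equivalence.

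The hard part will be precisely this reassembly step, namely the extension of the orbit foliation of $M$ to a complex-algebraic foliation of $\Cn$. The constant orbit dimension hypothesis is essential here: without it the ``projection to the transversal'' $z\mapsto s(z)$ would only be real-algebraic, and the reassembled map $\widetilde H$ would fail to be holomorphic in $z$. Some additional care is also needed to interpret the holomorphic nondegeneracy required of the fibres $M_s$ (which are generally non-generic in $\Cn$) in a manner compatible with the hypotheses of Theorem~\ref{thm:main2app}, presumably by relating the fibres to their intrinsic complexifications (which, in the constant orbit dimension case, themselves depend complex-algebraically on the parameter $s$).
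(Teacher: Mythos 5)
Your proposal takes the reverse direction of the paper's own logic. The paper proves the single approximation theorem, Theorem~\ref{t:technic}, and then observes that Theorem~\ref{t:main4}, hence Theorem~\ref{thm:main3}, follows immediately (take any $\ell\geq 1$ to obtain an algebraic biholomorphism sending $M$ to $M'$); the deformation formulations \ref{thm:main2app} and \ref{thm:main2} also follow directly from \ref{t:technic} via the correspondence between deformations and constant-orbit-dimension manifolds. You instead try to derive Theorem~\ref{thm:main3} \emph{from} Theorem~\ref{thm:main2app}, which requires the implication ``$(M,p)$ holomorphically nondegenerate of constant orbit dimension $\Rightarrow$ the CR orbit $\mathcal{W}_p$, viewed inside its intrinsic complexification, is holomorphically nondegenerate.'' That implication is false, and this is a genuine gap.

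A concrete counterexample: in $\C^4_{(z,v,w,u)}$ take
\[
M=\bigl\{\,\im w=|z|^2+(\re u)\,|v|^2,\ \im u=0\,\bigr\}.
\]
Then $M$ is a generic real-algebraic CR submanifold of CR dimension $2$ and codimension $2$. A direct check using the criterion of Lemma~\ref{l:stanton} shows any holomorphic vector field $X=a\,\partial_z+b\,\partial_v+c\,\partial_w+e\,\partial_u$ tangent to $M$ must satisfy $e\equiv0$, then $c\equiv0$, $a\equiv0$, and $ub\equiv0$ on the complexification, so $X\equiv0$; thus $M$ is holomorphically nondegenerate. The distribution generated by the CR vector fields and their conjugates never contains the $\partial_u$-directions, so all CR orbits have (real) dimension $5$ and $M$ is of constant orbit dimension. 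However, the CR orbit through the origin is $\mathcal{W}_0=\{\im w=|z|^2,\ u=0\}$, which in its intrinsic complexification $\{u=0\}\cong\C^3$ is the Lewy hypersurface of $\C^2$ crossed with $\C_v$; the vector field $\partial_v$ is tangent to it, so $\mathcal{W}_0$ is holomorphically \emph{degenerate}. Hence the hypothesis of Theorem~\ref{thm:main2app} is not met, and your reduction breaks down at the step ``whose holomorphic nondegeneracy follows, in the appropriate sense, from $\ldots$''. (The subsidiary concern you flagged at the end about interpreting nondegeneracy of non-generic fibres is real; but even after passing to the intrinsic complexification, as in Lemma~\ref{l:coordconstantorbit}, the fibre can be degenerate.) The paper's own route avoids this entirely: it keeps the nondegeneracy hypothesis on the ambient $M$ and applies Theorem~\ref{t:technic} directly.

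Two secondary remarks. First, the ``hard part'' you isolate -- extending the foliation projection $M\to T$ to a holomorphic algebraic submersion $\widehat\pi$ -- is precisely what Lemma~\ref{l:coordconstantorbit} provides via normal coordinates $(z,w,u)$, with $\widehat\pi(z,w,u)=u$; the translation construction $M_s:=\mathcal{W}_{q(s)}-q(s)+p$ only lands the fibres in a fixed copy of $\C^{N-c}$ once you work in those coordinates, and you should appeal to the lemma rather than an ad hoc transversal. Second, the paper's parenthetical claim that Theorems~\ref{thm:main2app} and~\ref{t:main4} are ``equivalent'' should be read with care: the implication $M_0$ h.n.d.\ $\Rightarrow$ $M$ h.n.d.\ does hold (by upper semicontinuity of $\lambda_{M_s}$ in $s$), so \ref{thm:main2app} follows from \ref{t:main4}, but the converse implication at the level of hypotheses fails, as the example above shows; both theorems really derive, independently, from Theorem~\ref{t:technic}.
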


Also Theorem~\ref{thm:main3} is a consequence of an approximation theorem, which can be stated as follows. 

\begin{Thm}\label{t:main4} Let $(M,p)\subset \C^N$ be a germ of a real-algebraic CR submanifold which is 
	holomorphically nondegenerate and of constant orbit dimension. 
	Then for every real-algebraic CR submanifold $M'\subset \C^N$ and every
	positive integer $\ell$, if $h\colon (\C^N,p)\to \C^N$ is the germ of a biholomorphic map satisfying $h(M)\subset M'$,
	there exists an algebraic biholomorphism $h^\ell \colon (\C^N,p)\to \C^N$ satisfying $h^\ell(M)\subset M'$ which agrees with $h$ up to order $\ell$ at $p$. 
\end{Thm}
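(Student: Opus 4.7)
My approach is to reduce Theorem~\ref{t:main4} to the deformation approximation statement Theorem~\ref{thm:main2app} by decomposing $(M,p)$ into its family of CR orbits. Since $(M,p)$ has constant orbit dimension, I expect the CR orbits $\{\mathcal{W}_q\}$ to fit together into a real-algebraic foliation of $M$ near $p$, and their intrinsic complexifications to form a holomorphic algebraic foliation of a neighborhood of $p$ in $\C^N$. The first step is to produce algebraic holomorphic coordinates $(z,u)\in\C^n\times\C^{N-n}$ centered at $p$ in which the leaves of this complex foliation are the level sets $\{u=\mathrm{const}\}$. In these coordinates $M$ presents as a real-algebraic family $(M_s,0)_{s\in\R^k}$, with $k$ the real codimension of the orbits in $M$, and with each $M_s\subset\C^n$ being the CR orbit through the basepoint of the leaf corresponding to $s$, viewed as a generic CR submanifold of that leaf.

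Next I would verify that $(M_s,0)_{s\in\R^k}$ satisfies the hypotheses of Theorem~\ref{thm:main2app}: each $M_s$ is minimal by construction (it is a CR orbit), and each $M_s$ must be holomorphically nondegenerate in $\C^n$. The latter is the principal obstacle of the argument. The idea is that if some fiber admitted a nontrivial holomorphic tangent vector field on $\C^n$, then by analytic continuation along the algebraic parameter and extension to the ambient coordinates one could manufacture a nontrivial holomorphic vector field on $\C^N$ tangent to $M$, contradicting the holomorphic nondegeneracy of $M$. This is where the algebraicity of the family and the constant orbit dimension enter in an essential way.

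Having secured the deformation setup, I would show that the biholomorphism $h$ induces a biholomorphism $(B,\varphi)$ of deformations. Since biholomorphisms map CR orbits to CR orbits, $h$ sends leaves of the foliation near $p$ to leaves of the analogous foliation near $p'$. After choosing corresponding trivializing coordinates on the target, $h$ takes the form $h(z,u)=(B(z,u),\Phi(u))$ for some holomorphic $B,\Phi$, and the induced map on the real parameter sets defines the real-analytic diffeomorphism $\varphi$. The pair $(B,\varphi)$ is then a biholomorphism between the algebraic deformations $(M_s,0)_{s\in\R^k}$ and $(N_t,0)_{t\in\R^k}$, where $(N_t)_{t\in\R^k}$ is the analogous orbit decomposition of $(M',p')$.

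Finally, I would apply Theorem~\ref{thm:main2app} to $(B,\varphi)$ to obtain, for every integer $\ell$, an algebraic biholomorphism $(B^\ell,\varphi^\ell)$ between the two deformations agreeing with $(B,\varphi)$ to order $\ell$ at the origin. Repackaging $(B^\ell,\varphi^\ell)$ back into the ambient algebraic coordinates $(z,u)$ on $\C^N$ then yields the desired algebraic biholomorphism $h^\ell\colon(\C^N,p)\to\C^N$ with $h^\ell(M)\subset M'$ agreeing with $h$ to order $\ell$ at $p$.
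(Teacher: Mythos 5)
Your proposal does not prove the theorem; it merely restates it. The reduction you describe (writing $M$ in adapted coordinates as an algebraic deformation $(M_s,0)_{s\in\R^k}$ by CR orbits, observing that biholomorphisms respect the orbit foliation, and appealing to Theorem~\ref{thm:main2app}) is precisely the equivalence that the paper itself records in the introduction: ``the statements given by Theorem~\ref{thm:main2app} and Theorem~\ref{t:main4} are equivalent.'' In the paper's logical architecture, both of these are derived as consequences of the genuinely substantive approximation result, Theorem~\ref{t:technic}. Your argument takes Theorem~\ref{thm:main2app} as a given, but the paper never proves Theorem~\ref{thm:main2app} independently of Theorem~\ref{t:main4} --- they are two formulations of the same content. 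You have therefore moved the entire burden of the proof onto a statement you have not established.

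The actual mathematical work is elsewhere and is not touched by your outline: it lies in proving that the components of $h$ are algebraic over the quotient field of the partially algebraic ring $\6A^h$ (Proposition~\ref{p:keyalg}, which itself rests on iterated Segre mappings, the finite-type rank criterion, and the reflection identities from \cite{MirCAG}), in constructing the auxiliary polynomial system \eqref{e:system} over $\R^k$ so that the jet $\6H(u)$ depends only on the transverse parameter $u$, in invoking Popescu's nested approximation theorem rather than plain Artin approximation (the ``subring condition'' is essential here, as Gabrielov's example shows), and finally in the contradiction argument of Lemma~\ref{l:final} showing that the resulting algebraic approximants actually map $M$ into $M'$. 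None of these appear in your proposal.

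A secondary remark: in the step where you verify the hypotheses of Theorem~\ref{thm:main2app}, you set out to prove that \emph{every} fiber $M_s$ is holomorphically nondegenerate. The hypothesis required there is only that the central fiber $M_0=\mathcal{W}_p$ be holomorphically nondegenerate. This is indeed a fact needing justification (and your heuristic---extending a tangent holomorphic vector field on a single leaf to one tangent to all of $M$---is roughly the right direction, along the lines of the arguments in Section~\ref{s:foliation}), but it is a much weaker claim and your formulation overreaches. Even so, it does not save the proposal: without an independent proof of Theorem~\ref{thm:main2app} (i.e., without the content of Theorem~\ref{t:technic}), the argument is circular.
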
 

Let us briefly recall why 
a CR submanifold of constant orbit dimension is a deformation of its 
CR orbits: for a germ of a real-algebraic CR manifold $(M,p)$ which is of constant orbit dimension, there
exists an integer $k\in \{0,\ldots,N\}$ and a real-algebraic submersion $S \colon (M,p) \to (\R^k,0)$ such 
that $S^{-1} (S(q)) = \mathcal{W}_q =: M_{S(q)}$ for all $q\in M$ near $p$.  
The level sets of $S$ therefore foliate $M$ by {\em minimal} real-algebraic CR submanifolds, 
and thus $M$ is an algebraic deformation of $M_0$ (see \cite{BRZ2} or Lemma~\ref{l:coordconstantorbit}). In addition, a local biholomorphism sending two such real-algebraic CR submanifolds is also a biholomorphism of the associated deformations, since CR orbits of the source manifold are mapped to CR orbits of the target manifold (see e.g.\ \cite{BERbook}). On the other hand, given an algebraic deformation 
$(M_s)_{s\in\R^k}$ of a minimal, real-algebraic CR submanifold $(M_0,0)\subset (\Cn,0) $, 
the manifold $(M,0)\subset (\C^{n+k},0)$ defined by $ M = \{ (z,w) \in (\C^{n+k},0)\colon z\in M_{\re w}, \im w = 0\}$ is 
a real-algebraic CR submanifold of constant orbit dimension. Hence the statements given by Theorem~\ref{thm:main2app} and Theorem~\ref{t:main4} are equivalent.

If one removes the assumption about 
holomorphic nondegeneracy in Theorem~\ref{t:main4}, we can still show that the conclusion of the theorem holds if we assume a certain ``mild'' form of holomorphic degeneracy. We shall say that a point $p$ in a real-analytic CR submanifold $M\subset \C^N$ is a {\em regular point of the holomorphic foliation on $M$} if there exists an integer $k\in \{0,\ldots,N-1\}$ and a holomorphically nondegenerate real-analytic CR submanifold $\widehat M \subset \C^{N-k}$ such that $(M,p)\sim_h (\widehat M \times \C^k,0)$. (This notion is motivated by the structure of the holomorphic foliation arising in holomorphically degenerate CR submanifolds, that is discussed in detail in section~\ref{s:foliation}). We have the following result: 



\begin{Thm}\label{t:main2holfol} Let $M\subset \C^N$ be a connected real-algebraic CR submanifold. Then the following holds:

\begin{enumerate}
\item [(i)] the set of all points $p\in M$ such that $p$ is a regular point of the holomorphic foliation on $M$ and $(M,p)$ is of constant orbit dimension is the complement of a closed proper real-algebraic subvariety $\Sigma_M$ of $M$.
\item[(ii)] For every point $p\in M\setminus \Sigma_M$, for every real-algebraic CR submanifold $M'\subset \C^N$ and for every positive integer $\ell$, if  $h\colon (M,p)\to M'$ is the germ of a biholomorphic map satisfying $h(M)\subset M'$,
	there exists an algebraic biholomorphism $h^\ell \colon (M,p)\to M'$ satisfying $h^\ell (M)\subset M'$ which agrees with $h$ up to order $\ell$ at $p$. 
\end{enumerate}
\end{Thm}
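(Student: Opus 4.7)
My approach is to use algebraic straightening of the holomorphic foliation at a regular point to reduce to the situation already handled by Theorem~\ref{t:main4}.

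For part~(i), both defining conditions of $M\setminus \Sigma_M$ are algebraic. The local CR orbit dimension at $q$ equals the dimension at $q$ of the distribution generated by iterated Lie brackets of CR and anti-CR vector fields of $M$; since $M$ is real-algebraic, these brackets are real-algebraic, so the rank drops on a proper real-algebraic subvariety $\Sigma_1\subset M$. Similarly, the holomorphic vector fields tangent to $M$ can be chosen real-algebraic and form a coherent sheaf of generic rank equal to the holomorphic degeneracy $k$ of $M$; the set $\Sigma_2\subset M$ where the rank drops is a proper real-algebraic subvariety. At $p\in M\setminus (\Sigma_1\cup \Sigma_2)$, the $k$ linearly independent algebraic holomorphic vector fields tangent to $M$ integrate to a product decomposition $(M,p)\sim_h (\hat M \times \C^k, 0)$ with $\hat M$ holomorphically nondegenerate (using the structure of the holomorphic foliation from section~\ref{s:foliation}). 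Set $\Sigma_M := \Sigma_1\cup \Sigma_2$.

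For part~(ii), fix $p\in M\setminus \Sigma_M$. The crucial step is to produce an \emph{algebraic} local biholomorphism $\Phi\colon (\C^N,p)\to (\C^N,0)$ with $\Phi(M) = \hat M \times \C^k$, where $\hat M\subset \C^{N-k}$ is real-algebraic, holomorphically nondegenerate and of constant orbit dimension (the latter inherited because CR orbits of $\hat M\times \C^k$ factor as $\mathcal{W}\times \C^k$). The construction uses $k$ algebraic holomorphic vector fields $X_1,\ldots,X_k$ tangent to $M$ near $p$ and linearly independent at $p$, combined with $N-k$ algebraic first integrals $f_1,\ldots,f_{N-k}$ satisfying $X_j f_i = 0$ and with $df_i$ independent at $p$; elimination theory applied to the algebraic involutive distribution generated by the $X_j$ furnishes such algebraic first integrals. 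Since holomorphic degeneracy and regularity of the foliation are biholomorphic invariants, the germ $(M',p')$ has the same structure, and an analogous construction gives an algebraic $\Phi'\colon (\C^N,p')\to (\C^N,0)$ with $\Phi'(M') = \hat M'\times \C^k$ for some real-algebraic $\hat M'\subset \C^{N-k}$.

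Set $\tilde h := \Phi' \circ h \circ \Phi^{-1}$, a biholomorphism $(\hat M \times \C^k, 0)\to (\hat M'\times \C^k, 0)$. Holomorphic nondegeneracy of $\hat M$ forces the holomorphic vector fields tangent to $\hat M\times \C^k$ to be combinations of $\partial/\partial w_j$: any $z$-component $a(z,w)\partial/\partial z$ tangent to $\hat M\times \C^k$ gives, for each fixed $w$, a holomorphic vector field tangent to $\hat M$, hence zero. The same is true for the target. Since $\tilde h_*$ maps tangent holomorphic vector fields to tangent holomorphic vector fields, the $z'$-components of $\tilde h_*(\partial/\partial w_j)$ must vanish, forcing $\tilde h(z,w) = (Z(z), W(z,w))$ for some biholomorphism $Z\colon (\hat M, 0)\to (\hat M',0)$ and some holomorphic $W$ with $\partial_w W(0)$ invertible. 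Applying Theorem~\ref{t:main4} to $\hat M$ produces, for each $\ell$, an algebraic biholomorphism $Z^\ell$ agreeing with $Z$ to order $\ell$ at $0$ with $Z^\ell(\hat M)\subset \hat M'$. Taking $W^\ell$ to be a polynomial approximation of $W$ of order $\ell$, the map $(z,w)\mapsto (Z^\ell(z),W^\ell(z,w))$ is an algebraic biholomorphism sending $\hat M\times \C^k$ into $\hat M'\times \C^k$, and conjugating back by $\Phi,\Phi'$ yields the required $h^\ell$.

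The main obstacle is the algebraic straightening step: the biholomorphic product decomposition is built into the definition of a regular point, but producing an \emph{algebraic} one uses in an essential way that $M$ is real-algebraic and depends on the algebraic integrability of the algebraic involutive distribution spanned by the $X_j$'s. Once the algebraic $\Phi,\Phi'$ are in hand, the factorization of $\tilde h$ and the subsequent approximation of its two components (Theorem~\ref{t:main4} for $Z$, Taylor expansion for $W$) are routine.
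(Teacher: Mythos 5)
Your argument follows the paper's strategy closely: define $\Sigma_M$ as the union of the non-constant-orbit-dimension locus and the singular locus of the holomorphic foliation (both algebraic, the second by Proposition~\ref{p:algstraight}); reduce by an algebraic straightening to $\hat M\times\C^k$ and $\hat M'\times\C^k$ with $\hat M,\hat M'$ holomorphically nondegenerate; factor the map as $(Z(z),W(z,w))$ using the pushforward of $\partial/\partial w_j$ and holomorphic nondegeneracy of the target; apply Theorem~\ref{t:main4}/\ref{t:technic} to $Z$ and a Taylor truncation to $W$. The only genuine divergence from the paper is how you obtain the algebraic straightening of $M'$: you re-run the straightening intrinsically at $p'$ (noting regularity is biholomorphically invariant), whereas the paper transfers it from $M$ via Lemma~\ref{l:observe}. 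Both routes are valid and amount to the same content.

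One caution: the phrase ``elimination theory applied to the algebraic involutive distribution furnishes algebraic first integrals'' should not be taken as a general fact. Algebraic Frobenius is \emph{false} for arbitrary algebraic involutive distributions — e.g.\ $X=x\,\partial_x+\alpha y\,\partial_y$ with $\alpha$ irrational has no algebraic first integral. What actually makes the straightening algebraic here is the CR structure: the functions $\bar\Theta_\beta$ are algebraic, they are first integrals of any tangent holomorphic vector field (identity \eqref{e:trick}), and Lemma~\ref{l:observe}/Proposition~\ref{p:algstraight} then produce the algebraic normal form by Artin approximation applied to the flow equations. Since you cite the results of Section~\ref{s:foliation} for this step, the proof stands; but ``elimination theory'' misdescribes the mechanism and, read literally as a self-contained claim, would be a gap.
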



%
%

The proof of Theorem~\ref{t:main4}  
is based on a careful study of some algebraic properties of local biholomorphic mappings
sending (nowhere minimal) real-algebraic CR submanifolds into each other. Given $(M,p)$ as in Theorem~\ref{t:main4}, we may assume without loss of generality that $p=0$ and that $M$ is generic in $\C^N$. As already explained, $M$ can be viewed as a deformation of its CR orbits and therefore we identify $M$ with a deformation $(M_s)_{s\in \R^k}$ of a certain minimal holomorphically nondegenerate real-algebraic generic submanifold $M_0\subset \C_z^{N-k}$ passing through $0$. The first step of the proof is to determine the dependence of a given biholomorphic map  $h=h(z,u)$ with respect to the "parameter" $u\in \C^k$. Indeed, by the algebraicity theorem proved in \cite{BER96}, one already knows  that for every fixed $s\in \R^k$ small enough, the map $z\mapsto h(z,s)$ is algebraic. We prove that there exists an integer $m_0$ such the holomorphic map $(z,u)\mapsto h(z,u)$ depends algebraically on  the functions $z,u$ and $\6H(u):=((\partial^{\gamma}h)(0,u);|\gamma|\leq m_0)$. This kind of parameter dependence result can not be obtained by using the techniques of \cite{BER96, BRZ2}. It is obtained as a combination of some previous results of the second author \cite{Mir98, MirCAG} and a key algebraic property, Proposition~\ref{p:keyalg}, proved in section~\ref{s:algprop}.


Once Proposition~\ref{p:mircag} is combined with Proposition~\ref{p:keyalg}, one obtains a system of (holomorphic) polynomial equations over $\C_z^{N-k}\times \C^k_u$ satisfied by the mappings $h$ and $\6H$. At this point one could apply Artin's approximation theorem  \cite{A69} to 
approximate $(h,\6H)$ by algebraic mappings. However, the sequence of algebraic mappings approximating $h$ need not send $M$ to $M'$. This problem is overcome by constructing another polynomial system of real-algebraic equations over $\R^k$ fulfilled by the mapping $\6H$. An application of a more refined version of Artin's approximation theorem due to Popescu \cite{P} to the new polynomial system coupled with the first one (more precisely to its restriction to $\R^N)$ provides the desired conclusion.


To derive Theorem~\ref{t:main2holfol}, one needs to study holomorphically degenerate real-analytic CR manifolds and understand the structure of the holomorphic foliation (with singularities) arising from the existence of holomorphic vector fields tangent to them. This is done in section~\ref{s:foliation} where we, in addition, show that if the manifolds are algebraic, the holomorphic foliation is algebraic.

The paper is organized as follows. Section~\ref{s:algprop} is
devoted to the proof of an algebraic property for certain holomorphic mappings whose restriction on a nowhere minimal
real-algebraic CR manifold satisfy some special type of polynomial identity. In section~\ref{s:proofmt}, we prove the main
approximation result of the paper, Theorem~\ref{t:technic}; Theorems~\ref{thm:main2},\ref{thm:main2app},\ref{thm:main3},
and \ref{t:main4} are
direct consequences of this result. In Section~\ref{s:foliation}, we recall several basic facts about the structure of the
holomorphic foliation (with singularities) on a (holomorphically degenerate) real-analytic CR submanifold $M\subset \C^N$. If $M$ is 
real-algebraic and connected, we show that this foliation is algebraic; in particular, 
the singular locus of this foliation, which coincides with the complement of the set of regular points of the foliation, is a closed proper real-algebraic subvariety of $M$. We then deduce in section~\ref{s:last} Theorem~\ref{t:main2holfol} from Theorem~\ref{t:technic} and the results of section~\ref{s:foliation}. The basic background on CR analysis needed throughout the paper may be found e.g.\ in the books
\cite{Bog, BERbook}.

\subsection*{Acknowledgments:} The authors would like to thank an anonymous referee for 
his helpful insights and comments, which helped us to essentially 
improve an earlier version of this
paper.

\section{Preliminaries and Notation}

We start by recalling some basic facts and introducing our notation. 
\subsection{Algebraic functions and mappings} Throughout the paper,  $\C\{x\}$ denotes the ring of convergent
power series with complex coefficients in the variables $x=(x_1,\ldots,x_r)$, $r\geq 1$. The ring $\C\{x\}$ can
be identified with the ring of germs of holomorphic functions at $0$ in $\C^r$, and we 
shall do so freely. Given any convergent power series
$\eta=\eta (x)\in \C\{x\}$, we denote by $\bar{\eta}=\bar{\eta}(x)$ the convergent power series obtained from $\eta$
by taking complex conjugates of its coefficients.

An element $f\in \C\{x\}$ is  {\em algebraic} (or  {\em Nash}) if it satisfies a nontrivial polynomial identity with
polynomial coefficients, i.e. if it is algebraic over the subring $\C[x]\subset \C\{x\}$. 
We  denote the subring of $\C \{x\}$ of all algebraic power
series by $\6N \{x\}$. To be completely explicit, this means that 
$f\in\6N \{ x\}$ if $f \in \C\{x\}$ and there
exist polynomials $p_j \in \C[x]$ for $j=0,\dots,m$ with
$p_m \neq 0$ such that \[ \sum_{j=0}^m p_j (x) f(x)^j = 0. \]
Given nonnegative integers $k$ and $s$, a germ of a holomorphic map $(\C^k,0)\to \C^s$ is 
algebraic if all of its components are algebraic.
We also have to consider  the ring of germs at $0$ in $\R^r$ of (complex-valued)
real-algebraic functions. This ring will be denoted by $\6N^{\R}\{x\}$ and coincides with the ring of germs at $0$ in
$\R^r$ of (complex-valued) real-analytic functions whose complexification belongs to $\6N\{x\}$.

\subsection{CR orbits, normal coordinates and iterated Segre mappings}\label{ss:crorbit}

Let $M\subset \C^N$ be a real-analytic CR submanifold and $T^{0,1}M$ its CR bundle. We denote by 
${\mathcal G}_M$ the Lie algebra  generated by the sections of $T^{0,1}M$ and its conjugate $T^{1,0}M$, 
and by 
${\mathcal G}_M (p) \subset \C T_p M$ the space of the evaluations at $p$ of these sections. 
By a theorem of Nagano (see e.g.\ \cite{BERbook, BCH08}), for every point $p\in M$, there is a well-defined unique germ at $p$ of a real-analytic submanifold $\mathcal{W}_p$ satisfying $\C T_q\mathcal{W}_p={\mathcal G}_M(q)$ for all $q\in V_p$. This unique submanifold is necessarily CR and called the {\em local CR orbit} of $M$ at $p$. Note that this definition coincides with the one given in the introduction using piecewise differentiable curves running in complex tangential directions (see e.g.\ \cite{BERbook}). Since $M$ is real-analytic (resp.\  real-algebraic), it is easy to see that if $M$ is connected, the dimension of the local CR orbits is constant except possibly on a closed proper real-analytic (resp.\ real-algebraic) subvariety $\Sigma_M^1$ of $M$ (see e.g.\ \cite{BRZ1, BRZ2}). If ${\rm dim}\, \mathcal{W}_p={\rm dim}\, M$ for some point 
$p\in M$, we say that $M$ is  {\em minimal} (or also of {\em finite type}) at $p$.

Let $M$ be a generic real-algebraic submanifold of $\C^N$ of CR dimension $n$ and codimension $d$. 
We recall that a point $p\in M$ is of constant orbit dimension if it has an
open neighbhorhood $U$ in $M$ such that $\dim \mathcal{W}_q$ is constant for $q\in U$. In this case, 
one may describe the obtained algebraic
foliation by CR orbits in terms of normal coordinates as follows. First recall that
coordinates $Z = (z,\eta)\in\Cn\times\Cd$ are {\em normal coordinates} for $(M,p)$ if $p=0$ and there exists a map 
$\Theta(z,\chi,\eta)$ defined in a neighbourhood of $0 \in \C^{2n +d}$ and satisfying the {\em normality conditions}
\begin{equation}\label{e:normalnormal} 
	\Theta(z,0 ,\eta) = \Theta(0,\chi,\eta) = \eta,  
	\end{equation}
	such that  $M$ is given by 
$\eta = \Theta(z,\bar z , \bar \eta)$. The map $\Theta$ also satisfies the {\em reality condition}
\begin{equation}
	\label{e:realnormal} \Theta (z,\chi,\bar \Theta(\chi,z,\eta)) = \eta.
\end{equation} 

\begin{Lem}\label{l:coordconstantorbit} $($\cite[Proposition 3.4]{BRZ1}\, {\rm and}\, \cite[Lemma 3.4.1]{BER96}$)$ 
	Let $(M,p)\subset \C^N$ be a germ of a  generic real-algebraic submanifold which is of constant orbit dimension at $p$, 
	and denote by $c\in \{0,\ldots, d\}$  the codimension of the CR orbits close by $p$ in $M$. 
	Then there exist normal algebraic coordinates 
	$Z=(z,\eta)\in \C^n \times \C^d$, $Z=(z,w,u)\in \C^n \times \C^{d-c}\times \C^c$, 
	such that $M$ is given near the origin by an equation of the form
\begin{equation}\label{e:conormal}
\eta=(w,u)=\Theta (z,\bar z,\bar \eta):=(Q(z,\bar z, \bar w,\bar u),\bar u), 
\end{equation}
where $Q(z,\chi,\tau,u)$ is a $\C^{d-c}$-valued algebraic map near $0\in \C^{n+N}$. Furthermore, there exist 
neighborhoods $U,V$ of the origin in $\R^c$ and $\C^{N-c}$ respectively such that for every $u\in U$, the real-algebraic 
submanifold given by 
\begin{equation}\label{e:Mu}
M_u:=\{(z,w)\in V: w=Q(z,\bar z, \bar w,u)\}
\end{equation}
is generic and minimal at $0$.
\end{Lem}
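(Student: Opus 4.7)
The plan is to straighten the foliation of $(M,p)$ by CR orbits using $c$ algebraic holomorphic coordinates whose real parts serve as parameters of the orbit foliation, and then bring the remaining $(n+d-c)$ coordinates into normal form while preserving algebraicity.

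First, since $(M,p)$ has constant orbit dimension with orbits of codimension $c$ in $M$, the Nagano distribution $q\mapsto\mathcal G_M(q)$ is a real-algebraic subbundle of $\C TM$ of constant rank whose integral leaves are the local CR orbits; in particular, these orbits fit into a real-algebraic foliation of a neighbourhood of $p$ in $M$. From this foliation I obtain a real-algebraic submersion $S=(\rho_1,\dots,\rho_c)\colon(M,p)\to(\R^c,0)$ whose fibres are the orbits. Each $\rho_j$ is real-valued and annihilated by every section of $T^{0,1}M\subset\mathcal G_M$, hence is a real-valued real-analytic CR function on the generic real-algebraic submanifold $M$. By a standard application of CR extension in the real-algebraic setting, each $\rho_j$ extends to a holomorphic algebraic function $u_j$ in a neighbourhood of $p$ in $\C^N$ with $u_j|_M=\rho_j\in\R$ and with $du_1,\dots,du_c$ linearly independent at $p$.

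Second, I complete $(u_1,\dots,u_c)$ to a system of algebraic holomorphic coordinates $Z=(z,w,u)\in\C^n\times\C^{d-c}\times\C^c$ on $(\C^N,p)$. Applying the usual algorithm for putting a generic real-analytic submanifold into normal form (see \cite{BERbook}) to the $(z,w)$-directions while treating $u$ as an algebraic parameter, the algebraic implicit function theorem yields algebraic defining equations $w=Q(z,\bar z,\bar w,\bar u)$ satisfying the normality conditions \eqref{e:normalnormal} with respect to the variables $(z,w)$. Since $u|_M\in\R^c$, the remaining $c$ defining equations are simply $u=\bar u$, so the combined map $\Theta=(Q,\bar u)$ realises the asserted normal form, and the reality identity \eqref{e:realnormal} follows automatically from the structure of normal coordinates.

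Finally, for small $u\in U\subset\R^c$, the slice $M_u=\{(z,w)\in V: w=Q(z,\bar z,\bar w,u)\}$ coincides, in the new coordinates, with the level set $\{\rho=u\}$ translated to the origin, hence is precisely the local CR orbit with parameter $u$. As a CR orbit, $M_u$ is minimal at $0$ by definition (no proper CR subsubmanifold can have the same CR dimension), and it is generic in $\C^{N-c}=\C^n\times\C^{d-c}$ because its defining equations take the same normal form as $M$ but with $u$ frozen. The main obstacle is the algebraicity in the first step: one must show that smooth first integrals of the (real-algebraic) CR orbit foliation can actually be taken algebraic and extend to algebraic holomorphic functions of the ambient space. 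This is an Artin-type algebraicity statement for real-valued CR functions, and it is the key input taken from \cite{BRZ1, BER96}.
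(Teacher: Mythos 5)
The paper does not prove this Lemma; it is a citation of \cite[Proposition 3.4]{BRZ1} and \cite[Lemma 3.4.1]{BER96}, so there is no in-paper argument to compare with. On its own terms, your sketch captures the right geometric picture (CR orbits foliate $M$, constant on orbits means CR, real-valued CR functions on a generic real-analytic submanifold extend holomorphically and you use them as $u$-coordinates), and you are right that once one has real-algebraic real-valued CR first integrals, the holomorphic extension and the subsequent normalization of the $(z,w)$-variables are routine and preserve algebraicity.

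However, the phrase "these orbits fit into a real-algebraic foliation" and the subsequent derivation of a real-algebraic submersion $S$ is a non-sequitur as stated, and it sits precisely where the content of the Lemma lies. The Nagano distribution $\mathcal G_M$ being spanned by (finitely many) algebraic vector fields of constant rank does not, via Frobenius, give algebraic integral leaves or algebraic first integrals: the Nash category is not closed under solving ODEs, so an involutive Nash distribution need not admit Nash first integrals (think of a linear flow with irrational resonance). The algebraicity of the orbit foliation is genuinely a consequence of the Segre-set machinery of \cite{BER96}: the CR orbit through $q$, together with its intrinsic complexification, is carved out by images of the iterated Segre maps, which are algebraic in $q$ and the Segre variables; it is from this, not from Frobenius, that one gets an algebraic submersion onto the leaf space. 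You do flag a gap at the end, but you locate it as an "Artin-type algebraicity statement for real-valued CR functions"; that mischaracterizes it. Once the first integrals are known to be real-algebraic, their holomorphic extension is automatically algebraic (set $\bar z = 0$ in the normal coordinates to read off the extension directly from the data on $M$), and no Artin-type argument is needed there. The hard part is not extending an algebraic CR function algebraically, but producing an algebraic $S$ in the first place, and for that one cannot avoid the Segre/orbit theory of \cite{BER96}.
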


A real-analytic submanifold
$M$, given by $\eta = \Theta(z,\bar z , \bar \eta)$, can be ``complexified''; 
its  {\em complexification}, which 
we 
denote by $\6M$, is the germ at $0$ of the complex submanifold of $\C^{2N}$ given by 
\begin{equation}\label{e:complexification}
\{(Z,\zeta)\in (\C^N\times \C^N,0):\sigma =\bar \Theta (\chi,z,\eta)\},
\end{equation}
where $Z=(z,\eta)\in \C^n \times \C^d$ and $\zeta=(\chi,\sigma)\in \C^n \times \C^d$.

In the whole paper, we always choose coordinates for our germ $(M,p)$ according to  
Lemma~\ref{l:coordconstantorbit}. 
%
We also  need to recall the {\em iterated Segre mappings} $v_j$ attached to $(M,p)$ 
(see e.g.\ \cite{BRZ1}), which we define as follows. First, to simplify notation,
for any positive integer $j$, 
we  denote by $t^j$ a variable lying in $\C^{n}$ and also introduce the variable 
$t^{[j]}:=(t^1,\ldots,t^j)\in \C^{nj}$. For $j=1$, we set 
$v_1(t^1,u):=(t^1,\Theta(t^1,0,(0,u)))$ for $t^1\in \C^n$ and $u\in \C^c$ sufficiently close to $0$; 
for $j>1$ we inductively define 
$v_j\colon (\C^{nj}\times \C^c,0)\to \C^N$   as follows:
\begin{equation}\label{e:iteratedmaps}
v_j(t^{[j]},u):=(t^j,\Theta (t^j, \bar v_{j-1}(t^{[j-1]},u))).
\end{equation}
Define $v_0(u):=(0,u)\in \C^N$,  so that \eqref{e:iteratedmaps} also holds for $j=1$. 
From the construction, each iterated Segre mapping $v_j$ defines  an algebraic map in 
a neighbhorhood of $0$ in $\C^{nj} \times \C^c$.  From \eqref{e:realnormal} one obtains the  identities
\begin{equation}\label{e:identities}
v_j(0,u)=(0,u),\quad v_{j+2}(t^{[j+2]},u)|_{t^{j+2}=t^{j}}=v_{j}(t^{[j]},u),\quad j\geq 0.
\end{equation}
Furthermore, for every $j\geq 1$, the germ at $0$ of the holomorphic map $(v_{j},\bar v_{j-1})$ 
takes its values in $\6M$, where $\6M$ is the complexification of $M$ as defined by \eqref{e:complexification}.

\section{A key algebraic property for holomorphic mappings}\label{s:algprop}

\subsection{Statement of the algebraicity property}\label{ss:keyproperty}
In this section, we assume that $f\colon (\C^N,0)\to (\C^{N'},0)$,  where $N\geq 2$, $N'\geq 1$, is 
a germ of a holomorphic mapping and that 
$(M,0)\subset \C^N$ is a germ of a  real-algebraic  generic submanifold of CR dimension $n$ and codimension $d$. We also assume that $0$ is a point of constant orbit dimension in $M$, 
and that coordinates for $(M,0)$ have been chosen according to Lemma~\ref{l:coordconstantorbit};  $\6M$ denotes the complexification of $M$ defined  in  
Section~\ref{ss:crorbit}. 
For every integer $\ell$, we define the subring $\6R^f_\ell$ of $\C\{Z,\zeta\}$ to consist of
power series which can be written in the form 
$$C(Z,\zeta,(\partial^{\gamma}\bar{f}(\zeta))_{|\gamma|\leq \ell}),$$ for
some $C\in \6N \{Z,\zeta\}[(\Lambda_\gamma)_{|\gamma|\leq \ell}]$, where for each 
$\gamma \in \N^N$, $\Lambda_\gamma \in
\C^{N'}$. Let $\6I_{\6M}$ be the ideal of $\C\{Z,\zeta\}$ of those convergent power series that vanish on $\6M$. 
Denote by $\C \{\6M\}$ the coordinate ring of $\6M$, i.e. quotient ring
$\C\{Z,\zeta\}/{\6I}_{\6M}$ and let $\pi_{\6M}\colon \C\{Z,\zeta\}\to \C \{\6M\}$ be the natural projection. 
We identify a convergent power series $J(Z)$ or $L(\zeta)$ 
with its image in $\C\{\6M\}$ via $\pi_{\6M}$ ($\pi_{\6M}$ is injective on $\C\{Z\}$ since $M$ is assumed to be generic). 
Define
$\6S^f_\ell:=\pi_{\6M}(\6R^f_\ell)$. We now say 
that $f$ satisfies assumption $(\clubsuit)$, if there exists a positive integer $\ell_0$ such that 
each component of the power series mapping $f$ (considered
as an element of the ring $\C\{\6M\}$) is algebraic over the quotient field of $\6S^f_{\ell_0}$.
To be more concrete, this means that
\begin{enumerate}
\item[($\clubsuit$)] there exists an integer $\ell_0$, integers $k_1,\dots , k_{N'}$,
and a family of algebraic power series 
$\delta_{r,s}=\delta_{r,s}\left(Z,\zeta,(\Lambda_{\gamma})_{|\gamma|\leq
\ell_0}\right)\in \6N \{Z,\zeta\}[(\Lambda_{\gamma})_{|\gamma|\leq
\ell_0}]$, $r\in \{1,\ldots,k_s\}$, $s = 1,\dots, N'$, such that
\begin{equation}\label{e:firstbasic0}
\sum_{r=0}^{k_s}\delta_{r,s}\left(Z,\zeta,(\partial^{\gamma}\bar{f}(\zeta))_{|\gamma|\leq
\ell_0}\right) \left(f_s(Z)\right)^r =0
\end{equation}
holds for $(Z,\zeta)\in \6M$ near the origin, with
$\delta_{k_s,s}\left(Z,\zeta,(\partial^{\gamma}\bar{f}(\zeta))_{|\gamma|\leq
\ell_0}\right)\not \equiv 0$ on ${\6M}$. 
\end{enumerate}
Our goal in this section is to prove that if $(\clubsuit)$ is fulfilled, then
the mapping $f$ is necessarily algebraic over a certain field of convergent power series that are 
partially algebraic. To be more precise, we need to define the following rings. 

\begin{Def}\label{d:ah}Let $(z,w,u)\in \C^n \times \C^{d-c}\times \C^c$ be our fixed chosen normal coordinates. For every nonnegative integer $\ell$, let $\6A^f_\ell$ be the subring of $\C \{z,w,u\}$ consisting of those convergent power series $T=T(z,w,u)$ which can be written in the form $A(z,w,u,(\partial^{|\alpha|} f(0,u))_{|\alpha|\leq \ell})$ for some $A(z,w,u,(\Lambda_\alpha)_{|\alpha|\leq \ell})\in \6N \{z,w,u\}[(\Lambda_\alpha)_{|\alpha|\leq \ell}]$. We  furthermore consider the ring $\6A^f$ defined by 
\begin{equation}\label{e:ah0}
\6A^f:=\bigcup_{j=0}^\infty \6A_j^f.
\end{equation}
\end{Def}
Note that the rings $\6A^f_\ell$ depend on the (fixed) choice of normal coordinates.
We can now state the main result of this section.

\begin{Pro}\label{p:keyalg}
Let $(M,0)\subset \C^N$ be a real-algebraic generic submanifold which 
is of constant orbit dimension at $0$, and $f\colon (\C^N,0)\to (\C^{N'},0)$ be a germ of a holomorphic mapping. 
If $f$ satisfies $(\clubsuit)$, then each component of the mapping $f$ is algebraic over the quotient field of $\6A^f$.
\end{Pro}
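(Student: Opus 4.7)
The plan is to exploit $(\clubsuit)$ by substituting the iterated Segre mappings $v_j$ attached to our normal coordinates, and then to use the minimality of the fibers $M_u$ (from Lemma~\ref{l:coordconstantorbit}) to pull the resulting algebraic dependence back to $f$ itself. First, since $(v_j(t^{[j]},u),\bar v_{j-1}(t^{[j-1]},u))$ lies in $\6M$, substituting into $(\clubsuit)$ yields, for each $s\in\{1,\ldots,N'\}$ and each $j\ge 1$, a polynomial identity
\begin{equation*}
\sum_{r=0}^{k_s}\Delta_{r,s}^{(j)}(t^{[j]},u)\bigl(f_s(v_j(t^{[j]},u))\bigr)^r=0,
\end{equation*}
whose coefficients $\Delta_{r,s}^{(j)}$ are algebraic in $(t^{[j]},u)$ and polynomial in the composed jets $(\partial^\gamma\bar f\circ\bar v_{j-1})_{|\gamma|\le\ell_0}$. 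Taking coefficient-wise conjugate of $(\clubsuit)$ produces a dual identity on $\6M$ for $\bar f_s$ with coefficients polynomial in $(\partial^\gamma f(Z))_{|\gamma|\le\ell_0}$; one also substitutes the pair $(v_{j-1},\bar v_j)\in\6M$ into this dual identity, to obtain a symmetric equation for $\bar f_s\circ\bar v_j$.

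The second and central step is an induction on $j$ showing that, for every $j\ge 0$ and every integer $m$, both $\partial^\gamma f\circ v_j$ and $\partial^\gamma\bar f\circ\bar v_j$ for $|\gamma|\le m$ are algebraic over a ring $R_j^m\subset\C\{t^{[j]},u\}$ generated by algebraic power series in $(t^{[j]},u)$ and by the jets $(\partial^\alpha f(0,u))_{|\alpha|\le L(j,m)}$ for a suitable integer $L(j,m)$. The base case $j=0$ uses the dual identity restricted to the Segre variety $\{Z=(z,0,u)\}$ through $\zeta=(0,0,u)$ (which is well-defined by the normality relation $\bar\Theta(0,z,\eta)=\eta$ from \eqref{e:normalnormal}) and then the specialisation $z=0$: this yields directly an algebraic dependence of each $\partial^\gamma\bar f(0,0,u)$ on $(\partial^\alpha f(0,u))_\alpha$, so that the conjugate Taylor coefficients at the origin lie in the algebraic closure of $\6A^f$. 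The inductive step couples the polynomial identities of the previous paragraph with chain-rule calculus; the identity $v_{j+2}|_{t^{j+2}=t^j}=v_j$ from \eqref{e:identities} is used to propagate algebraicity from level $j-1$ to level $j$ without expanding the $L(j,m)$-bound uncontrollably.

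Third, by the minimality of each $M_u$ and the standard iterated Segre-chain rank argument (compare \cite{BER96,Mir98,MirCAG}), there exists an integer $j_0$ such that the algebraic map $(t^{[j_0]},u)\mapsto v_{j_0}(t^{[j_0]},u)$ is submersive at a generic point; selecting $N-c$ appropriate components of $t^{[j_0]}$ produces an algebraic right inverse $\psi\colon U\to\C^{n j_0}\times\C^c$ of $v_{j_0}$ on an open subset $U\subset\C^N$. Composing the algebraic identity at level $j_0$ with $\psi$ gives an algebraic identity for $f_s$ on $U$ with coefficients in the pullback $\psi^\ast R_{j_0}^0\subset\6A^f$, and by uniqueness of algebraic relations under analytic continuation, this identity extends to a neighborhood of the origin. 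The principal obstacle is the induction in the second step: eliminating the conjugate jets $\partial^\gamma\bar f\circ\bar v_{j-1}$ in favour of expressions purely in $(\partial^\alpha f(0,u))_\alpha$ requires combining $(\clubsuit)$ with its conjugate through the reality encoded in the normality conditions on $\Theta$ and $\bar\Theta$, so that no $\partial^\alpha\bar f(0,u)$ ever needs to be included as an independent generator of the ring $\6A^f$.
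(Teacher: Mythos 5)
Your overall strategy is the right one and matches the paper's in outline: substitute iterated Segre mappings into $(\clubsuit)$, alternate with the conjugate relation to trade $\bar f$-jets for $f$-jets, and in the end invert $v_j$ at a point of full rank. However, your induction is set up in the wrong direction and, as a result, papers over the central technical difficulty: preserving nontriviality of the polynomial relations when you restrict to lower-dimensional Segre sets.

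Concretely, the pair $(v_{j+1},\bar v_j)$ has generic rank equal to $\dim\6M$ only for $j\geq d+1$ (this is precisely what the finite-type criterion gives). For small $j$ — and a fortiori for your proposed base case $j=0$, where you further specialise $z=0$ — the restriction of the leading coefficient $\delta_{k_s,s}$ of $(\clubsuit)$ to the image of $(v_1,\bar v_0)$ may vanish identically, so nothing prevents the substituted identity from being trivially $0=0$. You therefore cannot ``directly'' read off that $\partial^\gamma\bar f(0,u)$ is algebraic over $\6A^f$; in fact the paper never proves that statement and does not need it. The paper's actual mechanism is the step-down Lemma~\ref{l:induction}: starting from a nontrivial relation at a high level ($j\geq d+1$), differentiate in $t^{j+2}$ up to the minimal order at which the restricted coefficient becomes nonzero, then restrict $t^{j+2}=t^j$ using \eqref{e:identities}, and repeat in $t^{j+1}$; this is what keeps the descent nontrivial. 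Your phrase ``the identity $v_{j+2}|_{t^{j+2}=t^j}=v_j$ is used to propagate algebraicity from level $j-1$ to level $j$'' has the direction reversed and omits exactly this derivative bookkeeping.

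Relatedly, your inductive hypothesis is stronger than what is needed or provable: you require that at every level $j$ both $\partial^\gamma f\circ v_j$ and $\partial^\gamma\bar f\circ\bar v_j$ be algebraic over a ring generated only by $(\partial^\alpha f(0,u))_\alpha$. The paper instead alternates: $\partial^\mu f\circ v_1$ is algebraic over a ring containing $(\partial^\alpha\bar f)(0,u)$, its conjugate $\partial^\mu\bar f\circ\bar v_1$ is algebraic over a ring containing $(\partial^\alpha f)(0,u)$, and only after substituting into the level-$2$ identity do you get $\partial^\nu f\circ v_2$ algebraic over a ring with just $(\partial^\alpha f)(0,u)$. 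This bookkeeping (over $\6D^f_{j-1}$ for even $j$, over $\bar{\6D}^f_{j-1}$ for odd $j$) is what makes the final step at $j=2(d+1)$ work after inverting the Segre map. Your third step is essentially correct, but without a correct version of the descent the inputs to it are not available.
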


The rest of this section is devoted to the proof of Proposition~\ref{p:keyalg}.

\subsection{Algebraic dependence over quotient fields of certain rings}\label{ss:rings}

The first step in the proof of Proposition~\ref{p:keyalg} is given by the following lemma. Its proof is very similar to the proof of \cite[Proposition~5.2]{MirCAG}.

\begin{Lem}\label{l:lemderivative}
Let $M,f$ be as in Proposition~{\rm \ref{p:keyalg}}. Then, for every multiindex $\mu \in  \N^N$, each component of the mapping $\partial^\mu f$ is algebraic over the quotient field of $\6S^f_{\ell_0+|\mu|}$.
\end{Lem}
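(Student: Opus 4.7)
The plan is to proceed by induction on $|\mu|$, with the base case $|\mu|=0$ being precisely the hypothesis $(\clubsuit)$. For the inductive step, suppose that for some $\mu'\in\N^N$ each component $\partial^{\mu'}f_s$ is algebraic over $\text{Frac}(\6S^f_{\ell_0+|\mu'|})$, and fix an index $j$; I want to show that $\partial^{\mu'+e_j}f_s$ is algebraic over $\text{Frac}(\6S^f_{\ell_0+|\mu'|+1})$. Since $\C\{\6M\}$ is an integral domain and we work in characteristic zero, one can pick a minimal-degree polynomial
\begin{equation*}
P(X)=\sum_{r=0}^{K}a_r\,X^r\in\6S^f_{\ell_0+|\mu'|}[X]
\end{equation*}
such that $P(\partial^{\mu'}f_s)=0$ in $\C\{\6M\}$; by minimality, $P'(\partial^{\mu'}f_s)\neq 0$ in $\C\{\6M\}$.

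The main step is to apply to the identity $P(\partial^{\mu'}f_s)=0$ a holomorphic vector field on a neighborhood of $\6M$ that is tangent to $\6M$ and that restricts to $\partial/\partial Z_j$ on functions of $Z$ alone. Writing $\6M=\{\sigma=\bar\Theta(\chi,z,\eta)\}$ and $Z=(z,\eta)$, the natural choice is
\begin{equation*}
\6L_j:=\frac{\partial}{\partial Z_j}+\sum_{k=1}^{d}\frac{\partial\bar\Theta_k}{\partial Z_j}(\chi,z,\eta)\,\frac{\partial}{\partial\sigma_k}.
\end{equation*}
Because $\bar\Theta$ is algebraic in $(Z,\zeta)$, each coefficient $(\partial\bar\Theta_k/\partial Z_j)$ is algebraic. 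Applying $\6L_j$ to the identity on $\6M$ and using that $\6L_j(\partial^{\mu'}f_s)=\partial^{\mu'+e_j}f_s$ (a function of $Z$ only), one obtains
\begin{equation*}
\partial^{\mu'+e_j}f_s\cdot P'(\partial^{\mu'}f_s)=-\sum_{r=0}^{K}(\6L_j a_r)(\partial^{\mu'}f_s)^{r}\quad\text{in }\C\{\6M\}.
\end{equation*}

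The final point to verify is that each $\6L_j a_r$ lies in $\6S^f_{\ell_0+|\mu'|+1}$. Unwinding the definition, $a_r=\pi_{\6M}(A_r(Z,\zeta,(\partial^\gamma\bar f(\zeta))_{|\gamma|\le\ell_0+|\mu'|}))$ with $A_r$ algebraic in $(Z,\zeta)$ and polynomial in the jet variables; applying $\partial/\partial Z_j$ leaves the order of the $\bar f$-jet unchanged, while applying $(\partial\bar\Theta_k/\partial Z_j)\,\partial/\partial\sigma_k$ produces, via the chain rule on the terms $\partial^\gamma\bar f(\zeta)$, new contributions involving $\partial^{\gamma+e'_k}\bar f(\zeta)$ with $|\gamma|+1\le\ell_0+|\mu'|+1$. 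All prefactors are algebraic in $(Z,\zeta)$, so the output is a polynomial in jets of order $\le\ell_0+|\mu'|+1$ with algebraic coefficients in $(Z,\zeta)$, i.e., an element of $\6R^f_{\ell_0+|\mu'|+1}$, whose image in $\C\{\6M\}$ lies in $\6S^f_{\ell_0+|\mu'|+1}$. Dividing by $P'(\partial^{\mu'}f_s)$ in the quotient field then expresses $\partial^{\mu'+e_j}f_s$ as an algebraic combination of elements of $\6S^f_{\ell_0+|\mu'|+1}$ together with $\partial^{\mu'}f_s$; since $\6S^f_{\ell_0+|\mu'|}\subset\6S^f_{\ell_0+|\mu'|+1}$, the inductive hypothesis yields algebraicity of $\partial^{\mu'}f_s$ over the larger field as well, and transitivity of algebraic dependence closes the induction.

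The only delicate point is the nonvanishing of $P'(\partial^{\mu'}f_s)$, which I expect to be the real subtlety; it is handled by the minimal-polynomial choice combined with separability in characteristic zero, exactly as in \cite[Proposition~5.2]{MirCAG}. Everything else is a bookkeeping argument tracking how the differential operator $\6L_j$ acts on the two filtrations (polynomial degree in jets, and order of the jet variables) defining the rings $\6R^f_\ell$ and $\6S^f_\ell$.
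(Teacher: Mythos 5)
Your proof is correct and follows essentially the same line as the paper's: the paper's proof is a two-sentence sketch that differentiates the relation \eqref{e:firstbasic0} along $\6M$ and invokes the chain-rule bookkeeping of \cite[Proposition~5.2]{MirCAG}, and your argument is precisely that computation written out explicitly, including the correct choice of tangent vector field $\6L_j$ on $\6M$ and the check that applying $\6L_j$ raises the jet order by at most one. The one place you are more careful than the paper's sketch is the nonvanishing of the derivative: the paper differentiates the given relation directly and leaves separability implicit (absorbed into the reference to \cite{MirCAG}), whereas you pass to a minimal-degree polynomial first so that $P'(\partial^{\mu'}f_s)\neq 0$ is automatic in characteristic zero. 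Both routes work; yours is cleaner to state, while the paper's avoids introducing the minimal polynomial at the cost of an extra (routine) degree-reduction argument when the derivative of an arbitrary relation happens to vanish.
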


\begin{proof} For $\mu=0$, this is the content of assumption $(\clubsuit)$. Differentiating \eqref{e:firstbasic0} once, each first order derivative of each component of the mapping $f$ is algebraic over the quotient field of the subring of $\C\{\6M\}$ generated by $f$ and $\6S^f_{\ell_0+1}$ (this follows from the chain rule; see \cite[Proposition~5.2]{MirCAG} for exactly similar arguments). Since (each component of) the mapping $f$ is already algebraic over the quotient field of $\6S^f_{\ell_0}\subset \6S^f_{\ell_0+1}$, this proves the proposition for all multiindices $\mu$ of length one. The conclusion for multiindices of arbitrary length follows in the same way by induction. 
\end{proof}

\subsection{Iterated Segre mappings and associated rings}

Our next step is to use the iterated Segre mappings as introduced in Section~\ref{ss:crorbit}. For this, we need to introduce even more subrings. 
Let $j$ be a nonnegative integer. Recall that $t^j$ denotes a variable lying in $\C^n$ and $t^{[j]}$ stands for
$(t^1,\ldots,t^j)$. For every such $j$ and every integer $\ell$, we let $\bar{\6B}^f_{j,\ell}$ be the subring of
$\C\{t^{[j+1]},u\}$ consisting of the convergent power series of the form
$$K(t^{[j+1]},u,((\partial^{|\alpha|} \bar f)\circ \bar v_j(t^{[j]},u))_{|\alpha|\leq \ell})$$
for some $K\in \6N \{t^{[j+1]},u\}[(\Lambda_\alpha)_{|\alpha|\leq \ell}]$. Similarly to before, we also set 
\begin{equation}\label{e:bj}
\bar{\6B}^f_j:=\cup_{\ell =0}^\infty \bar{\6B}^f_{j,\ell}.
\end{equation}

For every integer $\ell$, define also the ring $\bar{\6D}_{j,\ell}^f$ 
to be the subring of $\C\{t^{[j+1]},u\}$ consisting of those power series of the form
$$A(t^{[j+1]},u,((\partial^{|\alpha|} \bar f)(0,u))_{|\alpha|\leq \ell})$$
for some $A\in \6N \{t^{[j+1]},u\}[(\Lambda_\alpha)_{|\alpha|\leq \ell}]$ 
and we also set $\bar{\6D}^f_j:=\cup_{\ell =0}^\infty \bar{\6D}^f_{j,\ell}$. 

Analogously, define the ring ${\6D}_{j,\ell}^f$ as the subring of 
$\C\{t^{[j+1]},u\}$ consisting of the power series of the form
$$R(t^{[j+1]},u,((\partial^{|\alpha|}  f)(0,u))_{|\alpha|\leq \ell})$$
for some $R\in \6N \{t^{[j+1]},u\}[(\Lambda_\alpha)_{|\alpha|\leq \ell}]$ 
and  set $\6D^f_j:=\cup_{\ell =0}^\infty \6D^f_{j,\ell}$.

Our next step is to prove the following result.

\begin{Lem}\label{l:iterate1}
Let $M,f$ be as in Proposition~{\rm \ref{p:keyalg}}. With the above notation, for every multiindex $\mu \in \N^N$ and every integer $j$, each component of the mapping $(\partial^\mu f)\circ v_{j+1}$ is algebraic over the quotient field of $ \bar{\6B}^f_j$.
\end{Lem}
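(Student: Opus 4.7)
The plan is to pull back, via the iterated Segre map $(v_{j+1}, \bar v_j)$ into $\6M$, the algebraic relation furnished by Lemma~\ref{l:lemderivative}. That lemma provides, for each component index $s$, an integer $K_s\ge 1$ and polynomials $\widetilde\delta_{r,s} \in \6N\{Z,\zeta\}[(\Lambda_\gamma)_{|\gamma|\leq \ell_0+|\mu|}]$ such that
\begin{equation*}
\sum_{r=0}^{K_s} \widetilde\delta_{r,s}\bigl(Z,\zeta,(\partial^\gamma \bar f(\zeta))_{|\gamma|\leq \ell_0+|\mu|}\bigr)\,((\partial^\mu f)_s(Z))^r \equiv 0 \quad \text{on } \6M,
\end{equation*}
with $\widetilde\delta_{K_s,s} \not\equiv 0$ on $\6M$. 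I would fix, for later use, a minimal such relation, thought of as the minimal polynomial of $(\partial^\mu f)_s$ over the quotient field of $\6S^f_{\ell_0+|\mu|}$.

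Next I would substitute $(Z,\zeta) = (v_{j+1}(t^{[j+1]},u), \bar v_j(t^{[j]},u))$. This is legitimate because by construction, as recalled right after \eqref{e:identities}, the germ $(v_{j+1}, \bar v_j)$ maps into $\6M$. The resulting identity
\begin{equation*}
\sum_{r=0}^{K_s} \widetilde\delta_{r,s}\bigl(v_{j+1}, \bar v_j, ((\partial^\gamma \bar f)\circ \bar v_j)_{|\gamma|\leq \ell_0+|\mu|}\bigr)\,\bigl(((\partial^\mu f)_s)\circ v_{j+1}\bigr)^r \equiv 0
\end{equation*}
holds in $\C\{t^{[j+1]},u\}$, and since $v_{j+1}$ and $\bar v_j$ are algebraic, each coefficient on the left manifestly lies in $\bar{\6B}^f_{j,\ell_0+|\mu|}\subset \bar{\6B}^f_j$.

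The key obstacle, and the only genuinely delicate point, is ensuring that the pulled-back relation is nontrivial, i.e.\ that the leading coefficient survives as a nonzero element of $\C\{t^{[j+1]},u\}$. My strategy is to first treat large $j$: under the minimality of the fibers $M_u$ granted by Lemma~\ref{l:coordconstantorbit}, standard properties of iterated Segre maps imply that $(v_{j+1}, \bar v_j)$ becomes a submersion onto $\6M$ once $j$ is sufficiently large, so the induced pullback of holomorphic germs is injective and the minimal polynomial identity pulls back to a nontrivial one. For the remaining small values of $j$, I would exploit the identities \eqref{e:identities}, namely $v_{j+2}|_{t^{j+2}=t^j}=v_j$ and the corresponding identity for $\bar v_{j+2}$, to restrict a nontrivial relation established at a higher level $j'>j$ down to level $j$; the specialized coefficients still lie in $\bar{\6B}^f_j$ (since the restricted $t^{j'+1}$ is identified with some $t^i$, $i\leq j+1$), and the distinguished leading term of the monic minimal polynomial is preserved under such partial identifications of variables.

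The remaining verification is routine: tracking that the pullback of an element of $\6N\{Z,\zeta\}[\Lambda]$ under an algebraic substitution lies in the ring $\bar{\6B}^f_{j,\ell_0+|\mu|}$, and that the algebraic dependence of $(\partial^\mu f)_s$ over $\mathrm{Frac}(\6S^f_{\ell_0+|\mu|})$ on $\6M$ transfers to an algebraic dependence of $(\partial^\mu f)_s \circ v_{j+1}$ over $\mathrm{Frac}(\bar{\6B}^f_j)$. This closely parallels the strategy of \cite[Prop.~5.2]{MirCAG}, the model on which Lemma~\ref{l:lemderivative} itself is patterned.
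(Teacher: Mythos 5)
Your overall architecture matches the paper's: pull the algebraic relation from Lemma~\ref{l:lemderivative} back along $(v_{j+1},\bar v_j)$ into $\6M$, observe that the coefficients land in $\bar{\6B}^f_{j}$, and handle the nontriviality issue by treating large $j$ (via the generic full rank $N+n=\dim\6M$ of $(v_{j+1},\bar v_j)$ for $j\ge d+1$, exactly as in the paper) and then descending to small $j$ using the identities in \eqref{e:identities}. The first two parts are fine.

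The gap is in your step-down argument. You claim that ``the distinguished leading term of the monic minimal polynomial is preserved under such partial identifications of variables,'' but this does not survive scrutiny. If you use the genuinely monic minimal polynomial of $g\circ v_{j'+1}$ over $\mathrm{Frac}(\bar{\6B}^f_{j'})$, the leading coefficient is $1$, but the remaining coefficients live in the \emph{quotient field}, not in $\bar{\6B}^f_{j'}$ itself; their denominators may vanish identically under the specialization $t^{j+2}=t^j$, $t^{j+1}=t^{j-1}$, so the restriction of the relation is not even well defined, let alone nontrivial. If instead you clear denominators and work with coefficients in $\bar{\6B}^f_{j',\ell}$ (which is what \eqref{e:firsteq} actually records), then the leading coefficient $\delta_e$ is a convergent power series, not a unit, and there is no a priori reason for $\delta_e\bigr|_{t^{j+2}=t^j,\,t^{j+1}=t^{j-1}}$ to remain nonzero. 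This is precisely the obstruction that the paper's Lemma~\ref{l:induction} is designed to overcome: one first chooses $\nu$ of \emph{minimal} length such that some $\partial^{\nu}_{t^{j+2}}\delta_r\bigr|_{t^{j+2}=t^j}\not\equiv 0$ with $r\ge 1$ (the minimality guarantees no Leibniz cross-terms survive), applies $\partial^{\nu}_{t^{j+2}}$ to the whole identity and restricts, and then repeats with $\partial^{\beta}_{t^{j+1}}$ and the restriction $t^{j+1}=t^{j-1}$ to push the coefficient data from $\bar v_{j+1}$ down to $\bar v_{j-1}$. Without this differentiation argument, your descent step does not establish nontriviality of the restricted relation, and the proof is incomplete for $j< d+1$.
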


In order to prove Lemma~\ref{l:iterate1}, we need the following lemma which
is a kind of ``step-down''  procedure for 
algebraicity over the rings $\bar{\6B}^f_{j+1}$; it 
can be seen as an adaptation of \cite[Lemma~5.4]{MirCAG} to our situation.

\begin{Lem}\label{l:induction} Assume that we are in the setting of Lemma~{\rm \ref{l:iterate1}}, and let $j$ be a positive integer. Assume that $g\colon (\C^N,0)\to \C$ is a holomorphic function such that
	$g\circ v_{j+2}$ is algebraic over the quotient field of $\bar{\6B}^f_{j+1}$. Then $g\circ v_{j}$ 
	is algebraic over the quotient field of $\bar {\6B}^f_{j-1}$.
\end{Lem}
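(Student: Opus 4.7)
The plan is to mimic the descent argument of \cite[Lemma~5.4]{MirCAG}. The crucial inputs are two algebraic identities satisfied by the iterated Segre maps, namely
\begin{equation*}
\text{(a)}\ \ v_{j+2}(t^{[j+2]},u)\big|_{t^{j+2}=t^j}=v_j(t^{[j]},u),\qquad \text{(b)}\ \ \bar v_{j+1}(t^{[j+1]},u)\big|_{t^{j+1}=t^{j-1}}=\bar v_{j-1}(t^{[j-1]},u),
\end{equation*}
both of which follow from \eqref{e:iteratedmaps} by a direct computation using the reality identity \eqref{e:realnormal}: for (a) one applies $\Theta(z,\chi,\bar\Theta(\chi,z,\eta))=\eta$ with $(z,\chi)=(t^j,t^{j+1})$, and for (b) one applies its conjugate form $\bar\Theta(\chi,z,\Theta(z,\chi,\eta))=\eta$ with $(z,\chi)=(t^j,t^{j-1})$.

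Given the hypothesis, the first step is to pick a nontrivial algebraic relation
\begin{equation*}
\sum_{k=0}^m \tilde K_k\bigl(t^{[j+2]},u,(\partial^\alpha\bar f\circ\bar v_{j+1}(t^{[j+1]},u))_{|\alpha|\leq\ell}\bigr)\bigl(g\circ v_{j+2}(t^{[j+2]},u)\bigr)^k=0,
\end{equation*}
with $\tilde K_k\in\6N\{t^{[j+2]},u\}[(\Lambda_\alpha)_{|\alpha|\leq\ell}]$ and with the evaluated top coefficient $K_m\in\bar{\6B}^f_{j+1}$ not identically zero. The next step is to apply the substitutions $t^{j+2}=t^j$ followed by $t^{j+1}=t^{j-1}$. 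By (a), $g\circ v_{j+2}$ becomes $g\circ v_j$, and by (b), each $\partial^\alpha\bar f\circ\bar v_{j+1}$ becomes $\partial^\alpha\bar f\circ\bar v_{j-1}$. Since $\bar v_{j+1}$ is algebraic in $(t^{[j+1]},u)$, the specialization of the algebraic polynomial $\tilde K_k$ yields a new algebraic power series in $(t^{[j]},u)$, producing a relation
\begin{equation*}
\sum_{k=0}^m L_k(t^{[j]},u)\bigl(g\circ v_j(t^{[j]},u)\bigr)^k=0, \qquad L_k\in\bar{\6B}^f_{j-1}.
\end{equation*}

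The main obstacle is to rule out the degenerate case in which all $L_k$ vanish identically, i.e.\ that the specialization trivializes the relation. The plan here is to take the starting polynomial to have minimal possible degree $m$ realizing an algebraic relation for $g\circ v_{j+2}$ over $\mathrm{Frac}(\bar{\6B}^f_{j+1})$, and to argue by contradiction: if every $L_k$ vanished, then each $K_k$, viewed as a convergent power series in $(t^{[j+2]},u)$, would belong to the ideal $I_V=(t^{j+2}-t^j,\,t^{j+1}-t^{j-1})$ of the analytic subvariety $V=\{t^{j+2}=t^j,\,t^{j+1}=t^{j-1}\}$. Expanding each $K_k$ in terms of the generators of $I_V$ and exploiting that, by (a), the value $g\circ v_{j+2}|_{t^{j+2}=t^j}=g\circ v_j$ is independent of $t^{j+1}$ and $t^{j+2}$, successive partial differentiations along $V$ would produce a polynomial relation of the same or smaller degree in $T$ with coefficients still in $\bar{\6B}^f_{j+1}$ but whose restrictions to $V$ are not all zero; this would contradict the minimality of $m$. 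Consequently at least one $L_k$ is nonzero, giving the required algebraic dependence of $g\circ v_j$ over $\mathrm{Frac}(\bar{\6B}^f_{j-1})$.
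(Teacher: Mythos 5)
Your overall strategy — substitute $t^{j+2}=t^j$ and $t^{j+1}=t^{j-1}$, use the identities \eqref{e:identities} to turn $g\circ v_{j+2}$ into $g\circ v_j$ and $\bar v_{j+1}$ into $\bar v_{j-1}$, and then deal with the possible trivialization — is the same as the paper's. The gap lies in your non-triviality argument, which as stated does not go through.

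First, there is no contradiction to be had with "minimality of $m$." If all the $L_k$ vanish, you obtain, after differentiation and restriction, a polynomial relation for $g\circ v_j$ over $\mathrm{Frac}(\bar{\6B}^f_{j-1})$, \emph{not} a lower-degree relation for $g\circ v_{j+2}$ over $\mathrm{Frac}(\bar{\6B}^f_{j+1})$; so it cannot contradict the assumed minimality of $m$ (which is about the latter). Indeed, the correct conclusion of that computation is simply the statement of the lemma, not a contradiction — the minimal-degree hypothesis is a red herring. Second, and more importantly, the delicate technical point is never actually addressed: when one differentiates $\sum_r \delta_r\,(g\circ v_{j+2})^r=0$ in the $t^{j+2}$-direction, Leibniz produces cross terms involving $\partial_{t^{j+2}}(g\circ v_{j+2})^r$, and the fact that $g\circ v_{j+2}\big|_{t^{j+2}=t^j}$ is independent of $t^{j+2}$ after restriction does not by itself kill them. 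What kills them is choosing a derivative $\partial^\nu/\partial(t^{j+2})^\nu$ of \emph{minimal order} $|\nu|$ with the property that some $\partial^\nu\delta_r\big|_{t^{j+2}=t^j}$ ($r\geq 1$) is nonzero: then every lower-order cross term carries a factor $\partial^{\nu'}\delta_r\big|_{t^{j+2}=t^j}$, $|\nu'|<|\nu|$, which vanishes by minimality. This minimality-of-derivative-order argument is precisely what the paper does, in two stages: first a minimal $\nu$ in $t^{j+2}$ (where Leibniz cross terms are genuinely present and are killed by minimality), followed by restriction $t^{j+2}=t^j$; then an arbitrary $\beta$ in $t^{j+1}$ (where no Leibniz cross terms arise, since $g\circ v_j$ is independent of $t^{j+1}$), followed by restriction $t^{j+1}=t^{j-1}$. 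Your outline never isolates this as the crucial mechanism, and the spurious "contradiction" replaces it. To repair the proof, drop the minimality-of-$m$ framing and instead take minimality on the order of the $(t^{j+2},t^{j+1})$-derivative making some coefficient nonvanishing on $V$.
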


In what follows, to shorten the notation, we write $v_j$ instead of $v_j(t^{[j]},u)$.

\begin{proof}[Proof of Lemma~{\rm \ref{l:induction}}]  
	By assumption, there exist  positive integers $e$ and $\ell$, and 
	a family of power series $\delta_{r}\in \6N \{t^{[j+2]},u\}[(\Lambda_\alpha)_{|\alpha|\leq \ell}]$,
	 $r=0,\ldots,e$, such that
\begin{equation}\label{e:firsteq}
\sum_{r=0}^e \delta_r\left(t^{[j+2]},u,((\partial^{|\alpha|} \bar f)\circ \bar v_{j+1})_{|\alpha|\leq \ell}\right) (g\circ v_{j+2})^r=0,
\end{equation}
with 
$\delta_e (t^{[j+2]},u,((\partial^{|\alpha|} \bar f)\circ \bar v_{j+1})_{|\alpha|\leq \ell})\not \equiv 0$. Let
$\nu\in \N^n$ be  a multiindex of minimal length with respect to the 
property that 
there exists $r\in \{1,\ldots,e\}$ satisfying 
$$\frac{\partial^{|\nu|}}{\partial (t^{j+2})^\nu}\left[ \delta_r\left(t^{[j+2]},u,((\partial^{|\alpha|} \bar f)\circ \bar v_{j+1})_{|\alpha|\leq \ell}\right)\right]\biggr|_{t^{j+2}=t^j}\not \equiv 0.$$
Applying $\frac{\partial^{|\nu|}}{\partial (t^{j+2})^\nu}$ to \eqref{e:firsteq}, evaluating for $t^{j+2}=t^j$, and using the second identity in \eqref{e:identities}, 
we obtain that
\begin{equation}\label{e:secondeq}
\sum_{r=0}^e \frac{\partial^{|\nu|}}{\partial (t_{j+2})^\nu} \left[\delta_r\left(t^{[j+2]},u,((\partial^{|\alpha|} \bar f)\circ \bar v_{j+1})_{|\alpha|\leq \ell}\right)\right]\biggr|_{t^{j+2}=t^j} (g\circ v_{j})^r=0.
\end{equation}
We write 
\begin{multline}\label{e:thirdeq}
	\widetilde \delta_r (t^{[j+1]},u,((\partial^{|\alpha|} \bar f)\circ \bar v_{j+1})_{|\alpha|\leq \ell}))\\:=
	\frac{\partial^{|\nu|}}{\partial (t_{j+2})^\nu} \left[\delta_r\left(t^{[j+2]},u,((\partial^{|\alpha|} \bar f)\circ \bar v_{j+1})_{|\alpha|\leq \ell}\right)\right]\biggr|_{t^{j+2}=t^j},
\end{multline}
and observe that by our choice of $\nu$, 
there exists $\tilde r \in \{1,\ldots,e\}$ 
such that 
\[ \widetilde \delta_r \left(t^{[j+1]},u,((\partial^{|\alpha|} \bar f)\circ \bar v_{j+1})_{|\alpha|\leq \ell})\right)
\not\equiv 0.
\]
Hence, if we
choose $\beta \in \N^n$ such that
$$\frac{\partial^{|\beta|}}{\partial (t^{j+1})^\beta}\left[ \widetilde \delta_r 
\left(t^{[j+1]},u,((\partial^{|\alpha|} \bar f)\circ \bar v_{j+1})_{|\alpha|\leq \ell})\right) \right]_{t^{j+1}=t^{j-1}}\not \equiv 0,$$
and for $0\leq r\leq e$ write
$$\widehat \delta_r:=\frac{\partial^{|\beta|}}{\partial (t^{j+1})^\beta}\left[ \widetilde \delta_r 
\left(t^{[j+1]},u,((\partial^{|\alpha|} \bar f)\circ \bar v_{j+1})_{|\alpha|\leq \ell})\right) \right]\biggr|_{t^{j+1}=t^{j-1}},$$
we see that each $\widehat \delta_r \in \bar{\6B}^f_{j-1,\ell+|\beta|}$ and that the nontrivial relation
$$\sum_{r=0}^e \widehat \delta_r (g\circ v_{j})^r=0$$
provides the desired result. The proof of the lemma is complete.
\end{proof}

\begin{proof}[Proof of Lemma~{\rm \ref{l:iterate1}}] By Lemma~\ref{l:lemderivative}, for every multiindex $\mu$ and every integer $s\in \{1,\ldots,N'\}$, there exists an integer $e(\mu,s)$, a family of power series 
$\Delta^\mu_{r,s}\left(Z,\zeta,(\Lambda_{\gamma})_{|\gamma|\leq
\ell_0+|\mu|}\right)\in \6N \{Z,\zeta\}[(\Lambda_{\gamma})_{|\gamma|\leq
\ell_0+|\mu|}]$, $r\in \{0,\ldots,e(\mu,s)\}$, such that for all $(Z,\zeta)\in \6M$ (near the origin)
\begin{equation}\label{e:newid}
\sum_{r=0}^{e(\mu,s)}\Delta^\mu_{r,s}\left(Z,\zeta,(\partial^{\gamma}\bar{f}(\zeta))_{|\gamma|\leq
\ell_0+|\mu|}\right)(\partial^\mu f_s (Z))^r=0
\end{equation}
with \begin{equation}\label{e:display}
\Delta^\mu_{e(\mu,s),s}\left(Z,\zeta,(\partial^{\gamma}\bar{f}(\zeta))_{|\gamma|\leq
\ell_0+|\mu|}\right)\not \equiv 0\  {\rm on}\  \6M, 
\end{equation}
and where $\ell_0$ is given by condition $(\clubsuit)$. For every integer $j$, the algebraic map $(v_{j+1},\bar v_j)$ takes its values in $\6M$ and therefore, from \eqref{e:newid}, we have the following identities
\begin{equation}\label{e:newideas}
\sum_{r=0}^{e(\mu,s)}\Delta^\mu_{r,s}\left(v_{j+1},\bar v_j,((\partial^{\gamma}\bar{f})\circ \bar v_j)_{|\gamma|\leq
\ell_0+|\mu|}\right) \left((\partial^\mu f_s)\circ v_{j+1}\right)^r =0.
\end{equation}
In order to see that  \eqref{e:newideas} implies that  $(\partial^\mu f)\circ v_{j+1}$ 
is algebraic over the quotient
field of $\bar{\6B}^f_j$ for all $j\geq 0$, we note that by Lemma~\ref{l:induction} 
it is enough to check this for all $j\geq d+1$. Now, if $j\geq d+1$, we claim that the map
\[(\C^{n(j+1)}\times \C^c,0)\ni (t^{[j+1]},u)\mapsto (v_{j+1}(t^{[j+1]},u),\bar 
v_j(t^{[j]},u)), \]
which takes values in $\6M$, has generic rank $N+n = \dim \6M$; from this we conclude that 
for these $j$, by \eqref{e:display}, the relation \eqref{e:newideas} is nontrivial.

We now turn to the proof of this last claim, which is a consequence of the 
finite
type criterion in \cite{BER96}. 
Let $M_0$ be the real-algebraic generic submanifold given by \eqref{e:Mu}. 
Since $M_0$ is of finite type, by the 
finite
type criterion in \cite{BER96}, the map $(\C^{n(d+1)},0)\ni t^{[d+1]}\mapsto v_{d+1}(t^{[d+1]},0)$ is of generic rank
$n+d-c$, where $d,c$ are as in Lemma~\ref{l:coordconstantorbit}. Therefore, for any $j\geq d+1$, the mapping
$(\C^{nj}\times \C^c,0)\ni (t^{[j]},u)\mapsto v_{j}(t^{[j]},u)$ is of generic rank $n+d-c+c=N$. 
Hence for all such $j$'s,
the generic rank of the mapping $(\C^{n(j+1)}\times \C^c,0)\ni (t^{[j+1]},u)\mapsto (v_{j+1}(t^{[j+1]},u),\bar 
v_j(t^{[j]},u))$ is
equal to $N+n={\rm dim}\, \6M$.  The proof of Lemma~\ref{l:iterate1} is complete. \end{proof}

\subsection{Proof of Proposition~\ref{p:keyalg}} An application of Lemma~\ref{l:iterate1} for $j=0$ yields that for all multiindices $\mu \in \N^N$, each component of the mapping $\partial^\mu f \circ v_1$ is algebraic over the quotient field of $\bar{\6B}_0^f=\bar{\6D}^f_0$.
Therefore, the conjugate mapping $\partial^\mu \bar f \circ \bar v_1$ is algebraic over the quotient field of 
$\6D^f_0$; since for all $\nu\in \N^N$, each component of the mapping $\partial^\nu  f \circ  v_2$ 
is algebraic over the quotient field of $\bar{\6B}_1^f$
by Lemma~\ref{l:iterate1},  the  transitivity of being algebraic implies
that each component of the mapping $\partial^\nu  f \circ  v_2$ is algebraic over the quotient field of $\6D^f_1$. Proceeding inductively, we see that for every multiindex $\beta \in \N^N$ and every even integer $j$, the components of the map $\partial^\beta f \circ v_j$ are all algebraic over the quotient field of $\6D^f_{j-1}$ and for every odd integer $j$ the same algebraicity property holds over the quotient field of $\bar{\6D}^f_{j-1}$. Hence for $j=2(d+1)$,  there exists  positive integers $b,e$, convergent power series $\Psi_{\nu}\in \6N \{t^{[2(d+1)]},u\}[(\Lambda_\gamma)_{|\gamma|\leq b}]$, $\nu \in \{0,\ldots,e\}$, such that a nontrivial relation of the following form
\begin{equation}\label{e:almostthere}
\sum_{\nu=0}^{e}\Psi_{\nu}\left(t^{[2(d+1)]},u,((\partial^{\gamma}{f})(0,u))_{|\gamma|\leq
b})\right)(f\circ v_{2(d+1)}(t^{[2(d+1)]},u))^\nu=0
\end{equation}
holds for all $(t^{[2(d+1)]},u)$ sufficiently close to the origin.

In order to see that \eqref{e:almostthere} implies that each component of $f$ is algebraic over 
the quotient field of $\6A^f$, we need to invert the map $v_{2(d+1)}$. 
By the minimality criterion given in \cite{BER96, BERbook}, there exists points arbitrarily close to the origin in $\C^{2(d+1)n}$ such that $t^{[2(d+1)]}\mapsto v_{2(d+1)}(t^{[2(d+1)]},0)$ is of rank $N-c$ at those points with image the origin in $\C^{N}$. Pick a point $T^0\in \C^{2(d+1)n}$ with the above property and such that \eqref{e:almostthere} holds near $(T^0,0)\in \C^{2(d+1)n} \times \C^c$. From the rank theorem, there exists an algebraic mapping $\theta\colon (\C^N,0)\to (\C^{N-c},T^0)$ such that $v_{2(d+1)}(\theta (z,w,u),u)=(z,w,u)$. Composing \eqref{e:almostthere} with the obtained left inverse for $v_{2(d+1)}$, we obtain an identity of the following form
\begin{equation}\label{e:hereweare}
\sum_{\nu=0}^{e}\Psi_{\nu}\left(\theta (z,w,u),u,((\partial^{\gamma}{f})(0,u))_{|\gamma|\leq
b})\right)(f(z,w,u))^\nu=0,
\end{equation}
for $(z,w,u)\in \C^N$ sufficiently close to the origin. Furthermore, it is not difficult to see that it is possible to choose the mapping $\theta$ so that the obtained relation \eqref{e:hereweare} is still nontrivial. Hence \eqref{e:hereweare} shows that $f$ is algebraic over the quotient field of $\6A^f$. This finishes the proof of Proposition~\ref{p:keyalg}.

\section{Proof of Theorems~\ref{thm:main2},\ref{thm:main2app},\ref{thm:main3},
and \ref{t:main4}}\label{s:proofmt}

The goal of this section is to prove the following approximation theorem; Theorem~\ref{t:main4} (and therefore also Theorems~\ref{thm:main2},\ref{thm:main2app},\ref{thm:main3}) are
a direct consequence of this more general theorem, from which we will also deduce
Theorem~\ref{t:main2holfol} in Section~\ref{s:last}.

\begin{Thm}\label{t:technic} Let $(M,p)\subset \C^N$ be a germ of a 
	 holomorphically nondegenerate real-algebraic generic
submanifold, which is of constant orbit dimension at $p$. 
Assume that $M'\subset \C^N$ is a real-algebraic generic submanifold, and that 
$h\colon (\C^N,p) \to \C^N$ 
is  a germ of a holomorphic map  with $h(M)\subset M'$ and ${\rm Jac}\,h\not \equiv 0$. Then for every positive integer $\ell$, 
there exists a germ of 
an algebraic mapping $h^\ell \colon (\C^N,p)\to \C^N$ satisfying $h^\ell (M)\subset M'$ and that agrees with $h$ at 
$p$ up to order $\ell$. \end{Thm}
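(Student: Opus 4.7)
The plan is to reduce Theorem~\ref{t:technic} to a simultaneous application of the Popescu approximation theorem \cite{P} to a pair of polynomial systems --- one holomorphic on $\C^N$ and one real-algebraic on $\R^c$, where $c$ denotes the codimension of the CR orbits of $M$ at $p$. We may assume $p=0$. Using the normal coordinates $Z=(z,w,u)\in \C^n\times\C^{d-c}\times\C^c$ furnished by Lemma~\ref{l:coordconstantorbit}, the manifold $M$ is foliated near $0$ by the minimal generic submanifolds $M_u$ of \eqref{e:Mu}, each of which inherits holomorphic nondegeneracy from the holomorphic nondegeneracy of $M$.

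First I would show that $h$ satisfies condition $(\clubsuit)$ of Section~\ref{s:algprop}. For the minimal holomorphically nondegenerate fiber $M_0$, the reflection-principle and Segre-set techniques of \cite{Mir98, MirCAG} (to appear here as Proposition~\ref{p:mircag}) produce a nontrivial polynomial identity on the complexification $\6M_0$ relating $h$ to finite-order derivatives of $\bar h$. The parametric $u$-structure of the defining equation \eqref{e:conormal} of $M$ lets us transport this identity fiberwise to the full complexification $\6M$, which is exactly $(\clubsuit)$ applied to $f=h$. Invoking Proposition~\ref{p:keyalg} then delivers a nontrivial polynomial relation
\[
P\bigl(z,w,u,\6H(u),h(z,w,u)\bigr)=0,
\]
with $P$ algebraic and $\6H(u):=((\partial^\gamma h)(0,0,u))_{|\gamma|\leq m_0}$ for some integer $m_0$; this is the first polynomial system.

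The second ingredient is a real-algebraic polynomial system on $\R^c$ satisfied by the jet $\6H$ alone and encoding the constraint $h(M)\subset M'$. For each fixed $u\in\R^c$ near $0$, the restriction $z\mapsto h(z,w,u)$ sends the minimal holomorphically nondegenerate $M_u$ into $M'$, so the algebraicity theorem of \cite{BER96} applies fiberwise. Combined with the first system and an elimination of the $(z,w)$ variables against the defining equations of $M_u$ and $M'$, this yields real-algebraic identities $Q\bigl(u,\6H(u)\bigr)=0$ for $u\in\R^c$. Coupling the restriction of $P$ to $\R^N$ with $Q$ produces a joint real-algebraic system whose every solution pair automatically corresponds to a germ sending $M$ into $M'$.

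The conclusion would then follow by applying Popescu's refinement of Artin approximation \cite{P} to this coupled system with prescribed $\ell$-jet at the origin, producing algebraic solutions $(h^\ell,\6H^\ell)$ whose $\ell$-jets at $0$ match those of $(h,\6H)$; uniqueness of holomorphic extension from $\R^N$ then yields the desired complex-algebraic $h^\ell$ with $h^\ell(M)\subset M'$. The main obstacle is the first step: proving that $(\clubsuit)$ holds in the nowhere minimal, merely holomorphically nondegenerate setting --- where the finite-nondegeneracy arguments of \cite{BRZ2} are unavailable --- and this requires combining the Segre-mapping chain from \cite{BER96} on each fiber $M_u$ with the jetwise algebraic-dependence bookkeeping of \cite{Mir98, MirCAG}. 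A secondary delicate point is to ensure that the elimination producing $Q$ preserves enough structure for Popescu's theorem to deliver an algebraic holomorphic (rather than merely real-analytic) solution upon recoupling with $P$.
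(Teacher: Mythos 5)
Your high-level plan tracks the paper's architecture closely: verify $(\clubsuit)$ via Proposition~\ref{p:mircag}, invoke Proposition~\ref{p:keyalg} to get the polynomial relation between $h$ and the $u$-jet $\6H(u)$, build a second real-algebraic system in $u$ alone, and then apply Popescu. Up to and including the first polynomial system, the proposal matches the paper's proof.

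The gap is in the final step. You write that the coupled system is one ``whose every solution pair automatically corresponds to a germ sending $M$ into $M'$.'' This is false, and it is precisely the point where the paper has to work hardest. Solutions $(F,S)$ of the first system give holomorphic maps $F$ which need not send $M$ to $M'$ at all; the polynomial relations in \eqref{e:finaldependence} only pin down $F$ up to finitely many branches of an algebraic correspondence. The second system, as the paper constructs it, does not by itself force $F(M)\subset M'$ either. What the paper does instead is build, for each defining polynomial $r'_j$ of $M'$, a resultant-type polynomial $\6K^j=\sum_\nu B^j_\nu X^\nu$ (Lemma~\ref{l:crucial}) with the universal property that $\6K^j(Z,\zeta,S(u),\bar S(v);r'_j(F(Z),\bar F(\zeta)))\equiv 0$ for \emph{any} solution pair of the first system, then defines $\nu^j_0$ as the smallest index with $B^j_{\nu^j_0}(Z,\zeta,\6H(u),\bar\6H(v))\not\equiv 0$ on $\6M$, and encodes the vanishing of the lower-index coefficients $B^j_\nu|_{\6M}$, $\nu<\nu^j_0$, as the real-algebraic system $\Theta^j_{\gamma,\nu}(u,\Lambda,\bar\Lambda)=0$. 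The conclusion that the approximate $h^\ell$ maps $M$ into $M'$ is then \emph{not} automatic: Lemma~\ref{l:final} argues by contradiction that if $r'_1(h^\ell,\overline{h^\ell})\not\equiv 0$ on $\6M$, then from $\6K^1\equiv 0$ and $B^1_\nu|_{\6M}=0$ for $\nu<\nu^1_0$ one can solve for $B^1_{\nu^1_0}(Z,\zeta,W^\ell,\overline{W^\ell})|_{\6M}$ as a multiple of $r'_1(h^\ell,\overline{h^\ell})|_{\6M}$, which vanishes to order $\ell$ because $h^\ell$ agrees with $h$ to order $\ell$; letting $\ell\to\infty$ and using $W^\ell\to\6H$ in the Krull topology forces $B^1_{\nu^1_0}(Z,\zeta,\6H,\bar\6H)|_{\6M}\equiv 0$, contradicting the choice of $\nu^1_0$. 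This argument depends essentially on the order of approximation, so the claim that solutions ``automatically'' send $M$ into $M'$ misses the whole point.

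Your sketch of the second system is also too thin to carry the argument: fiberwise application of \cite{BER96} tells you each $z\mapsto h(z,\cdot,s)$ is algebraic, but does not by itself produce a polynomial system in $(u,\Lambda,\bar\Lambda)$ which, when solved, forces approximate solutions toward $M'$. The specific structure of Lemma~\ref{l:crucial} --- in particular that the $B^j_\nu$ depend only on $(Z,\zeta,\Lambda,\Gamma)$ and not on $F$ --- is what makes the vanishing conditions expressible as equations in $(u,\Lambda,\bar\Lambda)$ alone via parametrization of $M$ and differentiation in the leaf directions, and Noetherianity of $\6N^{\R}\{u\}[\Lambda,\bar\Lambda]$ is what makes the resulting system finite. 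These are not mere bookkeeping details; without them the proof does not close.
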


In order to prove Theorem~\ref{t:technic}, 
we
assume without loss of generality that $p=0$, $p'=h(p)=0$, and that normal coordinates $Z=(z,w,u)$ have been chosen for $M$
near $0$ as in Lemma~\ref{l:coordconstantorbit}. 
We let $h\colon (\C^N,0)\to (\C^N,0)$ be a germ of a holomorphic map of
generic full rank as in the statement of Theorem~\ref{t:technic}. 
We will need the following result from \cite{MirCAG}
that follows from an inspection of the proof of \cite[Proposition~4.6]{MirCAG}.

\begin{Pro}\label{p:mircag}
Let $M$, $M'$, and $h$ be as above.  Then the mapping $h$ satisfies assumption $(\clubsuit)$ given 
in Section {\rm \ref{s:algprop}}.
\end{Pro}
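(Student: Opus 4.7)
The strategy closely follows the reflection principle developed by the second author in \cite{Mir98, MirCAG}, adapted to the present setting where $M$ is holomorphically nondegenerate but not necessarily minimal. The plan is as follows.

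First, I would choose a finite family of polynomial defining functions for $M'$ near $h(0)=0$, which is possible because $M'$ is real-algebraic. Writing them as $\rho'_j(Z',\overline{Z'})$, $j=1,\ldots,d'$, the hypothesis $h(M)\subset M'$ yields, after complexification, the reflection identities
\[
\rho'_j(h(Z),\bar h(\zeta)) = 0 \ \text{on}\ \6M,\qquad j=1,\ldots,d'.
\]
In the normal coordinates fixed by Lemma~\ref{l:coordconstantorbit}, $\6M$ is parameterized by $(Z,\chi)\in\C^N\times\C^n$ through $\sigma=\bar\Theta(\chi,z,(w,u))$, so the derivations $\partial^\gamma/\partial\chi^\gamma$ are tangent to $\6M$. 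Applying them to the identities above produces, for each $\gamma\in\N^n$, a polynomial relation
\[
\Phi_{j,\gamma}\bigl(Z,\zeta,h(Z),(\partial^\beta \bar h(\zeta))_{|\beta|\le|\gamma|}\bigr) = 0 \ \text{on}\ \6M,
\]
in which $\Phi_{j,\gamma}$ is polynomial in the components of $h(Z)$ and in the derivatives $\partial^\beta \bar h(\zeta)$, with coefficients algebraic in $(Z,\zeta)$.

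The next step is to extract from this a priori infinite system a nontrivial univariate polynomial relation in each component $h_s(Z)$ whose coefficients lie in $\6R^h_{\ell_0}$ for some fixed $\ell_0$. For this I would invoke Stanton's characterization of holomorphic nondegeneracy \cite{St}, which guarantees that only finitely many multi-indices $\gamma$ (whose bound depends on the Levi type of $M$ at $0$) are needed so that the $\Phi_{j,\gamma}$ separate the unknowns $h(Z)^\alpha$. Viewing the $\Phi_{j,\gamma}$ with $|\gamma|\le\ell_0$ as polynomial equations in the monomials of the components of $h(Z)$, a Cramer/resultant-type elimination then produces the desired univariate polynomial in each $h_s(Z)$.

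The main obstacle is to verify the nontriviality of the leading coefficient $\delta_{k_s,s}$ on $\6M$. This is where the hypothesis ${\rm Jac}\,h\not\equiv 0$ enters decisively: it forces $M'$ to be itself holomorphically nondegenerate at $h(0)$, so that the reflection construction on the target side is not degenerate, and it ensures that the elimination step does not collapse to the trivial relation. Carrying out this bookkeeping of nonvanishing, which uses in an essential way the parameterization of $\6M$ by iterated Segre maps and the identities \eqref{e:identities}, is precisely the content of the inspection of \cite[Proposition~4.6]{MirCAG} referred to in the statement.
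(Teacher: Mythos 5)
Your proposal correctly reconstructs the reflection-principle mechanism underlying the cited \cite[Proposition~4.6]{MirCAG} (complexification of the target defining equations, differentiation along CR directions tangent to $\6M$, and elimination relying on the holomorphic nondegeneracy that $M'$ inherits from $M$ via ${\rm Jac}\,h\not\equiv 0$), and since the paper's own treatment of this proposition consists solely of that citation, the two approaches agree. A minor misattribution: the iterated Segre maps and the identities \eqref{e:identities} that you invoke for the nontriviality bookkeeping are the machinery of the subsequent Proposition~\ref{p:keyalg}, not of the derivation of $(\clubsuit)$ itself; the nontriviality of the leading coefficient at this stage is secured directly from the generic full-rank condition on $(\bar\Theta'_\beta)_\beta$ afforded by holomorphic nondegeneracy of $M'$ (compare Lemma~\ref{l:substitute}).
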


Combining Proposition~\ref{p:mircag} and Proposition~\ref{p:keyalg} we get that each component of the mapping $h$ is 
algebraic over the quotient field of the ring $\6A^h$ defined in Definition~\ref{d:ah}, i.e. 
there exists an integer $m_0$ and, 
for every integer $s\in \{1,\ldots,N\}$, an integer $k_s$, and
a family of convergent power series 
$P_{r,s}(Z,(\Lambda_{\alpha})_{|\alpha|\leq m_0})\in \6N \{Z\}[(\Lambda_{\alpha})_{|\alpha|\leq m_0}]$, $0\leq r\leq k_s$, such that 
\begin{equation}\label{e:finaldependence}
\sum_{r=0}^{k_s}P_{r,s}(Z,(\partial^{|\alpha|}h(0,u))_{|\alpha|\leq m_0}) (h_s(Z))^r=0,
\end{equation}
with 
\begin{equation}\label{e:nonzero}
P_{k_s,s}(Z,(\partial^{|\alpha|}h(0,u))_{|\alpha|\leq m_0})\not \equiv 0.
\end{equation}

For ease of notation, we write 
$\6H(u):=(\partial^{|\alpha|}h(0,u))_{|\alpha|\leq m_0}$. Note that \eqref{e:finaldependence} simply means
that the convergent power series mapping $(h,\6H)$ satisfies a certain polynomial system with algebraic coefficients;
however, solutions of this system need not necessarily give rise to holomorphic mappings sending $(M,0)$ to $(M',0)$. The
goal of the subsequent paragraphs is to build up an additional system of polynomial equations with real-algebraic coefficients
satisfied by the mapping $\6H$ that will allow us to approximate the mapping $h$ by algebraic mappings in the
Krull topology.

\subsection{Construction of an appropriate system of polynomial equations} In what follows, we use the following notation
to denote coordinates on jet spaces: with $m_0$ as given above, we write $\Lambda=(\Lambda_\alpha)_{|\alpha|\leq m_0}$
where each $\Lambda_\alpha \in \C^N$ for $\alpha \in \N^N$. Similarly, we write
$\Gamma=(\Gamma_\alpha)_{|\alpha|\leq m_0}$ with $\Gamma_\alpha \in \C^N$. Let us also recall that coordinates in the
source space $\C^N$ split as  $Z=(z,w,u)\in \C^n\times \C^{d-c}\times \C^c$ and similarly for $\zeta \in \C^N$, where we
write $\zeta=(\chi,\tau,v)\in \C^n\times \C^{d-c}\times \C^c$.

The following lemma will allow us 
to construct a suitable system of polynomial equations satisfied by the power series $\6H(u)$. 
Its proof is  analogous to that of \cite[Lemma~6.2]{MMZ3}.

\begin{Lem}\label{l:crucial}
For every real-valued polynomial $r'=r'(Z',\bar Z')$, $Z'\in \C^N$, there exists a nontrivial polynomial 
\[ \6K=\6K (Z,\zeta,\Lambda,\Gamma;X) =\sum_{\nu=0}^\delta B_\nu (Z,\zeta,\Lambda,\Gamma)X^\nu
\in \6N \{Z,\zeta\} [\Lambda,\Gamma][X]\]
with the following properties:
\begin{enumerate}
\item[(i)]   $B_{\delta}(Z,\zeta,\6H(u),\bar{\6H}(v))\not \equiv 0$ for $(Z,\zeta)\in \6M$ near the origin.
\item[(ii)] for every pair of convergent power series mappings 
$S(u)=(S_\alpha (u))_{|\alpha|\leq m_0}$ and $F(Z)=(F_1(Z),\ldots,F_N(Z))$ which  satisfies 
\begin{equation}\label{e:ouf}
\sum_{r=0}^{k_s}P_{r,s}(Z,S(u)) (F_s(Z))^r=0
\end{equation}
for every 
$s\in \{1,\ldots,N\}$, 
with 
\begin{equation}\label{e:nonzerouf}
P_{k_s,s}(Z,S(u))\not \equiv 0,
\end{equation}
we have that 
\begin{equation}\label{e:crucialid}
\6K(Z,\zeta,S(u),\bar S(v);r'(F(Z),\bar F(\zeta)))\equiv 0,
\end{equation}
for $(Z,\zeta)\in \C^{2N}$ sufficiently close to the origin.
\end{enumerate}
\end{Lem}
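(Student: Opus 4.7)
My plan is to construct $\6K$ by a generic elimination procedure applied to the polynomial system \eqref{e:ouf} together with its conjugate, treating $F(Z)$ and $\bar F(\zeta)$ as abstract algebraic indeterminates satisfying those relations, and then to exploit the structure of the denominators in order to secure property (i).

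\emph{Construction.} I would introduce new indeterminates $y = (y_1,\ldots,y_N)$ and $\eta = (\eta_1,\ldots,\eta_N)$ for the generic values of $F(Z)$ and $\bar F(\zeta)$, and work inside $R[y,\eta]/I$, where $R := \6N\{Z,\zeta\}[\Lambda,\Gamma]$ and $I$ is the ideal generated by the $2N$ polynomials
\[\sum_{r=0}^{k_s}P_{r,s}(Z,\Lambda)\,y_s^r \quad\text{and}\quad \sum_{r=0}^{k_s}\bar P_{r,s}(\zeta,\Gamma)\,\eta_s^r,\qquad s=1,\ldots,N.\]
Inverting the leading coefficients $P_{k_s,s}$ and $\bar P_{k_s,s}$ turns each of these relations into a monic polynomial equation, making each $y_s$ and each $\eta_s$ integral over the corresponding localization of $R$. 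Consequently $X := r'(y,\eta)$ is algebraic over $K := \mathrm{Frac}(R)$; taking its minimal polynomial over $K$ and clearing denominators yields
\[\6K(Z,\zeta,\Lambda,\Gamma;X) = \sum_{\nu=0}^\delta B_\nu(Z,\zeta,\Lambda,\Gamma)\,X^\nu \in R[X].\]
Since the only denominators that appear are products of powers of the $P_{k_s,s}$ and $\bar P_{k_s,s}$ (and $R$ is a unique factorization domain, so Gauss's lemma applies), one can arrange that $B_\delta$ is itself a product $\prod_s P_{k_s,s}^{a_s}\,\bar P_{k_s,s}^{b_s}$ for appropriate nonnegative integers $a_s, b_s$.

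\emph{Property (ii).} For any pair $(S,F)$ satisfying \eqref{e:ouf}--\eqref{e:nonzerouf}, condition \eqref{e:nonzerouf} guarantees that the specialized leading coefficients $P_{k_s,s}(Z,S(u))$ and $\bar P_{k_s,s}(\zeta,\bar S(v))$ remain nonzero in $\C\{Z,\zeta\}$. This is exactly what allows every step of the above construction to be specialized via $\Lambda\mapsto S(u)$, $\Gamma\mapsto \bar S(v)$, $y\mapsto F(Z)$, $\eta\mapsto \bar F(\zeta)$, and the resulting identity is precisely \eqref{e:crucialid}.

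\emph{The main obstacle} is property (i), and the structural choice of $B_\delta$ above handles it. Indeed,
\[B_\delta(Z,\zeta,\6H(u),\bar{\6H}(v)) = \prod_s P_{k_s,s}(Z,\6H(u))^{a_s}\,\bar P_{k_s,s}(\zeta,\bar{\6H}(v))^{b_s},\]
and by \eqref{e:nonzero} together with its conjugate each factor is a nonzero element of $\C\{Z\}$ or $\C\{\zeta\}$, respectively. Because the projections $\6M \to \C^N_Z$ and $\6M \to \C^N_\zeta$ are both submersions at the origin (since $\6M$ is a graph both over $(Z,\chi)$ and, after solving $\eta = \Theta(z,\chi,\sigma)$, over $(z,\zeta)$), the pullback maps $\C\{Z\}\hookrightarrow \C\{\6M\}$ and $\C\{\zeta\}\hookrightarrow \C\{\6M\}$ are injective, so every such factor remains nonzero on $\6M$. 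Since $\6M$ is a smooth (hence irreducible) complex germ, $\C\{\6M\}$ is an integral domain, so the product does not vanish identically on $\6M$, establishing (i). This mirrors the scheme of \cite[Lemma~6.2]{MMZ3}.
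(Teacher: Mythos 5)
Your construction is essentially the same elimination argument the paper carries out, only phrased in the language of integral extensions and minimal polynomials rather than explicitly via Newton's symmetric functions. The paper writes the product $\prod\bigl(X-r'(\sigma^{(1)}_{n_1},\dots,\bar\sigma^{(N)}_{l_N})\bigr)$ over all choices of roots, applies Newton's theorem to see that its coefficients are polynomials in the ratios $P_{r,s}/P_{k_s,s}$ and their conjugates, and multiplies by a power of $I=\prod_s P_{k_s,s}\bar P_{k_s,s}$ to clear denominators, so that $B_\delta$ is a power of $I$; your route packages this as ``take the minimal polynomial of $r'(y,\eta)$ in $A_K:=K[y,\eta]/I_K$ and clear denominators.'' Both proofs then verify (i) identically, via the injectivity of $\C\{Z\}\hookrightarrow\C\{\6M\}$ and $\C\{\zeta\}\hookrightarrow\C\{\6M\}$ and the irreducibility of $\6M$. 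Two places in your write-up deserve tightening. First, since $A_K$ is not a domain, ``minimal polynomial of $X=r'(y,\eta)$ over $K$'' should be read as the monic generator of the kernel of $K[T]\to A_K$; the claim that its coefficients live in the localization $R'=R[(\prod_s P_{k_s,s}\bar P_{k_s,s})^{-1}]$ follows not really from Gauss's lemma alone but from the fact that $R'$ is integrally closed (it is a localization of the UFD $\6N\{Z,\zeta\}[\Lambda,\Gamma]$), $X$ is integral over $R'$ (because $A=R'[y,\eta]/I'$ is finite free over $R'$), and a monic divisor in $K[T]$ of a monic polynomial over an integrally closed domain again has coefficients in that domain. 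Second, for (ii) you need the stronger statement that $\6K\bigl(r'(y,\eta)\bigr)$ lies in the ideal $I\subset R[y,\eta]$, not merely that $m(X)=0$ in $A_K$; this requires passing from $A_K$ back to $A$, which works because $A$ is free (hence torsion-free) over $R'$, so $A\hookrightarrow A_K$. Once that is in place, (ii) follows by applying the ring map $R[y,\eta]\to\C\{Z,\zeta\}$ given by $\Lambda\mapsto S(u)$, $\Gamma\mapsto\bar S(v)$, $y_s\mapsto F_s(Z)$, $\eta_s\mapsto\bar F_s(\zeta)$, under which each generator $h_i$ of $I$ vanishes by \eqref{e:ouf}; in fact \eqref{e:nonzerouf} is not used at this step, contrary to what your last paragraph suggests, but this does not affect the correctness of the conclusion.
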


\begin{proof} 
For every $s\in  \{1,\ldots,N\}$, denote by $T_s$ and $Y_s$ new indeterminates and consider 
\begin{equation}\label{e:new}
\6Q_s (Z,\Lambda;T_s):=\sum_{r=0}^{k_s}P_{r,s}(Z,\Lambda)T_s^r,\quad {\6R}_s(\zeta,\Gamma;Y_s):=\sum_{r=0}^{k_s}\bar{P}_{r,s}(\zeta,\Gamma)Y_s^r.
\end{equation}
$\6Q_s$ and ${\6R}_s$ are polynomials with algebraic coefficients.
Let $\Delta$ be a sufficiently small polydisc in $\C^N$ centered at the origin so that each $P_{r,s}$ is holomorphic in $\Delta \times \C^\kappa$, where $\kappa:= N {\rm Card}\, \{\alpha \in \N^N:|\alpha|\leq m_0\}$. For each $s=1,\ldots,N$, denote by $L_s$ (resp.\ $\bar L_s$) the zero set of $P_{k_s,s}$ (resp.\ $\bar{P}_{k_s,s}$) in $\Delta \times \C^\kappa$ and let $E:=\cup_{s=1}^N(L_s\cup \bar{L}_s)$. For every $(Z,\Lambda)\in (\Delta \times \C^\kappa)\setminus E$, every $(\zeta,\Gamma)\in (\Delta \times \C^\kappa)\setminus E$ and every $s\in \{1,\ldots,N\}$, denote by $\sigma^{(s)}_1(Z,\Lambda),\ldots,\sigma^{(s)}_{k_s}(Z,\Lambda)$ the $k_s$ roots of the polynomial $\6Q_s$ (viewed as a polynomial in $T_s$). Hence, for every $(\zeta,\Gamma)\in (\Delta \times \C^\kappa)\setminus E$ and every $s\in \{1,\ldots,N\}$,
$\bar{\sigma}^{(s)}_1(\zeta,\Gamma),\ldots,\bar{\sigma}^{(s)}_{k_s}(\zeta,\Gamma)$ are 
the $k_s$ roots of the polynomial $\6R_s$.

As in \cite[Lemma~6.2]{MMZ3}, consider for $(Z,\Lambda)\in (\Delta \times \C^\kappa)\setminus E$ and $(\zeta,\Gamma)\in (\Delta \times \C^\kappa)\setminus E$ the following polynomial in $X$
\begin{multline}\label{e:thepolynomial}
W(Z,\zeta,\Lambda, \Gamma;X):=\\
\prod_{\substack{ 1\leq l_j \leq k_j\\1\leq n_j \leq N_j}}
\Big(X-
r'\big(\sigma^{(1)}_{n_1}(Z,\Lambda),\ldots,\sigma^{(N)}_{n_{N}}(Z,\Lambda),\1 \sigma^{(1)}_{l_1}(\zeta,\Gamma),\ldots, \1
\sigma^{(N)}_{l_{N}}(\zeta,\Gamma)\big)\Big).
\end{multline}

By Newton's theorem on symmetric polynomials, it follows that \eqref{e:thepolynomial} can be rewritten as
\begin{equation}\label{e:anrfwf}
W(Z,\zeta, \Lambda,\Gamma;X)=X^\delta+\sum_{\nu <\delta}A_\nu(Z,\Lambda,\zeta,\Gamma)X^\nu,
\end{equation}
where $\delta:=\prod_{j=1}^Nk_i^2$ and where $A_\nu$ is of the following form
\begin{equation}\label{e:coef}
A_\nu(Z,\zeta,\Lambda,\Gamma):=C_\nu \left(  \left( \frac{P_{r,s}(Z,\Lambda)}{P_{k_s,s}(Z,\Lambda)},\frac{\bar{P}_{r,s}(\zeta,\Gamma)}{\bar{P}_{k_s,s}(\zeta,\Gamma)}
\right)_{\substack{1\leq r\leq k_s \\ 1\leq s\leq N}}\right),
\end{equation}
where each $C_\nu$ is a polynomial of its arguments depending only on $r'$. 
Let 
\begin{equation}\label{e:I}
I(Z,\Lambda,\Lambda,\Gamma):=\prod_{s=1}^NP_{k_s,s}(Z,\Lambda)P_{k_s,s}(\zeta,\Gamma).
\end{equation}
For a suitable integer $\varpi$,
$\6K(Z,\zeta,\Lambda,\Gamma;X):=I^\varpi\cdot W\in \6N\{Z,\zeta\}[\Lambda,\Gamma][X]$. 
We claim that the obtained polynomial $\6K$ satisfies all desired properties. 
The construction of the polynomial $\6K$ implies (ii). 
To prove statement (i), we note that the term $B_\delta$ is a sufficiently high power of the function $I$ defined in 
\eqref{e:I}. Hence if $B_\delta(Z,\zeta,\6H(u),\bar{\6H}(v))\equiv 0$ for $(Z,\zeta)\in \6M$ near $0$, then 
there exists $s\in \{1,\ldots,N\}$ such that either $P_{k_s,s}(Z,\6H(u))\equiv 0$ for $Z\in \C^N$ close to $0$, or 
$\bar{P}_{k_s,s}(\zeta,\bar{\6H}(v))\equiv 0$ for $\zeta \in \C^N$ close to $0$, which is 
not the case by  
\eqref{e:nonzero}. The proof of Lemma~\ref{l:crucial} is complete.
\end{proof}

Next, since $M'$ is a real-algebraic generic submanifold of codimension $d$ through the origin, we may choose  $d$ real-valued polynomials $(r_1',\ldots,r_d')$ such that $M'$ is given near the origin by the zero set of these $d$ polynomials.
Applying Lemma~\ref{l:crucial} for each polynomial $r_j'$, $j=0,\ldots,d$, we obtain a corresponding polynomial 
\[\6K^j(Z,\zeta,\Lambda,\Gamma;X):=\sum_{\nu=0}^{\delta_j}B_\nu^jX^\nu\]
with each $B_\nu^j(Z,\zeta,\Lambda,\Gamma)\in \6N\{Z,\zeta\}[\Lambda,\Gamma][X]$, $1\leq \nu\leq \delta_j$.

For every $j\in \{1,\ldots,d\}$, define 
\begin{equation}\label{e:nuj}
\nu^j_0=\min \{ \nu \in \{0,\ldots,\delta_j\} \colon B^j_{\nu}(Z,\zeta,\6H(u),\bar{\6H} (v))\not \equiv 0\  {\rm on}\  \6M\ {\rm near}\ 0 \}.
\end{equation}
Since each polynomial $\6K^j$ satisfies  conclusion (i) of Lemma~\ref{l:crucial}, 
we know that such a $\nu^j_0$ exists (it can be shown  that each $\nu^j_0>0$ but this is not needed in what follows).

Using Lemma~\ref{l:coordconstantorbit}, we  choose  a real-algebraic parametrization $(\R^{2N-d-c}\times \R^c,0)\ni (y,u)\mapsto \varphi (y,u)\in \C^N$ of $M$ near $0$ which satisfies 
that for each fixed $u\in \R^c$ sufficiently close to the origin, the mapping $(\R^{2N-d-c},0)\ni y\mapsto 
\varphi (y,u)$ parametrizes the manifold $M_u$ as defined in \eqref{e:Mu} near the origin. Hence for all $(y,u)\in 
\R^{2N-d-c}\times \R^c$ sufficiently close to the origin, we have
\begin{equation}\label{e:formalsatisfied}
B^j_\nu (\varphi (y,u),\overline{\varphi (y,u)},\6H(u),\overline{\6H(u)})=0,\quad j=0,\ldots,d,\quad   \nu=0,\ldots\nu^j_0-1.
\end{equation}
For every $j\in \{0,\ldots,d\}$, for every $\nu<\nu^j_0$, and for every multiindex $\gamma \in \N^{2N-d-c}$, define
\begin{equation}\label{e:firstsystem}
\Theta_{\gamma,\nu}^j(u,\Lambda,\overline{\Lambda}):=
\frac{\partial^{|\gamma|}}{\partial y^\gamma}
\left[B^j_\nu \left(\varphi (y,u),\overline{\varphi (y,u)},\Lambda,\overline{\Lambda}\right)\right]
\biggr |_{y=0}\in \6N^{\R}\{u\}[\Lambda,\overline{\Lambda}].
\end{equation}
Since the ring $\6N^{\R}\{u\}[\Lambda,\overline{\Lambda}]$ is noetherian, there exists an integer $n_0$ such that the ideal generated by the $\Theta_{\gamma,\nu}^j(u,\Lambda,\overline{\Lambda})$ for $\gamma \in  \N^{2N-d-c}$, $\nu<\nu^j_0$, $j\in \{0,\ldots,d\}$ coincides with that generated by the $\Theta_{\gamma,\nu}^j(u,\Lambda,\overline{\Lambda})$ for $|\gamma|\leq n_0$, $\nu<\nu^j_0$, $j\in \{0,\ldots,d\}$. 
We observe that by  the construction of the mappings $\Theta_{\gamma,\nu}^j$ the following Lemma holds.

\begin{Lem}\label{l:tobeused}
For $\gamma \in  \N^{2N-d-c}$, $\nu<\nu^j_0$, $j\in \{0,\ldots,d\}$, let $\Theta_{\gamma,\nu}^j$ be defined as above. Then if $(\R^c,0)\ni u\mapsto S(u)\in \C^\kappa$ is the germ of a real-analytic mapping satisfying
$$ \Theta_{\gamma,\nu}^j(u,S(u),\overline{S(u)})=0$$
for $u\in \R^c$ close to the origin and  $|\gamma|\leq n_0$, $\nu<\nu^j_0$, $j\in \{0,\ldots,d\}$, then $S$ satisfies
$$ B^j_\nu (Z,\overline{Z},S(u),\overline{S(u)})=0,\quad \nu=0,\ldots\nu^j_0-1,\quad   j=0,\ldots,d,$$
for all $Z=(z,w,u)\in M$ sufficiently close to the origin.
\end{Lem}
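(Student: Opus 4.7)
The plan is to use two ingredients: (i) the Noetherianity that defined the integer $n_0$, and (ii) the trivial observation that $\Lambda$ and $\overline{\Lambda}$ are independent of $y$, so the substitution $\Lambda=S(u)$, $\overline{\Lambda}=\overline{S(u)}$ commutes with the $y$-derivatives defining $\Theta^j_{\gamma,\nu}$. Together these will force the real-analytic function $(y,u) \mapsto B^j_\nu\bigl(\varphi(y,u),\overline{\varphi(y,u)},S(u),\overline{S(u)}\bigr)$ to have vanishing Taylor series in $y$ at the origin, hence to vanish identically; pushing forward by the parametrization $\varphi$ will then give the desired conclusion on $M$.

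First I would invoke the Noetherian property of $\6N^{\R}\{u\}[\Lambda,\overline{\Lambda}]$. By the defining property of $n_0$, for every $j\in\{0,\ldots,d\}$, $\nu<\nu^j_0$ and every multiindex $\gamma\in\N^{2N-d-c}$, there exist finitely many elements $c^{\gamma,\nu,j}_{\gamma',\nu',j'}(u,\Lambda,\overline{\Lambda})\in \6N^{\R}\{u\}[\Lambda,\overline{\Lambda}]$ such that
$$\Theta^j_{\gamma,\nu}(u,\Lambda,\overline{\Lambda}) = \sum_{|\gamma'|\leq n_0,\ \nu'<\nu^{j'}_0} c^{\gamma,\nu,j}_{\gamma',\nu',j'}(u,\Lambda,\overline{\Lambda})\,\Theta^{j'}_{\gamma',\nu'}(u,\Lambda,\overline{\Lambda}).$$
Substituting $\Lambda=S(u)$ and $\overline{\Lambda}=\overline{S(u)}$ is legitimate since each $c^{\gamma,\nu,j}_{\gamma',\nu',j'}$ is real-analytic in $u$ and polynomial in $(\Lambda,\overline{\Lambda})$; by hypothesis, the terms indexed by $|\gamma'|\leq n_0$ on the right-hand side all vanish for $u\in\R^c$ close to $0$. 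Consequently $\Theta^j_{\gamma,\nu}(u,S(u),\overline{S(u)})\equiv 0$ for \emph{every} $\gamma\in\N^{2N-d-c}$, every $\nu<\nu^j_0$, and every $j\in\{0,\ldots,d\}$.

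Next, because $\Lambda$ and $\overline{\Lambda}$ do not involve $y$, substitution commutes with $\partial/\partial y$ and \eqref{e:firstsystem} yields
$$\Theta^j_{\gamma,\nu}(u,S(u),\overline{S(u)}) = \frac{\partial^{|\gamma|}}{\partial y^\gamma}\Bigl[B^j_\nu\bigl(\varphi(y,u),\overline{\varphi(y,u)},S(u),\overline{S(u)}\bigr)\Bigr]\bigg|_{y=0}.$$
The previous step therefore shows that every Taylor coefficient in $y$ at the origin of the real-analytic function $G^j_\nu(y,u):=B^j_\nu(\varphi(y,u),\overline{\varphi(y,u)},S(u),\overline{S(u)})$ vanishes. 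By real-analyticity in $y$ we conclude $G^j_\nu\equiv 0$ near $0\in\R^{2N-d-c}\times\R^c$. Since $y\mapsto\varphi(y,u)$ parametrizes $M_u$ near the origin, and $M$ coincides locally with the union of the slices $M_u$, this yields $B^j_\nu(Z,\bar Z,S(u),\overline{S(u)})=0$ for every $Z=(z,w,u)\in M$ near the origin, which is the desired conclusion.

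The argument is essentially formal once the setup is in place; no serious obstacle is expected. The only mild subtlety is notational, namely keeping $\Lambda$ and $\overline{\Lambda}$ as independent indeterminates in the ring $\6N^{\R}\{u\}[\Lambda,\overline{\Lambda}]$, with the ``conjugation'' acquiring genuine analytic meaning only after substituting the complex-conjugate datum $\overline{S(u)}$ for real $u$.
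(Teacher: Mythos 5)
Your proof is correct and spells out precisely what the paper leaves to the reader when it writes ``we observe that by the construction of the mappings $\Theta_{\gamma,\nu}^j$ the following Lemma holds.'' The two ingredients you isolate — the Noetherian generation that gives vanishing of all $\Theta^j_{\gamma,\nu}(u,S(u),\overline{S(u)})$ from those with $|\gamma|\leq n_0$, and the observation that substituting $\Lambda=S(u)$, $\overline{\Lambda}=\overline{S(u)}$ commutes with $\partial^{|\gamma|}/\partial y^\gamma|_{y=0}$, so the full $y$-Taylor series of $G^j_\nu(y,u)$ vanishes and real-analyticity finishes — are exactly the intended argument.
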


We can now define the crucial system of complex-valued real-analytic equations 
near the origin in $\R_x^n\times \R_t^{d-c}\times \R_u^c$ to carry out the approximation:

\begin{equation}\label{e:system}
\begin{cases}
\begin{aligned}
\sum_{r=0}^{k_s}P_{r,s}(x,t,u,\Lambda)\,  T_s^r&=0, &&s=1,\ldots,N\\
\Theta_{\gamma,\nu}^j(u,\Lambda,\overline{\Lambda})&=0, && |\gamma|\leq n_0,\ \nu<\nu^j_0,\ j\in \{0,\ldots,d\}.
\end{aligned}
\end{cases}
\end{equation}
This system is obviously polynomial in $T=(T_1,\ldots,T_N)$ and $(\Lambda,\overline{\Lambda})$ with coefficients that are real-algebraic functions in $\R^N$. Furthermore, using \eqref{e:finaldependence}, \eqref{e:formalsatisfied} and \eqref{e:firstsystem}, we know that \[\begin{aligned}
	T&=(T_1,\ldots,T_N)=(h_1(x,t,u),\ldots,h_N(x,t,u))=h(x,t,u),\\ \Lambda&=\6H(u)=(\partial^{|\alpha|}h(0,u))_{|\alpha|\leq m_0}
\end{aligned}
\] is a complex-valued real-analytic solution of the system \eqref{e:system}.

\subsection{Appropriate solutions of the system and end of the proof of Theorem~\ref{t:technic}}
At this point, the next natural step would be to apply an approximation theorem due to Artin \cite{A69} providing, 
for every integer $\ell$,  complex-valued real-algebraic solutions $h^\ell(x,t,u)$ and $W^\ell(x,t,u)$ of the 
system \eqref{e:system}, that agree up to order $\ell$ at the origin with the mappings $h(x,t,u)$ and $\6H(u)$ respectively. 
This strategy is not sufficient 
because in order to show that the complexification of the obtained mappings
$h^\ell$ sends a neighbhorhood of $0$ in $M$ to $M'$, we need  the mappings $W^\ell$  to be 
independent of $(x,t)$ (as is the case for the original solution). The question whether this kind  of approximation
is possible actually dates back to Artin's paper \cite{A69}; it became known as the ``subring condition''. 
A positive answer in the algebraic case has been provided  by Popescu \cite{P}, and this theorem allows us to get solution 
mappings $W^\ell$ 
which depend only on $u$. We shall only state a version of Popescu's theorem needed for the purpose of this paper.

\begin{Thm}\label{t:popescu} $(${\rm Popescu  \cite{P}}$)$ Let $(\omega,\xi)\in \R^k\times \R^q$, $k,q\geq 1$, 
	and a polynomial mapping  $\Phi(X,Y)=(\Phi_1(X,Y),\ldots,\Phi_r(X,Y))$
	with $\Phi_j(X,Y) \in \6N^{\R}\{\omega,\xi\}[X,Y]$ for $j=1,\dots,r$, 
	$X=(X_1,\ldots,X_p)$, $Y=(Y_1,\ldots,Y_m)$. Suppose that there exists formal power series mappings 
	$x (\omega) \in (\R [[\omega]])^p$, $y (\omega,\xi)\in (\R[[\omega,\xi]])^m$ safisfying $\Phi (x,y )=0$. 
	Then for every integer $\ell$, there exists $x^\ell \in (\6N^{\R}\{\omega\})^p$, $y^\ell \in (\6N^{\R}\{\omega,\xi\})^m$
satisfying  $\Phi (x^\ell,y^\ell)=0$ such that $x^\ell$ and $y^\ell$ agree at $0$ up to order $\ell$ with $x$ and $y$ respectively.
\end{Thm}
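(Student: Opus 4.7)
The plan is to derive this result from Popescu's General N\'eron Desingularization theorem, which is the deep content behind the statement; ordinary (non-nested) Artin approximation does not suffice, because the nested structure $x^\ell \in \6N^\R\{\omega\}$ (independent of $\xi$) while $y^\ell \in \6N^\R\{\omega,\xi\}$ must be preserved exactly, and a naive approximation in the joint variables $(\omega,\xi)$ will generically produce an $x^\ell$ that picks up spurious $\xi$-dependence. Set $A := \6N^\R\{\omega\}$ and $B := \6N^\R\{\omega,\xi\}$; these are the henselizations at the origin of $\R[\omega]$ and $\R[\omega,\xi]$ respectively, and are excellent regular Henselian local rings, with completions $\hat A = \R[[\omega]]$ and $\hat B = \R[[\omega,\xi]]$.

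First I would reformulate the problem ring-theoretically. The polynomial system $\Phi(X,Y)=0$ presents a finitely generated algebra $R = B[X,Y]/(\Phi)$, equipped with the subring $A[X] \subset R$ (coming from $A \hookrightarrow B$) that records the nesting. The hypothesized formal solution $(x(\omega), y(\omega,\xi))$ is then precisely an $A$-algebra homomorphism $R \to \hat B$ whose restriction to $A[X]$ factors through $\hat A$. The conclusion we seek is an analogous $A$-algebra homomorphism $R \to B$ whose restriction to $A[X]$ factors through $A$, agreeing with the formal one modulo the $(\ell+1)$-st power of the maximal ideal.

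Next I would invoke Popescu's theorem. Since $A \to B$ is a regular homomorphism of excellent Noetherian rings, $B$ is a filtered colimit $\varinjlim C_\lambda$ of smooth $A$-algebras. Pushing the $A$-algebra map $R \to \hat B$ down through this colimit (possible because $R$ is finitely presented) gives a factorization $R \to C_\lambda \to \hat B$ for some $\lambda$. Smoothness of $C_\lambda$ over $A$ lets us lift the map $C_\lambda \to \hat B$ to a map $C_\lambda \to B$ agreeing with the formal one to any prescribed order (standard infinitesimal lifting), so the composite $R \to C_\lambda \to B$ is the desired Nash solution with the nesting intact. For the approximation-up-to-order-$\ell$ refinement, I would enlarge $\Phi$ by auxiliary polynomial equations that force the $\ell$-jets of $X$ and $Y$ to coincide with those of $x$ and $y$ (these are finitely many linear equations in additional Taylor-coefficient variables), and apply the existence statement to this enlarged system, which still admits $(x,y)$ as a formal solution.

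The main obstacle is the N\'eron desingularization itself: representing $B$ as a filtered colimit of smooth $A$-algebras is exactly Popescu's theorem, a genuine breakthrough whose proof is far beyond the present scope. Direct inductive strategies in the style of Artin's original 1968 argument manifestly fail here, because they work variable by variable in the full ring $\hat B$ and therefore mix the two variable groups; the splitting can only be extracted by the conceptual mechanism of smoothness of $A \to C_\lambda$. Accordingly, in a self-contained write-up I would cite Popescu's General N\'eron Desingularization as a black box and devote my effort to checking that the hypotheses (excellence, regularity of $A \to B$, finite presentation of $R$) are met for our real Nash setting, and to executing the standard $\ell$-jet-adjunction trick above.
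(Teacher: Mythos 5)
The paper offers no proof of this statement at all --- the attribution ``(Popescu \cite{P})'' in the theorem header is the entire argument --- so your decision to treat General N\'eron Desingularization as a black box is consistent with the paper, and your jet-adjunction trick for the order-$\ell$ refinement is standard and fine. The problem is that the derivation you sketch \emph{from} GND to the nested statement does not work, and it breaks precisely at the nesting, which you yourself identify at the outset as the whole point. A quick sanity check makes this visible: your argument uses nothing about the Nash category except that $A\to B$ is a regular morphism of excellent Noetherian local rings. But $\C\{\omega\}\to\C\{\omega,\xi\}$ is equally a regular morphism of excellent Henselian local rings, so GND applies there too, and your argument would then ``prove'' nested approximation with convergent (analytic) coefficients --- which is false by Gabrielov's example, as the Remark immediately following the theorem in the paper points out. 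So some step must fail.

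Concretely, two steps fail. First, the factorization: you write $B=\varinjlim C_\lambda$ with $C_\lambda$ smooth over $A$ and then factor the map $R\to\widehat{B}$ through some $C_\lambda$; but $R$ is finitely presented over $B$ and the map lands in the completion $\widehat{B}$, not in $B=\varinjlim C_\lambda$, so no such factorization is available. (The standard deduction of \emph{ordinary} Artin approximation from GND instead applies Popescu to $B\to\widehat{B}$, writing $\widehat{B}$ as a filtered colimit of smooth $B$-algebras.) Second, and more fundamentally: even after repairing this, the infinitesimal lifting of $C_\lambda\to\widehat{B}$ to $C_\lambda\to B$ afforded by smoothness controls only the \emph{existence} of a lift agreeing to high order; it provides no mechanism forcing the images of the distinguished elements --- the images of the $X_i$, which happen to land in $\widehat{A}$ under the formal solution --- to land in the subring $A\subset B$ under the lifted map. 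Your construction therefore produces exactly the ``spurious $\xi$-dependence'' you warned against in your first paragraph. The genuine deduction of nested approximation from GND (in \cite{P}, and in the later survey literature) is an induction on the nesting levels that passes through the approximation property of the intermediate Henselian rings such as $\R[[\omega]]\langle\xi\rangle$ together with a separate descent of the $\omega$-block from formal to algebraic power series; it is not a one-step colimit-and-lift argument. Since the paper itself only cites Popescu, the honest course is to cite the nested approximation theorem as such rather than to re-derive it.
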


\begin{Rem} The fact that the system of polynomial equations has algebraic coefficients in the above result is of fundamental importance. Indeed, the
analogous statement for polynomial systems with analytic coefficients does not hold in general, as a well-known example due to Gabrielov
\cite{Gab} shows. \end{Rem}

Applying Theorem~\ref{t:popescu} to the  real equations associated to the 
system of polynomial equations given by
\eqref{e:system}, we obtain, for every
positive integer $\ell$, a real-analytic
$\C^N$-valued algebraic mapping
$h^\ell=h^\ell(x,t,u)$ and a
real-analytic $\C^\kappa$-valued
algebraic mapping $W^\ell=W^\ell(u)$,
both defined in a neighborhood of $0\in
\R^N$ (depending on $\ell$) and agreeing
with $h(x,t,u)$ and $\6H(u)$ up to order
$\ell$ at $0$ respectively. We
complexify the mappings $h^\ell$ and
$W^\ell$ without changing the notation.
Since each mapping $h^\ell=h^\ell(Z)$ is
algebraic and agrees with the mapping
$h=h(Z)$ up to order $\ell$ at $0$, the
proof of Theorem~\ref{t:technic} is completed 
by the following Lemma.

\begin{Lem}\label{l:final}
In the above setting and with the above notation, for $\ell$ sufficiently large, the holomorphic map $h^\ell$ sends a neighborhood of $0$ in $M$ to $M'$.
\end{Lem}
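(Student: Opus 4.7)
The plan is to verify that the pair $(h^\ell, W^\ell)$ fulfils the hypotheses of Lemma~\ref{l:crucial}, to deduce from it a factored polynomial identity for each defining function $r'_j(h^\ell, \overline{h^\ell})$ of $M'$ restricted to $M$, and then to conclude by a vanishing-order argument combined with the integral domain property of the germ ring of real-analytic functions on $M$ at $0$.

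First I would observe that, for $\ell$ sufficiently large, the Krull-topology closeness of $W^\ell(u)$ to $\6H(u)$ guarantees that $P_{k_s,s}(Z,W^\ell(u)) \not\equiv 0$ for each $s$ (cf.\ \eqref{e:nonzero}), so Lemma~\ref{l:crucial}(ii) applies with $S = W^\ell$ and $F = h^\ell$ -- hypothesis \eqref{e:ouf} is nothing but the first block of the system \eqref{e:system}, which holds by construction. For each $j \in \{1,\dots,d\}$ this yields
\[ \6K^j\bigl(Z,\zeta,W^\ell(u),\overline{W^\ell(v)};\, r'_j(h^\ell(Z),\overline{h^\ell(\zeta)})\bigr) \equiv 0 \]
in a neighbourhood of $0 \in \C^{2N}$. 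Restricting to $M$ (that is, $\zeta = \bar Z$, $v = \bar u$) and invoking Lemma~\ref{l:tobeused} applied to $W^\ell$ (whose hypothesis is precisely the second block of \eqref{e:system}), the coefficients $B^j_\nu(Z,\bar Z, W^\ell(u), \overline{W^\ell(u)})$ for $\nu < \nu^j_0$ vanish identically on $M$ near $0$. Factoring out the common power $[r'_j(h^\ell,\overline{h^\ell})]^{\nu^j_0}$ yields, on $M$ near $0$,
\[ \bigl[r'_j(h^\ell(Z),\overline{h^\ell(Z)})\bigr]^{\nu^j_0}\cdot \Psi^\ell_j(Z,\bar Z) = 0, \]
with $\Psi^\ell_j := \sum_{\nu=\nu^j_0}^{\delta_j} B^j_\nu(Z,\bar Z, W^\ell(u), \overline{W^\ell(u)})\bigl[r'_j(h^\ell,\overline{h^\ell})\bigr]^{\nu-\nu^j_0}$.

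The main obstacle will be to show that $\Psi^\ell_j \not\equiv 0$ on $M$ for $\ell$ large. By the very definition of $\nu^j_0$ in \eqref{e:nuj}, $B^j_{\nu^j_0}(Z,\zeta,\6H(u),\overline{\6H(v)})$ is not identically zero on $\6M$; since $M$ is a maximally totally real submanifold of $\6M$, its restriction $B^j_{\nu^j_0}(Z,\bar Z,\6H(u),\overline{\6H(u)})$ to $M$ has a \emph{finite} vanishing order $m_j$ at $0$. Because $B^j_{\nu^j_0}$ is polynomial in its jet arguments and $W^\ell(u) - \6H(u) = O(|u|^{\ell+1})$, for $\ell \geq m_j$ this vanishing order is inherited by $B^j_{\nu^j_0}(Z,\bar Z, W^\ell(u), \overline{W^\ell(u)})$ on $M$. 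On the other hand, because $h^\ell$ agrees with $h$ at $0$ to order $\ell$ and $r'_j(h,\bar h) \equiv 0$ on $M$, the function $r'_j(h^\ell(Z),\overline{h^\ell(Z)})$ vanishes on $M$ at $0$ to order at least $\ell + 1$; thus every summand in $\Psi^\ell_j$ with $\nu > \nu^j_0$ has vanishing order at least $\ell + 1 > m_j$ on $M$. Hence for $\ell$ sufficiently large, $\Psi^\ell_j$ has vanishing order exactly $m_j$ at $0$ on $M$, and is in particular not identically zero.

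To finish, the germ ring at $0$ of $\C$-valued real-analytic functions on $M$ is a convergent power series ring, hence an integral domain; the factored identity together with $\Psi^\ell_j \not\equiv 0$ forces $r'_j(h^\ell(Z),\overline{h^\ell(Z)}) \equiv 0$ as a germ on $M$ at $0$, for every $j \in \{1,\dots,d\}$. This is exactly $h^\ell(M) \subset M'$ near $0$, completing the proof.
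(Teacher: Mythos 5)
Your proof is correct and follows essentially the same route as the paper: both apply Lemma~\ref{l:crucial}(ii) with the solution pair $(W^\ell,h^\ell)$, kill the coefficients $B^j_\nu$ for $\nu<\nu^j_0$ via Lemma~\ref{l:tobeused}, and then compare vanishing orders at the origin of the remaining leading coefficient $B^j_{\nu^j_0}$ against those coming from the defining functions to reach the conclusion. The only (cosmetic) differences are that the paper argues by contradiction along a subsequence on the complexification $\6M$ rather than directly on $M$, and does not name the effective threshold $\ell>\max_j m_j$ which you make explicit; the core mechanism is identical.
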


\begin{proof}[Proof of Lemma~{\rm \ref{l:final}}] Recall first that $(r_1',\ldots,r_d')$ are $d$ real-valued polynomials such that $M$ is given near the origin by the zero set of these $d$ polynomials. Suppose, by contradiction, that the conclusion of the lemma does not hold. Considering a subsequence if necessary, we may assume, without loss of generality, that for every integer $\ell$, \begin{equation}\label{e:assumption}
r_1'(h^\ell(Z),\overline{h^\ell}(\zeta))\not \equiv 0,\ (Z,\zeta)\in \6M,\ {\rm near}\ 0.
\end{equation}
Let 
\begin{equation}\label{e:add}
\6K^1(Z,\zeta,\Lambda,\Gamma;X)=\sum_{\nu=0}^{\delta_1}B_\nu^1(Z,\zeta,\Lambda,\Gamma)X^\nu 
\end{equation}
be the polynomial given by Lemma~\ref{l:crucial} associated to the real polynomial $r_1'$. Since $(h^\ell,W^\ell)$ satisfies the system of equations  \eqref{e:system} for every integer $\ell$, we have for all $Z\in \C^N$ sufficiently close to the origin
$$\sum_{r=0}^{k_s}P_{r,s}(Z,W^\ell(u))\,  (h^\ell_s(Z))^r=0,\ s=1,\ldots,N.$$
Since $\6K^1$ satisfies property (ii) of Lemma~\ref{l:crucial}, we therefore obtain
\begin{equation}\label{e:vienna}
\6K^1(Z,\zeta,W^\ell(u),\overline{W^\ell}(v);r_1'(h^\ell(Z),\overline{h^\ell}(\zeta)))\equiv 0,
\end{equation}
for $(Z,\zeta)\in \C^{2N}$ close to the origin. In what follows, we shall restrict \eqref{e:vienna} to the complexification $\6M$. Since $W^\ell$ satisfies the second equation of the system \eqref{e:system},
Lemma~\ref{l:tobeused} implies that $ B^1_\nu (Z,\overline{Z},W^\ell(u),\overline{W^\ell(u)})=0$, for $0\leq \nu<\nu^1_0$, and for all $Z=(z,w,u)\in M$ sufficiently close to the origin. Equivalently, we have for $(Z,\zeta)\in \6M$ near $0$
\begin{equation}\label{e:sick}
B^1_\nu (Z,\zeta,W^\ell(u),\overline{W^\ell}(v))=0,\quad  0\leq \nu<\nu^1_0.
\end{equation}
Hence \eqref{e:add}, \eqref{e:vienna} and \eqref{e:sick} imply that
\begin{equation}\label{e:thisisit}
\sum_{\nu=\nu^1_0}^{\delta_1}B^1_\nu (Z,\zeta,W^\ell(u),\overline{W^\ell}(v))\bigg( r_1'(h^\ell(Z),\overline{h^\ell}(\zeta))\bigg)^\nu=0,\quad (Z,\zeta)\in \6M,\ {\rm near}\ 0.
\end{equation}
Using \eqref{e:assumption}, \eqref{e:thisisit}  yields that for $(Z,\zeta)\in \6M$ sufficiently close to $0$,
\begin{multline}\label{e:end?}
-B^1_{\nu^1_0}(Z,\zeta,W^\ell(u),\overline{W^\ell}(v))\\= \sum_{\nu=\nu^1_0+1}^{\delta_1}B^1_\nu (Z,\zeta,W^\ell(Z),\overline{W^\ell}(\zeta))\bigg( r_1'(h^\ell(Z),\overline{h^\ell}(\zeta))\bigg)^{\nu-\nu^1_0}.
\end{multline}
Since $h$ sends a neighbhorhood of $M$ to $M'$ and since, for every integer $\ell$, the mapping $h^\ell$ agrees with $h$ up to order $\ell$ at $0$, it follows that $r_1'(h^\ell(Z),\overline{h^\ell}(\zeta))|_{\6M}$ vanishes at least to order $\ell$ at the origin. Hence from \eqref{e:end?}, the same property holds for $B^1_{\nu^1_0}(Z,\zeta,W^\ell(u),\overline{W^\ell}(v))|_{\6M}$. But also 
$W^\ell$ agrees with $\6H$ up to order $\ell$ at the origin,
so for every integer $\ell$, the germ at $0$ of the holomorphic function $B^1_{\nu^1_0}(Z,\zeta,\6H(u),\overline{\6H}(v))|_{\6M}$ vanishes at least up to order $\ell$. 
As a consequence,
$$B^1_{\nu^1_0}(Z,\zeta,\6H(u),\overline{\6H}(v))\equiv 0,\quad \text{for }(Z,\zeta)\in \6M  \text{ near } 0,$$
which contradicts the definition of $\nu^1_0$ by \eqref{e:nuj}. 
This completes the proof of Lemma~\ref{l:final} and therefore the proof of Theorem~\ref{t:technic}.
\end{proof}

\section{Holomorphic foliations in  CR manifolds}\label{s:foliation}

The main goal of this section is to study the holomorphic foliation (with singularities) arising from the existence of holomorphic vector fields tangent to a real-analytic (resp.\ real-algebraic) CR submanifold in complex space. The main result of this section is given by Proposition~\ref{p:algstraight} and shows that the holomorphic foliation of a real-algebraic holomorphically degenerate CR submanifold of $\C^N$ is in fact algebraic. The results of this section 
will be used to derive Theorem~\ref{t:main2holfol} from Theorem~\ref{t:technic} in section
\ref{s:last}.


\subsection{The real-analytic case}\label{sss:choose} Let $M\subset \C^N$ be a real-analytic generic
submanifold of CR dimension $n$ and codimension $d$ so that $N=n+d\geq 2$. In what follows, for a point $p\in
\C^N$, we denote by $\4O_p$ the ring of germs of holomorphic functions at $p$ and by $\4M_p$ its quotient field
of meromorphic functions at $p$; these are the stalks of the sheaf of holomorphic (resp. meromorphic) functions,
which we will accordingly denote by $\4O$ and $\4M$, respectively.

A holomorphic (resp.\ meromorphic) vector field on an open set 
$U\subset \CN$ is a holomorphic (resp. meromorphic) 
section of $T^{(1,0)} \CN$ over $U$, i.e. an expression of the form
\[ X = \sum_{j=1}^N a_j (Z) \dopt{}{Z_j}, \quad a_j \in \4O (U) \, \text{(} \4M (U) \text{ respectively)}.\]
Again the stalks of these sheaves  at $p\in\CN$, i.e. expressions of the form
\[ X = \sum_{j=1}^N a_j (Z) \dopt{}{Z_j}, \quad a_j \in \4O_p \, \text{(} \4M_p \text{ respectively)},\]
will be referred to as germs of holomorphic (respectively meromorphic) vector fields.  
We identify the sheaf of holomorphic (resp. meromorphic) vector fields
with $\4O^N$ ($\4M^N$, respectively), and the  germs of holomorphic (resp.\ meromorphic)
vector fields at a point $p\in \C^N$ with $(\4O_p)^N$ (resp.\ $(\4M_p)^N$).

For $p\in M$, let $\4T_p$ (resp.\ $\4S_p$) be the set of all germs at $p$ of holomorphic (resp.\ meromorphic)
vector fields that are tangent to $M$. $\4T_p$ is an $\4O_p$-submodule of the free module $(\4O_p)^N$; $\4S_p$
carries the structure of a finite dimensional vector space over $\4M_p$. As in \cite{BR95}, for all $p\in M$,
define \begin{equation}\label{e:lambda} \lambda_M (p):={\rm dim}_{\4M_p}\, \4S_p \ \in \{0,\ldots,N-1\}.
\end{equation} We recall from \cite[Section 4]{BR95} a known characterization of $\lambda_M (p)$ through local coordinates. For an arbitrary point $p\in M$ we may choose
(see e.g.\ \cite{BERbook}) local holomorphic coordinates $Z=(z,\eta)\in \C^n \times \C^d$, vanishing at $p$,
such that $M$ is given near the origin by a vector-valued equation of the form
\begin{equation}\label{e:generdefinequa} \eta=\Theta (z,\bar z, \bar \eta), \end{equation} where $\Theta=\Theta
(z,\chi,\sigma)$ is a $\C^d$-valued holomorphic map near $0$ satisfying the reality condition
\begin{equation}\label{e:reality} 
	\Theta (z,\chi,\bar \Theta (\chi,z,\eta))\equiv \eta,
\end{equation} 
	and
\begin{equation} 
	\label{e:normality} \Theta (z,0,\eta) = \Theta (0,\chi, \eta) = \eta; 
\end{equation} 
	such
coordinates are commonly referred to as {\em normal coordinates} for $M$ at $p$. If $M$ is furthermore assumed
to be real-algebraic, we may choose these local holomorphic coordinates as well as the mapping
$\Theta$ to be algebraic.

Recall also that such a choice of normal coordinates can be made for points $q\in M$ nearby $p$ in such a way
that the mapping $\Theta$ depends real-analytically on $q$ (and real-algebraically if $M$ is real-algebraic, see
\cite{BERbook}). We expand the mapping $\bar{\Theta}$ into a Taylor series as follows:
\begin{equation}\label{e:expansion} \bar{\Theta} (\chi,z,\eta):=\sum_{\beta \in \N^n}\bar \Theta_\beta
(z,\eta)\chi^\beta. \end{equation}

In what follows, we keep the above notation and choice of coordinates for a given fixed point $p\in M$. We also write the coordinates $Z=(Z_1,\ldots,Z_N)$. We need to
recall  the following slight generalization of a known criterion (see e.g.\ \cite[Lemma~4.5]{BR95}).

\begin{Lem}\label{l:stanton} In the above setting,  the followings holds :
\begin{enumerate}
\item[(i)] a germ of a holomorphic  vector field $X=\sum_{j=1}^Na_j(Z)\frac{\partial}{\partial Z_j}$ with $a_j\in \4O_0$  is tangent to $M$ if and only if
\begin{equation}\label{e:stantoncriterion}
\sum_{j=1}^Na_j(Z)\frac{\partial \bar{\Theta}_\beta}{\partial Z_j}(Z)\equiv 0,\quad \text{ for all } \beta \in \N^n;
\end{equation}
the same holds for germs of meromorphic vector fields with $\4O_0$ replaced by 
$\4M_0$.
\item[(ii)] there exists a neighborhood $U$ of $0$ in $\C^N$ such that for every point $q\in M\cap U$, a germ of a holomorphic vector field $Y=\sum_{j=1}^Nb_j(Z)\frac{\partial}{\partial Z_j}$ with $b_j\in \4O_q$ is tangent to $M$ $($near $q)$ if and only if 
\begin{equation}\label{e:stantoncriterionbis}
\sum_{j=1}^Nb_j(Z)\frac{\partial \bar{\Theta}_\beta}{\partial Z_j}(Z)\equiv 0,\quad  \text{ for all } \beta \in \N^n,
\end{equation}
for all $Z\in \C^N$ sufficiently close to $q$.
\end{enumerate}
\end{Lem}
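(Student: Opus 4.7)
The strategy, in the spirit of \cite[Lemma 4.5]{BR95}, is to identify tangency of $X$ with a holomorphic identity on the complexification $\6M$ of $M$ and then extract coefficients in $\chi^\beta$. Write $X = \sum_k a_k \partial/\partial z_k + \sum_l c_l \partial/\partial \eta_l$. Since $M$ is generic and cut out both by $\rho_j := \eta_j - \Theta_j(z,\bar z,\bar\eta)$ and by $\bar\rho_j := \bar\eta_j - \bar\Theta_j(\bar z, z, \eta)$, tangency of $X$ as a $(1,0)$-field amounts to $X\rho_j|_M \equiv 0$ and $X\bar\rho_j|_M \equiv 0$ for $j = 1, \ldots, d$.

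Since $X$ annihilates antiholomorphic quantities, the condition $X\bar\rho_j|_M \equiv 0$ can be rewritten, using the expansion \eqref{e:expansion}, as
\[
 \sum_{\beta\in\N^n}\left[\sum_{m=1}^N a_m(Z)\frac{\partial \bar\Theta_{\beta,j}}{\partial Z_m}(Z)\right]\bar z^\beta \equiv 0 \quad \text{on } M.
\]
Lifting this identity from $M$ to $\6M = \{\sigma = \bar\Theta(\chi,z,\eta)\}$, where $(z,\eta,\chi)$ are independent holomorphic coordinates, one obtains a holomorphic power series in $\chi$ with $Z$-holomorphic coefficients that vanishes on $M \subset \6M$, a maximally totally real real-analytic submanifold, hence vanishes identically by the edge-of-the-wedge principle. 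Thus $X\bar\rho_j|_M \equiv 0$ is equivalent to \eqref{e:stantoncriterion}.

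It remains to show that \eqref{e:stantoncriterion} also forces $X\rho_j|_M \equiv 0$. By \eqref{e:normality}, $\bar\Theta_0(z,\eta) = \eta$, so the $\beta = 0$ instance of \eqref{e:stantoncriterion} forces $c_j \equiv 0$ for every $j$. Differentiating \eqref{e:reality} in $z_k$ yields
\[
\frac{\partial \Theta_j}{\partial z_k}\bigl(z,\chi,\bar\Theta(\chi,z,\eta)\bigr) = -\sum_{l=1}^d \frac{\partial\Theta_j}{\partial\sigma_l}\bigl(z,\chi,\bar\Theta(\chi,z,\eta)\bigr)\,\frac{\partial \bar\Theta_l}{\partial z_k}(\chi,z,\eta).
\]
Multiplying by $a_k$, summing over $k$, and using that \eqref{e:stantoncriterion} regrouped over $\beta$ yields $\sum_k a_k \partial_{z_k}\bar\Theta_l(\chi,z,\eta) \equiv 0$ identically in $(\chi,z,\eta)$, we obtain $\sum_k a_k \partial_{z_k}\Theta_j\bigl(z,\chi,\bar\Theta(\chi,z,\eta)\bigr) \equiv 0$. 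Restricting to $\chi = \bar z$ on $M$, where $\bar\Theta(\bar z,z,\eta) = \bar\eta$, and combining with $c_j \equiv 0$, this is precisely $X\rho_j|_M \equiv 0$.

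The meromorphic case follows at once by clearing a common denominator $g \in \4O_0$, applying the holomorphic case to the holomorphic field $gX$, and dividing out. For (ii), the derivation is local and uses only the fixed data $\Theta, \bar\Theta$; taking $U$ small enough that the normal-coordinate equation $\eta = \Theta(z,\bar z,\bar\eta)$ describes $M$ uniformly throughout $U$, the same argument carried out around any $q \in M \cap U$ instead of $0$ yields \eqref{e:stantoncriterionbis}. The main subtle point is the ``if'' direction of (i): in normal coordinates, the $\beta = 0$ case of \eqref{e:stantoncriterion} automatically annihilates the $\partial/\partial\eta_l$-components of $X$, after which the reality identity converts the remaining content of \eqref{e:stantoncriterion} into the complementary tangency equation.
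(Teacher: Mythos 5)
Your proof is correct and follows essentially the same core strategy as the paper's (complexify the tangency condition in the extra variable $\chi$ and extract Taylor coefficients), but is more explicit where the paper is terse. The paper simply asserts that tangency of $Y$ at $q$ is equivalent to $Y(\bar\Theta(\chi,Z))=0$ for $(\chi,Z)$ in a full neighborhood of $(\bar z_q,q)$, then uses the identity principle to extend this vanishing across the whole domain $U_1\times U_q$, including $\chi=0$, and reads off the coefficients $\bar\Theta_\beta$. You instead unpack tangency into the two conditions $X\rho_j|_M=0$ and $X\bar\rho_j|_M=0$, identify the second with the stated criterion by complexification, and then deduce the first from the criterion via normality (the $\beta=0$ coefficient $\bar\Theta_0=\eta$ killing the $\partial/\partial\eta_l$ components) and the reality relation \eqref{e:reality}. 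This is a genuinely useful elaboration of the paper's ``it is easy to see'' step. Two minor points: what you invoke is not the edge-of-the-wedge theorem but simply the identity principle for a holomorphic function of $(\chi,Z)$ vanishing on the maximally real copy of $M$ inside $\6M$; and in part (ii) you should make explicit, as the paper does, that the identity principle is what transports the vanishing of $Y(\bar\Theta(\chi,Z))$ from a neighborhood of $\chi=\bar z_q$ across all of $U_1$, so that the coefficients taken at $\chi=0$ (the $\bar\Theta_\beta$ themselves) are the ones that vanish.
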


\begin{proof} It is enough to prove (ii). We fix a polydisc $U_1\subset \C^n$ and $U_2\subset \C^d$ both containing the origin such that 
$\bar \Theta$ is holomorphic in $U_1\times U_1\times U_2$. Let $U:=U_1\times U_2$. Given $q=(z_q,\eta_q)\in M\cap U$, it is easy to see that a germ at $q$ of a holomorphic vector field $Y$ is tangent to $M$ near $q$ if and only  $Y(\bar \Theta(\chi, Z))=0$ for all $Z$ in some connected neighborhood $U_q\subset \C^N$ of $q$ and for all $\chi$ in some neighbhorhood  $\widetilde U_1\subset \C^n$ of $\bar z_q$. Since
$$Y(\bar \Theta(\chi, Z))=\sum_{j=1}^Nb_j(Z)\frac{\partial \bar \Theta}{\partial Z_j}(\chi, Z),$$
the map $Y(\bar \Theta (\chi, Z))$ is in fact holomorphic in $U_1\times U_q$ and vanishes on $\widetilde U_1\times U_q$. Hence $Y(\bar \Theta (\chi, Z))$ vanishes identically in $U_1\times U_q$. From this,  the desired conclusion \eqref{e:stantoncriterionbis} follows.
\end{proof}

We have the following result (see \cite{BR95}).

\begin{Lem}\label{l:substitute} Let $M\subset \C^N$ be a real-analytic generic submanifold with $N\geq 2$, $p\in
M$, and $(z,\eta)$ normal coordinates for $M$ near $p$. Then the following identity holds:
\begin{equation}\label{e:kappa} \lambda_M(p)+r_M(p)=N, \end{equation} where $r_M(p)$ is the generic rank of the
holomorphic map $(\C^N,0) \ni (z,\eta)\mapsto (\bar{\Theta}_\beta(z,\eta))_{\beta \in \N^n}$ defined by {\rm
\eqref{e:expansion}}. \end{Lem}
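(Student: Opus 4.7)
The plan is to turn the identity into an application of the rank--nullity theorem for a matrix over the field $\4M_0$, with the matrix being the Jacobian of the map $F=(\bar\Theta_\beta)_{\beta\in\N^n}$. Writing a germ of a meromorphic vector field as $X=\sum_{j=1}^N a_j(Z)\,\partial/\partial Z_j$ with $a_j\in\4M_0$, Lemma~\ref{l:stanton}(i) identifies the set $\4S_p$ of tangent meromorphic vector fields at $p$ with the right null space (over $\4M_0$) of the linear system
\begin{equation}\label{e:prop}
\sum_{j=1}^N a_j(Z)\,\frac{\partial \bar\Theta_\beta}{\partial Z_j}(Z)=0,\qquad \beta\in\N^n.
\end{equation}
Let $J$ denote the (a priori infinite-row, $N$-column) matrix with entries $\partial_{Z_j}\bar\Theta_\beta\in\4O_0$, so that $\4S_p=\ker_{\4M_0}(J)$ and hence $\lambda_M(p)=N-\rk_{\4M_0}(J)$.

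The second step is to identify $\rk_{\4M_0}(J)$ with the generic rank $r_M(p)$ of $F$. Since $J$ has $N$ columns, its rank over $\4M_0$ is at most $N$, and by the usual characterization of rank in terms of nonvanishing minors, one can select finitely many multi-indices $\beta_1,\dots,\beta_K\in\N^n$ such that the finite submatrix $J_K$ formed by the corresponding rows has the same rank over $\4M_0$ as $J$. For this finite truncation, $\rk_{\4M_0}(J_K)$ equals the generic rank of the holomorphic map $F_K:=(\bar\Theta_{\beta_1},\dots,\bar\Theta_{\beta_K})$, since the rank of a Jacobian over the meromorphic field coincides with the maximal pointwise rank on a neighborhood. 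By the definition of $r_M(p)$ as the generic rank of the full family $F$ (equivalently, the supremum of generic ranks of finite truncations, which is bounded by $N$ and hence attained), enlarging the index set $\{\beta_1,\dots,\beta_K\}$ only can increase the rank up to $r_M(p)$, and we conclude $\rk_{\4M_0}(J)=r_M(p)$.

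Combining the two steps gives $\lambda_M(p)+r_M(p)=N$, which is \eqref{e:kappa}. The main (minor) obstacle is the infinite-row nature of $J$, handled by the finite-truncation argument above; once one passes to a truncation achieving the rank, everything is pure linear algebra over the field $\4M_0$.
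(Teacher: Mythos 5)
Your proposal is correct and follows essentially the same route as the paper's proof: use Lemma~\ref{l:stanton}(i) to identify $\4S_p$ with the kernel over $\4M_0$ of the linear system, truncate to a finite family of multi-indices so that rank--nullity over the field $\4M_0$ applies, and observe that the matrix rank over $\4M_0$ equals the generic rank of the corresponding holomorphic map. The only cosmetic difference is that the paper fixes a truncation index $\ell_0$ at the start, while you pass to a finite submatrix achieving the rank afterwards.
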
 \begin{proof} Choose an integer $\ell_0$ large enough so that the generic rank
of the mapping $(z,\eta)\mapsto (\bar{\Theta}_\beta(z,\eta))_{|\beta| \leq \ell_0}$ equals $r_M(p)$. Consider the
$\4M_0$ linear mapping $\6L\colon (\4M_0)^N\to (\4M_0)^{c_0}$ given by 
$$
\6L (a_1,\ldots,a_N):=
\left(\sum_{j=1}^Na_j\frac{\partial \bar{\Theta}_\beta}{\partial Z_j}\right)_{|\beta|\leq\ell_0},
$$
where $c_0={\rm Card}\, \{\beta \in \N^n: |\beta|\leq \ell_0 \}$. By Lemma~\ref{l:stanton} (i),
$\4S_0={\rm Ker}\, \6L$ and therefore, by the rank theorem, 
$N= \lambda_M(0)+{\rm dim}_{\4M_0}\, {\rm Im}\,\6L$. 
But the rank of the $\4M_0$-linear mapping $\6L$ coincides with the rank of the matrix
$\bigg(\displaystyle \frac{\partial \bar{\Theta}_\beta}{\partial Z_j}\bigg)_{1\leq j\leq N,\atop |\beta|\leq
\ell_0}$ with entries in the field $\4M_0$. This rank is exactly the generic rank of the
(germ of the) holomorphic map $(z,\eta)\mapsto (\bar{\Theta}_\beta(z,\eta))_{|\beta| \leq \ell_0}$. The proof of
Lemma~\ref{l:substitute} is complete. \end{proof}

A first consequence of Lemma~\ref{l:substitute} is that $r_M(p)$ is independent of the choice of normal coordinates. Another useful consequence of Lemma~\ref{l:substitute} is the following (see \cite{BR95} for the hypersurface case).

\begin{Lem}\label{l:anotherlemma} 
	Let $M\subset \C^N$ be a real-analytic generic submanifold. Then the functions
$\lambda_M$ and $r_M$ are constant on any connected component of $M$. 
\end{Lem}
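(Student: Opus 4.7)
The plan is to reduce the statement, via Lemma~\ref{l:substitute}, to showing that the function $\lambda_M$ is locally constant on $M$; the conclusion for $r_M$ then follows from the identity $\lambda_M+r_M=N$ and connectedness. To exhibit local constancy of $\lambda_M$ at a fixed point $p_0\in M$, I would choose normal coordinates for $M$ at $p_0$ and the associated family $\{\bar{\Theta}_\beta\}_{\beta\in\N^n}$ from the expansion \eqref{e:expansion}. The key point is that, unlike in Lemma~\ref{l:substitute}, these $\bar{\Theta}_\beta$ are computed using the single chart centered at $p_0$; but by Lemma~\ref{l:stanton}(ii), this very same system detects tangency for every nearby $q\in M$.

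The second step is to extend Lemma~\ref{l:stanton}(ii) from germs of holomorphic vector fields to germs of meromorphic ones. This is a routine clearing-of-denominators argument: if $X=\sum a_j\partial/\partial Z_j$ with $a_j\in\4M_q$, writing $a_j=b_j/h$ with $b_j,h\in\4O_q$ and $h\not\equiv 0$, the tangency of $X$ is equivalent to that of the holomorphic field $\sum b_j\partial/\partial Z_j$, and dividing by $h$ shows that the criterion \eqref{e:stantoncriterionbis} transcribes unchanged. Consequently, for every $q\in M\cap U$ (with $U$ the neighbhourhood produced by Lemma~\ref{l:stanton}(ii), which I take connected), the module $\4S_q$ is precisely the kernel of the $\4M_q$-linear map $\mathcal{L}_q:(\4M_q)^N\to(\4M_q)^{\N^n}$ with matrix $A(Z):=(\partial\bar{\Theta}_\beta/\partial Z_j(Z))_{\beta,j}$, so by the rank theorem
\[
\lambda_M(q)=N-\operatorname{rank}_{\4M_q}A.
\]

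The heart of the argument is then that $\operatorname{rank}_{\4M_q}A$ does not depend on $q\in U$. The rank of $A$ over $\4M_q$ equals the largest $m$ for which some $m\times m$ minor of $A$ is non-zero as a germ at $q$. But each such minor is a fixed holomorphic function on the connected open set $U$, so by the identity principle it is either identically zero on $U$ or non-zero as a germ at \emph{every} point of $U$. Therefore the set of minors that contribute to the rank over $\4M_q$ is independent of $q\in U$, and so is the rank. This gives $\lambda_M\equiv N-r$ on $M\cap U$ for a constant $r$, hence $\lambda_M$ is locally constant.

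The only step requiring genuine care is the passage from Lemma~\ref{l:stanton}(ii) (stated for holomorphic fields with base point $p_0$) to meromorphic fields based at varying nearby points $q$: one must keep clearly separated the choice of normal coordinates (made once at $p_0$) and the varying germ point $q$, and justify that the criterion furnished by those single-chart coordinates still captures \emph{all} germs in $\4S_q$. Once this is in hand, the identity-principle argument on a connected neighbhourhood $U$ of $p_0$ yields local constancy of $\lambda_M$, and Lemma~\ref{l:substitute} transports the result to $r_M$.
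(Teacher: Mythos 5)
Your proposal is correct, and it follows a genuinely different route from the paper's argument. The paper obtains local constancy of $\lambda_M$ by a squeeze between two one-sided bounds combined with the identity $\lambda_M + r_M = N$ from Lemma~\ref{l:substitute}: on the one hand $\lambda_M$ is lower semicontinuous directly from its definition as $\dim_{\4M_p}\4S_p$ (a set of $\4M_p$-independent germs of tangent meromorphic fields remains $\4M_q$-independent at nearby $q$); on the other hand $r_M$ is also lower semicontinuous, which the paper obtains by choosing normal coordinates whose defining map $\Theta$ varies real-analytically with the base point $q$ and invoking lower semicontinuity of the generic rank of a real-analytic family of maps. You instead fix a single normal chart at $p_0$, extend Lemma~\ref{l:stanton}(ii) from holomorphic to meromorphic fields by the routine clearing-of-denominators argument, realize $\4S_q$ for every nearby $q$ as the kernel of the $\4M_q$-linear map whose matrix $A = (\partial\bar{\Theta}_\beta/\partial Z_j)$ does not change with $q$, and then apply the identity principle to the minors of $A$ on a connected neighbourhood to conclude that $\mathrm{rank}_{\4M_q} A$, and hence $\lambda_M(q) = N - \mathrm{rank}_{\4M_q} A$, is independent of $q$. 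Your version avoids the (non-trivial, though standard) real-analytic dependence of normal coordinates on the base point and the dual-semicontinuity step; the price is having to make explicit the meromorphic extension of Lemma~\ref{l:stanton}(ii) and the minor-based rank computation, both of which you treat adequately. Transporting the result to $r_M$ via $\lambda_M + r_M = N$ at the end is the same in both arguments.
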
 
\begin{proof} Pick an arbitrary
point $p\in M$. We first note that  the definition of $\lambda_M$ implies that $\lambda_M$ is
lower semi-continuous. Next, let $r_M(p)$ be as defined in Lemma~\ref{l:substitute}. Since we may choose normal
coordinates for points $q\in M$ nearby $p$ in such a way that the mapping $\Theta$ depends real-analytically on
$q$, it follows that for all points $q$ sufficiently close to $p$, $r_M(q)\geq r_M(p)$. Hence by \eqref{e:kappa}, we
have that $\lambda_M(q)\leq \lambda_M(p)$ for all such $q$'s. This latter fact together with the lower
semi-continuity of $\lambda_M$ implies that $\lambda_M$ is constant in a neighborhood of $p$. Since the choice
of $p$ is arbitrary, we obtain the desired statement. 
\end{proof}

In what follows, given a positive integer $r$, we shall denote by $\6E_h^{r}$ the set of all germs through the
origin in $\C^r$ of holomorphically nondegenerate real-analytic CR submanifolds. Recall also that 
	given a positive integer $r$ and two germs of real submanifolds $(M,p)$ and $(M',p')$ in $\C^r$  we 
	 write $(M_1,p)\sim_h (M_2,p')$ if there exists a germ 
	of a biholomorphism of $\C^r$ at $p$ which maps the germ $(M,p)$ to $(M',p')$.

%

The following result provides a precise description of the holomorphic foliation on 
a (holomorphically degenerate) real-analytic CR submanifold.

\begin{Pro}\label{p:holstraight}
Let $M\subset \C^N$ be a connected real-analytic CR submanifold with $N\geq 2$. 
Then there exists a well-defined integer $\lambda_M \in \{0,\ldots,N-1\}$ 
and a closed proper real-analytic subvariety ${\Upsilon}_M\subset M$ such that 
$$M\setminus {\Upsilon}_M
=\{p\in M: (M,p)\sim_h (\C^{\lambda_M}\times \widetilde M,0),\ 
{\rm where}\ \widetilde M \in \6E_h^{N-\lambda_M}\}.$$
Furthermore, the real-analytic subvariety $\Upsilon_M$ is locally given by the intersection
of germs of complex-analytic subvarieties with $M$. 
\end{Pro}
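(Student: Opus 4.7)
My plan is first to reduce to the generic case. Any real-analytic CR submanifold of $\C^N$ locally splits, in suitable holomorphic coordinates, as $M_0 \times \{0\} \subset \C^{N-k} \times \C^k$ with $M_0$ generic in $\C^{N-k}$, where $k$ is the excess of the real codimension over $N-n$. This reduces the statement to the case of generic $M$, with $\lambda_M = \lambda_{M_0} + k$ and a corresponding trivial factor in $\Upsilon_M$. So from now on $M$ is connected and generic. By Lemma~\ref{l:anotherlemma}, $\lambda_M(\cdot)$ is constant on $M$; call its value $\lambda_M$. Fix a point $p_0 \in M$ and choose germs $X_1, \ldots, X_{\lambda_M}$ forming a $\4M_{p_0}$-basis of $\4S_{p_0}$. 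These extend as meromorphic, $M$-tangent vector fields on some neighborhood $U$ of $p_0$ in $\C^N$. Let $A_U \subset U$ be the complex-analytic subvariety equal to the union of the polar loci of the $X_j$ and the zero locus of $X_1 \wedge \cdots \wedge X_{\lambda_M}$.

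The central technical step is to prove that, on $V := U \setminus A_U$, the distribution $\mathcal{D}$ generated by $X_1, \ldots, X_{\lambda_M}$ is an involutive holomorphic distribution of rank $\lambda_M$. Each commutator $[X_i, X_j]$ is a meromorphic $M$-tangent vector field, hence belongs to $\4S_{p_0}$, giving an identity $[X_i, X_j] = \sum_k c_{ij}^k X_k$ with $c_{ij}^k \in \4M_{p_0}$, valid as an equality of meromorphic vector fields near $p_0$. Since on $V$ the $X_k$ are holomorphic and $\C$-linearly independent at each point while $[X_i, X_j]$ is holomorphic, the coefficients $c_{ij}^k$ are recovered from a linear system with an invertible holomorphic matrix, hence are holomorphic on $V$. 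Thus $\mathcal{D}$ is holomorphically involutive on $V$, and by the holomorphic Frobenius theorem, for every $q \in (M \cap U) \setminus A_U$ there is a germ of biholomorphism $\Phi \colon (\C^N, q) \to (\C^{\lambda_M} \times \C^{N-\lambda_M}, 0)$ straightening $\mathcal{D}$ to the foliation by $\{\tau = \mathrm{const}\}$ in coordinates $(\zeta, \tau) \in \C^{\lambda_M} \times \C^{N-\lambda_M}$. Since each $X_j$ is a $(1,0)$ vector field tangent to $M$, both its real and imaginary parts are tangent to $M$ as real vector fields, so their flows preserve $M$; consequently the complex leaves of $\mathcal{D}$ through points of $M$ lie in $M$, forcing $\Phi(M)$ to have the product form $\C^{\lambda_M} \times \2M$ for a germ of a real-analytic CR submanifold $\2M \subset \C^{N-\lambda_M}$ at the origin.

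The holomorphic nondegeneracy of $\2M$ then follows by contradiction: any nontrivial holomorphic vector field tangent to $\2M$ pulls back through $\Phi^{-1}$ to a holomorphic $M$-tangent vector field near $q$ whose $\Phi$-expression has a nonzero $\partial/\partial \tau$-component, and such a vector field is $\4O_q$-linearly independent from $X_1, \ldots, X_{\lambda_M}$, contradicting $\dim_{\4M_q} \4S_q = \lambda_M$. Finally, I set $\Upsilon_M := M \cap A$, where $A$ is the complex-analytic subset of a neighborhood of $M$ in $\C^N$ obtained by patching the local $A_U$'s; this patching is basis-invariant because a change of meromorphic basis multiplies $X_1 \wedge \cdots \wedge X_{\lambda_M}$ by a nonvanishing meromorphic factor, so its zero locus (modulo polar loci) is intrinsic. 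By construction $\Upsilon_M$ is locally the intersection of $M$ with a complex-analytic subvariety, and it is proper in $M$ because the construction itself produces points of $M \setminus \Upsilon_M$. The main obstacle I anticipate is the involutivity step---specifically, verifying that the $\4M$-linear expansion of $[X_i, X_j]$ acquires holomorphic coefficients on $V$, together with the basis-invariant patching of the $A_U$'s into a global $A$; once these are in hand, the Frobenius straightening and the nondegeneracy of $\2M$ are direct.
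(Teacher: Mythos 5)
The main gap is in the construction of $\Upsilon_M$. You fix a point $p_0$, choose an $\4M_{p_0}$-basis $X_1,\ldots,X_{\lambda_M}$ of $\4S_{p_0}$, and define $A_U$ as the union of the polar loci of the $X_j$ with the zero locus of $X_1\wedge\cdots\wedge X_{\lambda_M}$. You then assert that this is basis-invariant because a change of meromorphic basis multiplies the wedge by a ``nonvanishing'' meromorphic factor. That factor is the determinant of the transition matrix over $\4M_{p_0}$; it is $\not\equiv 0$, but nothing forces it to be zero-free, so replacing the basis can add or remove codimension-one components of the zero locus. As a toy model, if $\partial/\partial z_3$ is holomorphic, everywhere tangent to $M$, and nowhere-vanishing, you could equally well pick $X_1 = z_3\,\partial/\partial z_3$ as an $\4M_{p_0}$-basis for $p_0$ with $z_3(p_0)\neq 0$; then $A_U$ contains $\{z_3=0\}\cap M$, which consists entirely of regular points. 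So the set you call $\Upsilon_M$ depends on the chosen basis, may be strictly larger than the actual singular locus, the local $A_U$'s need not patch consistently, and the inclusion $\{q\ \text{regular}\}\subseteq M\setminus\Upsilon_M$ is never established---your argument proves only the reverse inclusion.

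The paper closes this gap by working with the $\4O$-module of tangent \emph{holomorphic} vector fields $\4T$ rather than a single $\4M$-basis of $\4S$. Lemma~\ref{l:stanton}(ii) realizes the tangent holomorphic vector fields over a neighbourhood $U$ as the sheaf of relations among finitely many holomorphic maps; Oka's coherence theorem then yields finitely many holomorphic generators $X_1,\ldots,X_r$ (typically $r>\lambda_M$) of $\4T_p$ that \emph{also generate} $\4T_q$ for every $q\in M\cap U$. This generation-at-nearby-points is exactly what your construction lacks: with it, the rank function $\mu(q)=\Rk(X_1(q),\ldots,X_r(q))$ satisfies $\mu(q)=\lambda_M$ if and only if $q$ enjoys property $(\spadesuit)$, so that $\Upsilon_M\cap U=\{q\in M\cap U:\mu(q)<\lambda_M\}$ is well-defined and cut out by minors of the $(X_1,\ldots,X_r)$-matrix. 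Your Frobenius straightening and nondegeneracy-by-contradiction steps are sound (and make explicit the involutivity that the paper leaves implicit in ``straightening the flows''), but without the coherence step the proof only establishes one of the two required inclusions.
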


\begin{proof} We first treat the case where $M$ is a generic submanifold of $\C^N$. Let $\lambda_M\in \{0,\ldots,N-1\}$ be the
integer defined by \eqref{e:lambda} and Lemma~\ref{l:anotherlemma}. Set $$\Omega^h_M:=\{p\in M: (M,p)\sim_h
(\C^{\lambda_M}\times \widetilde M,0),\ {\rm where}\ \widetilde M \in \6E_h^{N-\lambda_M}\}.$$ In what follows
we shall say that a point $p\in M$ satisfies property $(\spadesuit)$ if there exists
$Y_1,\ldots,Y_{\lambda_M}\in \4T_p$ and a sufficiently small neighborhood $W$ of $p$ in $\C^N$ such that these
$\lambda_M$ holomorphic vector fields are defined and linearly independent at every point $q\in W$. We first
note that if a point $p\in \Omega_M$ then clearly $p$ satisfies property $(\spadesuit)$. Conversely, if $p$
satisfies property $(\spadesuit)$, then by straigthening the flows of $Y_1,\ldots,Y_{\lambda_M}$, 
we see that
the germ $(M,p)$ is biholomorphically
equivalent to the germ at the origin of a submanifold of the form $\C^{\lambda_M}\times \widetilde M$ where
$\widetilde M$ is a germ through the origin of a real-analytic generic submanifold. Furthermore, $\widetilde M$
must necessarily be holomorphically nondegenerate since otherwise we could find $\lambda_M +1$ holomorphic
vector fields tangent to $M$ near $p$ and generically linearly independent in a neighbhorhood of $p$, which
contradicts the definition of $\lambda_M$.

Pick an arbitrary point $p\in M$. Since the ring $\4O_p$ is noetherian, it follows that $\4T_p$ is a finitely
generated submodule of $(\4O_p)^N$. Hence, there exists a connected neighborhood $U$ of $p$ in $\C^N$ and $r$
holomorphic vector fields $X_1,\ldots,X_r$ defined in $U$ such that $\4T_p$ is generated by the germs at $p$ of
the vector fields $X_1,\ldots,X_r$. In fact, we need a stronger property than that, and using Lemma~\ref{l:stanton} (ii)
and Oka's theorem (see e.g.\ \cite[Theorem~6.4.1]{Ho}), it is possible (after shrinking $U$ if necessary) to
assume that for every point $q\in M\cap U$, the germs at $q$ of the vector fields $X_1,\ldots,X_r$ generate
$\4T_q$. We also may assume that $U$ is chosen sufficiently small so that $M\cap U$ is connected. It is not
difficult to see that the generic rank of $(X_1,\ldots,X_r)$ over $U$ equals $\lambda_M$ as defined above.
Setting $\mu (q)={\rm Rk}\, (X_1(q),\ldots,X_r(q))$ for all $q\in M\cap U$, we now claim that
\begin{equation}\label{e:claim} \{q\in M\cap U: q\ {\rm satisfies}\ (\spadesuit)\}=\{q\in M\cap U: \mu
(q)=\lambda_M\}. 
\end{equation}

Note that we trivially have $\{q\in M\cap U: \mu (q)=\lambda_M\}\subset \{q\in M\cap U: q\ {\rm satisfies}\
(\spadesuit)\}$. Conversely, if $q\in M\cap U$ satisfies $(\spadesuit)$, there exists
$Y_1,\ldots,Y_{\lambda_M}\in \4T_q$ and a sufficiently small neighborhood $W\subset U$ of $q$ in $\C^N$ such
that these $\lambda_M$ holomorphic vector fields are defined and linearly independent at every point $q\in W$.
Since the germs at $q$ of $X_1,\ldots,X_r$ generate the $\4O_q$-module $\4T_q$ and since
$Y_1,\ldots,Y_{\lambda_M}$ are linearly independent at every point $q\in W$, it follows that the rank of
$(X_1,\ldots,X_r)$ equals $\lambda_M$ at every point of $W$ and hence at $q$. This shows the claim
\eqref{e:claim}.

Using the first part of the proof, we therefore obtain that $\Omega_M^h\cap U=\{q\in M\cap U: \mu
(q)=\lambda_M\}$ and hence $\Upsilon_M\cap U$ is given by the vanishing of a finite number of real-analytic
functions on $U$ (namely the restriction to $M\cap U$ of the minors of 
size $\lambda_M$ of the Jacobian matrix of $(X_1,\dots,X_r)$). This shows that $\Upsilon_M$ is a closed proper real-analytic subvariety of $M$ locally defined by the intersection of $M$ with a complex-analytic subvariety.

The case where $M$ is not generic follows from the generic case, after noticing that there exists an integer
$s\in \{1,\ldots,N-1\}$ such that for every point $p_0\in M$, the germ $(M,p_0)$ is locally biholomorphically
equivalent to a germ at the origin of a submanifold of the form $\{0\}\times M_1\subset \C^s \times \C^{N-s}$
with $M_1$ being a real-analytic generic submanifold in $\C^{N-s}$. We leave the remaining details to the
reader. The proof of Proposition~\ref{p:holstraight} is complete. \end{proof}

\begin{Rem}\label{r:lambda} It is clear that the integer $\lambda_M$ in Proposition~\ref{p:holstraight} is
unique and may be defined as follows. If $M$ is as in Proposition~\ref{p:holstraight}, for every point $p\in M$,
let $\6X_p\subset \C^N$ be the intrinsic complexification of $M$ at $p$ (see e.g.\ \cite{BERbook}). Then $\6X_p$
is the germ at $p$ of a complex submanifold (of smallest dimension) containing the germ of $M$ at $p$. Consider
the field $\6K_p$ of germs at $p$ of meromorphic functions in $\6X_p$ and $\6S_p$ the $\6K_p$ vector space of
all germs at $p$ of meromorphic vector fields of $\6X_p$ tangent to $M$. Then it follows from
Lemma~\ref{l:anotherlemma} that $M\ni p\mapsto {\rm dim}_{\6K_p}\, \6S_p$ is constant and is the desired integer
$\lambda_M$. \end{Rem}

\subsection{The real-algebraic case} We shall now establish the algebraic version of
Proposition~\ref{p:holstraight} when the submanifold $M$ is furthermore assumed to be real-algebraic. 
Analogously to the real analytic case, for a given  positive integer $r$, we denote by $\6E_a^{r}$ 
the set of all
germs through the origin in $\C^r$ of holomorphically nondegenerate real-algebraic CR submanifolds. We also 
recall that given  two germs of real submanifolds $(M,p)$ and $(M',p')$ in $\C^r$, we 
write $(M,p)\sim_a (M',p')$ if there exists a germ of an algebraic biholomorphism of $\C^r$ at $p$
which sends the germ $(M,p)$ to $(M',p')$.

%

We are now ready to state the algebraic version of Proposition~\ref{p:holstraight}.

\begin{Pro}\label{p:algstraight}
Let $M\subset \C^N$ be a connected real-algebraic CR submanifold, $N\geq 2$. Let $\lambda_M$ and $\Upsilon_M$ be the associated integer and real-analytic subvariety of $M$ given by Proposition~{\rm \ref{p:holstraight}}. Then $\Upsilon_M$ is in fact a proper real-algebraic subvariety of $M$ and the following holds:
\begin{equation}\label{e:oka}
M\setminus {\Upsilon}_M=\{p\in M: (M,p)\sim_a (\C^{\lambda_M}\times \widetilde M,0),\ {\rm where}\ \widetilde M \in \6E_a^{N-\lambda_M}\}.
\end{equation}
Furthermore, the real-algebraic subvariety $\Upsilon_M$ is locally given by the intersection
of germs of complex-algebraic subvarieties with $M$.
\end{Pro}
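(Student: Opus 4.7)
I would follow the proof of Proposition~\ref{p:holstraight} step by step, replacing each real-analytic or holomorphic tool by its algebraic (Nash) counterpart. First, as in Proposition~\ref{p:holstraight}, one reduces to the generic case: every germ of a real-algebraic CR submanifold is algebraically equivalent to a germ of the form $\{0\}\times M_1$ with $M_1$ real-algebraic generic, so it suffices to treat generic $M$.

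Assume $M$ is generic. At any point $p\in M$ one can choose \emph{algebraic} normal coordinates, so that the Taylor coefficients $\bar\Theta_\beta$ appearing in \eqref{e:expansion} are algebraic. By Lemma~\ref{l:stanton}, a holomorphic vector field $X=\sum a_j\,\partial/\partial Z_j$ is tangent to $M$ near a nearby $q$ if and only if
\[
\sum_{j=1}^N a_j(Z)\,\frac{\partial\bar\Theta_\beta}{\partial Z_j}(Z)\equiv 0 \quad \text{for all } \beta\in\N^n,
\]
and the coefficient matrix $A(Z):=\bigl(\partial\bar\Theta_\beta/\partial Z_j\bigr)_{\beta,j}$ has algebraic entries and generic rank $r_M=N-\lambda_M$ by Lemma~\ref{l:substitute}. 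Choose $\ell_0$ so that the truncation $|\beta|\leq \ell_0$ already attains this generic rank. The locus $V\subset \C^N$ where the truncated matrix has rank less than $r_M$ is a closed complex-algebraic subvariety (vanishing of $r_M\times r_M$ minors). On $\C^N\setminus V$, any non-vanishing such minor allows one by Cramer's rule to extract $\lambda_M$ algebraic holomorphic vector fields $X_1,\dots,X_{\lambda_M}$ spanning the kernel of $A$ pointwise; by Lemma~\ref{l:stanton} these are tangent to $M$ and linearly independent at every point of the complement. Thus within the chart, $\Upsilon_M$ is identified with $M\cap V$, which shows at once that $\Upsilon_M$ is a proper real-algebraic subvariety of $M$ and that it is locally cut out by the intersection of $M$ with a complex-algebraic subvariety.

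To establish \eqref{e:oka} at a point $p\in M\setminus\Upsilon_M$, one upgrades the biholomorphic straightening of Proposition~\ref{p:holstraight} to an algebraic one. The holomorphic distribution spanned by the algebraic vector fields $X_1,\dots,X_{\lambda_M}$ near $p$ is involutive: brackets of tangent holomorphic fields remain tangent, and at such $p$ the rank of $\4T_q$ equals $\lambda_M$, so every bracket must lie in the $\lambda_M$-dimensional span. Since flows of algebraic vector fields solve algebraic ODEs and are therefore Nash, choosing an algebraic transversal $S$ to the distribution at $p$ and setting
\[
\Psi(t_1,\dots,t_{\lambda_M},s):=\phi^{X_1}_{t_1}\circ\cdots\circ\phi^{X_{\lambda_M}}_{t_{\lambda_M}}(s),\quad s\in S,
\]
yields an algebraic map whose differential at $(0,p)$ is invertible, and hence algebraically invertible by the Nash inverse function theorem. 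In the resulting algebraic coordinates the distribution is straightened, so $M$ becomes $\C^{\lambda_M}\times \widetilde M$ with $\widetilde M$ a germ of a real-algebraic CR submanifold of $\C^{N-\lambda_M}$. Holomorphic nondegeneracy of $\widetilde M$ follows exactly as in Proposition~\ref{p:holstraight}: any extra tangent holomorphic vector field to $\widetilde M$ would yield a $(\lambda_M+1)$-st independent tangent field on $M$, contradicting the definition of $\lambda_M$.

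The main obstacle is precisely this last step: realizing the straightening of an algebraic involutive distribution through an algebraic biholomorphism. This rests on the Nash analogues of Frobenius' theorem and the inverse function theorem, i.e.\ on the closure of the Nash category under the relevant ODE and implicit-function constructions. Once these are in hand, the algebraicity of $\Upsilon_M$ itself is a purely linear-algebraic consequence of working over the ring of algebraic power series with the matrix $A(Z)$.
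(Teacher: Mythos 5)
Your proposal contains two genuine gaps, both corresponding to places where the paper takes a fundamentally different route.

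\textbf{First gap (algebraic straightening).} The claim that ``flows of algebraic vector fields solve algebraic ODEs and are therefore Nash'' is false: the flow of $x\,\partial/\partial x$ is $(t,x)\mapsto e^t x$, which is not algebraic in $t$. So one cannot upgrade the Frobenius straightening to an algebraic biholomorphism simply by composing flows of Nash vector fields. This is precisely the obstacle the paper overcomes in Lemma~\ref{l:observe}: after showing that the flow $\phi^1$ of the tangent vector field $L_1$ leaves each coefficient $\bar\Theta_\beta$ invariant (identity \eqref{e:trick}), one obtains the algebraic system \eqref{e:systartin} satisfied by the (merely holomorphic) straightening map $\Psi$, applies Artin's approximation theorem to produce an algebraic solution $\widehat\Psi$, and then invokes \cite[Lemma~14.3]{BMR} to check that $\widehat\Psi$ still sends $\widetilde M\times\C$ to $M$. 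Without such an approximation step the statement \eqref{e:oka} does not follow.

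\textbf{Second gap (the identification $\Upsilon_M=M\cap V$).} From your construction you only get one inclusion. If $q\notin V$, Cramer's rule produces $\lambda_M$ holomorphic tangent vector fields linearly independent at $q$, hence $q\notin\Upsilon_M$; this gives $\Upsilon_M\subset M\cap V$. But the reverse inclusion is not automatic: at a point $q$ where the truncated Jacobian matrix $A$ drops rank, there may still exist $\lambda_M$ holomorphic tangent vector fields that are linearly independent at $q$ (they need not lie in the $\4O_q$-module generated by the Cramer vector fields, which all acquire common zeros on $V$). The tangency condition for $Y_j$ near $q$ only shows $\operatorname{span}(Y_j(q))\subset\ker A(q)$, i.e.\ $\operatorname{rk}A(q)\leq r_M$, which does not exclude strict inequality. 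In other words, the rank-drop locus $V$ of the matrix $A$ can be strictly larger than $\Upsilon_M$. The paper avoids this by applying Oka's coherence theorem to obtain holomorphic generators $X_1,\dots,X_r$ of the \emph{module} $\4T_q$ valid for all $q$ near $p$, proves the identity $\{q:\mu(q)=\lambda_M\}=\{q:\,q\text{ satisfies }(\spadesuit)\}$ using these generators, and then upgrades the generators to algebraic ones by combining coherence of the algebraic sheaf (via algebraic Weierstrass division) with the equational criterion for flatness of $\cps{x-q}$ over $\6N\{x-q\}$. That module-theoretic detour is exactly what makes $\Upsilon_M$ computable as the degeneracy locus of an algebraic system, and it is not replaced by Cramer's rule applied to the Jacobian matrix alone.
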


In order to prove Proposition~\ref{p:algstraight}, we shall need the following observation.
\begin{Lem}\label{l:observe} Let $M$ be a germ of a real-algebraic CR submanifold in $\C^{N}$ and $M'$ be a germ of a real-analytic CR  submanifold in $\C^{N'}$, both through the origin, with $1 \leq N'<N$. Assume that \begin{equation}\label{e:holeq}
(M,0)\sim_h (M'\times \C^{N-N'},0).
\end{equation}
Then there exists a germ of a real-algebraic CR submanifold $\widehat M\subset \C^{N'}$ through the origin such that
\begin{equation}\label{e:algeq}
(M,0)\sim_a (\widehat M\times \C^{N-N'},0).
\end{equation}
\end{Lem}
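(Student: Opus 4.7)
The plan is to produce, by induction on $k = 0, \ldots, \lambda := N - N'$, a sequence of algebraic biholomorphic changes of coordinates on $\C^N$ that successively put $M$ into the form $\C^k \times \hat M_k$ with $\hat M_k \subset \C^{N-k}$ real-algebraic CR. At the final step we obtain $(M,0) \sim_a (\C^\lambda \times \hat M, 0)$, and a coordinate swap yields $(M,0) \sim_a (\hat M \times \C^\lambda, 0)$. By an algebraic analogue of the end of the proof of Proposition~\ref{p:holstraight}, the non-generic case reduces to the generic one via an initial algebraic straightening of the intrinsic complexification (which is complex-algebraic when $M$ is real-algebraic), so we may assume $M$ is generic.

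To start the induction, I first extract algebraic tangent vector fields from the hypothesis. The biholomorphism $H:(\C^N,0) \to (\C^N,0)$ realizing $(M,0) \sim_h (M' \times \C^\lambda, 0)$ pulls back the coordinate vector fields $\partial/\partial w'_j$ on the $\C^\lambda$ factor to $\lambda$ commuting holomorphic vector fields $Y_1, \ldots, Y_\lambda$ defined near $0$, tangent to $M$, and pointwise linearly independent. Let $\4T_0$ and $\4T_0^{\mathrm{alg}}$ denote the modules of germs at $0$ of holomorphic (respectively algebraic holomorphic) vector fields tangent to $M$. Choosing normal coordinates and the defining map $\Theta$ algebraic (possible since $M$ is real-algebraic), the functions $\partial \bar\Theta_\beta / \partial Z_j$ in Lemma~\ref{l:stanton}(i) lie in $\6N\{Z\}$, so $\4T_0$ and $\4T_0^{\mathrm{alg}}$ are the kernels of the \emph{same} algebraic linear system acting on $\4O_0^N$ and $\6N\{Z\}^N$ respectively. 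Since $\4O_0$ is (faithfully) flat over the Noetherian local ring $\6N\{Z\}$, one has $\4T_0 = \4O_0 \otimes_{\6N\{Z\}} \4T_0^{\mathrm{alg}}$ and $\4T_0^{\mathrm{alg}}$ is finitely generated. Writing each $Y_j$ as an $\4O_0$-linear combination of algebraic generators of $\4T_0^{\mathrm{alg}}$ and evaluating at $0$, the linear independence of $Y_1(0),\ldots,Y_\lambda(0)$ forces the existence of $X_1,\ldots,X_\lambda \in \4T_0^{\mathrm{alg}}$ with $X_1(0),\ldots,X_\lambda(0)$ linearly independent in $T_0\C^N$.

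For the inductive step, suppose we have algebraic coordinates on $\C^N$ in which $M = \C^k \times \hat M_k$ and $\lambda - k$ algebraic vector fields $V_{k+1},\ldots,V_\lambda$ on $\C^{N-k}$ tangent to $\hat M_k$ with $V_{k+1}(0),\ldots,V_\lambda(0)$ linearly independent at $0$; the base case is provided by Step~1 with $\hat M_0 = M$ and $V_j = X_j$. Apply the algebraic flow-box theorem to $V_{k+1}$ on $\C^{N-k}$ (valid because $V_{k+1}$ is Nash-algebraic with $V_{k+1}(0) \neq 0$): this yields a Nash change of coordinates in which $V_{k+1} = \partial/\partial Z_{k+1}$. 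Tangency of $V_{k+1}$ to $\hat M_k$ forces $\hat M_k = \C \times \hat M_{k+1}$ with $\hat M_{k+1} \subset \C^{N-k-1}$ real-algebraic CR. Extending the Nash coordinate change trivially to $\C^N$ gives $M = \C^{k+1} \times \hat M_{k+1}$. After transforming $V_{k+2},\ldots,V_\lambda$ by this change of coordinates, subtracting their $\partial/\partial Z_{k+1}$-components (which preserves tangency to $\hat M_k$ since $\partial/\partial Z_{k+1}$ is tangent to $\hat M_k$), and restricting to $\{Z_{k+1} = 0\}$, we obtain $\lambda - k - 1$ algebraic vector fields on $\C^{N-k-1}$ tangent to $\hat M_{k+1}$, still linearly independent at $0$ (because the $V_j(0)$'s were linearly independent modulo $\partial/\partial Z_{k+1}|_0$). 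This advances the induction, and at $k = \lambda$ we reach the desired normal form.

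The main obstacle is Step~1: producing algebraic tangent vector fields to $M$ of the correct rank at $0$. This is where the real-algebraicity of $M$ enters crucially, through the flatness of $\4O_0$ over $\6N\{Z\}$ combined with the algebraic coefficients of the tangency system in Lemma~\ref{l:stanton}; without real-algebraicity the argument necessarily breaks down, since the conclusion itself fails. The remainder is bookkeeping for the inductive straightening, each single step handled by the algebraic flow-box theorem (a consequence of the algebraic implicit function theorem applied to the Nash flow of a Nash vector field).
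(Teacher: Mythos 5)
Your Step 1 (producing algebraic tangent vector fields of maximal rank at $0$ via flatness of $\4O_0$ over $\6N\{Z\}$ and the finitely generated module $\4T_0^{\mathrm{alg}}$) is correct and is in the spirit of the flatness argument the paper uses later in the proof of Proposition~\ref{p:algstraight}. It is, however, not needed for this lemma, and the place where your argument genuinely breaks is Step 2: there is no ``algebraic flow-box theorem.'' The flow of a Nash (algebraic) vector field is not Nash in general, so neither is the straightening it produces. The basic obstruction is that the flow solves an ODE with algebraic right-hand side, and such solutions typically leave the algebraic category: for $V=\partial_{x_1}+x_2\partial_{x_2}$ the flow is $(x_1+t,\,x_2 e^t)$, and the only straightening coordinate $u_2$ with $Vu_2=0$ is a function of $x_2e^{-x_1}$, which is never algebraic. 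Tangency to a real-algebraic CR manifold does not save you: $V=z\,\partial_z$ is an algebraic vector field tangent to the real-algebraic hypersurface $\{\Im w=0\}\subset\C^2$, yet its flow $(e^t z,w)$ is transcendental. So the ``Nash flow of a Nash vector field'' invoked at the end of your argument does not exist, and the inductive step collapses.

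The paper's proof gets around precisely this obstacle. It does not try to make the flow $\Psi$ algebraic; instead it observes that the flow of a vector field tangent to $M$ preserves each algebraic function $\bar\Theta_\beta$, so the composed functions $\bar\Theta_\beta\circ\Psi$ are algebraic (equal to $\bar\Theta_\beta(0,w_2,\ldots,w_N)$) even though $\Psi$ itself is not. This gives a system of algebraic functional equations $\bar\Theta_\beta(Z)=C_\beta(w)$ admitting $\Psi$ as a convergent solution, and Artin's approximation theorem then produces an algebraic local biholomorphism $\widehat\Psi$ satisfying the same system; \cite[Lemma~14.3]{BMR} guarantees that $\widehat\Psi$, like $\Psi$, maps $\widetilde M\times\C$ into $M$. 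That Artin step is the actual content of the lemma, and it is missing from your proposal. Your Step 1 would be a clean addendum if you wanted to work only with algebraic vector fields, but it neither replaces nor repairs the straightening step; the flow-preservation plus Artin mechanism (applied one vector field at a time, as in the paper, with algebraicity of $\widetilde M$ propagating inductively) is what is needed.
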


\begin{proof}[Proof of Lemma~{\rm \ref{l:observe}}] First note that if $M$ is not generic in $\C^N$, then there exists a positive integer $r\in \{1,\ldots,N-1\}$ and a real-algebraic generic submanifold $M_1\subset \C^{N-r}$ through the origin such that $(M,0)\sim_a(\{0\}\times M_1,0)$ (see e.g.\ \cite{BERbook}). From this fact, we see that we may assume in what follows that $M$ is generic in $\C^N$.

Let $n$ be the CR dimension of $M$ and $d$ its codimension. Choose normal coordinates $Z=(Z_1,\ldots,Z_N)=(z,\eta)\in \C^n \times \C^d$ as in Section~\ref{sss:choose}, where $\Theta\colon (\C^{n+N},0)\to (\C^d,0)$ is an algebraic holomorphic map of its arguments.

We first note that, by a usual straightening argument, \eqref{e:holeq} is equivalent to say that there exists a neighborhood $V$ of $0$ in $\C^N$ and $N-N'$ holomorphic vector fields $L_1,\ldots,L_{N-N'}$ tangent to $M\cap V$ such that these $N-N'$ vector fields are linearly independent at every point of $V$. Let $\phi^1(t,Z)$ be the complex flow of the vector field $L_1$, flow that is defined for $(t,Z) \in \C \times \C^N$ sufficiently close to the origin. Recall that for sufficiently small $t$, $(\C^N,0)\ni Z\mapsto \phi^1(t,Z) \in (\C^N,0)$ is a germ at $0$ of a biholomorphism sending $(M,0)$ to itself. Furthermore, writing 
$L_1=\sum_{j=1}^Na_j(Z)\frac{\partial}{\partial Z_j}$, we know that $\phi^1=(\phi^1_1,\ldots,\phi^1_N)$ satisfies
\begin{equation}\label{e:der}
\frac{\partial \phi^1_j}{\partial t}(t,Z)=a_j(\phi^1(t,Z)),\quad j=1,\ldots,N,\quad {\rm and}\ \phi^1(0,Z)=Z,
\end{equation}
for $(t,Z) \in \C \times \C^N$ sufficiently close to the origin. Since $L_1$ is tangent to $M$ (near $0$), using the notation defined in  \eqref{e:expansion} and Lemma~\ref{l:stanton} (i), we have
\begin{equation}\label{e:derder}
\sum_{j=1}^Na_j(\phi^1(t,Z))\frac{\partial \bar{\Theta}_\beta}{\partial Z_j}(\phi^1(t,Z))=0,\quad \forall \beta \in \N^n, 
\end{equation}
for $(t,Z) \in \C \times \C^N$ sufficiently close to the origin. Combining \eqref{e:der} and \eqref{e:derder}, we obtain that near the origin in $\C^{N+1}$
$$\frac{\partial}{\partial t} \bigg( \bar{\Theta}_\beta (\phi^1(t,Z))\bigg)\equiv 0,\quad \forall \beta \in \N^n.$$
We therefore have the following identity (that is contained, in the hypersurface case, in the statement of \cite[Proposition 5.2]{BR95})
\begin{equation}\label{e:trick}
\bar{\Theta}_\beta (\phi^1(t,Z))=\bar{\Theta}_\beta (Z),\quad \forall \beta \in \N^n,
\end{equation}
for all $(t,Z) \in \C \times \C^N$ sufficiently close to the origin. In what follows, we may assume without loss of generality that $a_1(0)\not =0$. Consider now the $\C^N$-valued holomorphic mapping $\Psi$ defined in a neighbhorhood of the origin in $\C^N$  by $\Psi (w_1,\ldots,w_N)=\phi^1(w_1,0,w_2,\ldots,w_N)$. It is a standard fact that $\Psi$ is a local biholomorphism at the origin and that $\Psi_*(\frac{\partial}{\partial w_1})=L_1$. Hence, since $L_1$ is tangent to $M$ near $0$, the vector field $\frac{\partial}{\partial w_1}$ is tangent to the germ of the real-analytic generic submanifold $\Psi^{-1}(M)$. Therefore the germ of $\Psi^{-1}(M)$ at $0$ is of the form $\widetilde M \times \C$ where $\widetilde M$ is a germ at $0$ of a real-analytic generic submanifold in $\C^{N-1}$. We now claim that $\widetilde M$ is in fact real-algebraic and that $(\widetilde M \times \C,0)\sim_a (M,0)$. To prove the claim, we note that it follows from \eqref{e:trick} that for all $\beta \in \N^n$, $\bar{\Theta}_\beta (\Psi (w))=\bar{\Theta}_\beta (0,w_2,\ldots,w_N)=:C_\beta (w)\in (\6N \{w\})^d$. The system of algebraic equations
\begin{equation}\label{e:systartin}
\bar{\Theta}_\beta (Z)=C_\beta (w)
\end{equation}
has a convergent solution $Z=\Psi (w)$ and therefore, from an approximation theorem due to Artin \cite{A69}, there exists an algebraic solution $\widehat \Psi \colon (\C^N,0)\to (\C^N,0)$ of \eqref{e:systartin} that agrees with $\Psi$ up to order one at $0$. Hence $\widehat \Psi$ is a local algebraic biholomorphism. Furthermore, since $\Psi$ sends $(\widetilde M \times \C,0)$ to $(M,0)$ and since for all $\beta \in \N^n$
$$\bar{\Theta}_\beta (\Psi (w))=\bar{\Theta}_\beta (\widehat \Psi (w)),$$
it follows from \cite[Lemma~14.3]{BMR} that $\widehat \Psi$ sends also $(\widetilde M \times \C,0)$ to $(M,0)$, which proves the claim. 

Since $(\widetilde M \times \C,0)\sim_a (M,0)$,  there exists a neighborhood $W$ of $0$ in $\C^{N-1}$ and $N-N'-1$ holomorphic vector fields tangent to $\widetilde M\cap W$ such that these $N-N'-1$ vector fields are linearly independent at every point of $W$. We can therefore apply the above reasoning to the real-algebraic generic submanifold $\widetilde M\subset \C^{N-1}$ and, by induction, we reach the desired conclusion.
\end{proof}

\begin{proof}[Proof of Proposition~{\rm \ref{p:algstraight}}] We note that if $\lambda_M=0$ (i.e.\ $M$ is holomorphically nondegenerate), there is nothing to prove, and therefore assume that $\lambda_M>0$. Lemma~\ref{l:observe} leads immediately to \eqref{e:oka}. It remains to show that $\Upsilon_M$ is a  real-algebraic subvariety of $M$. In order to show this, we pick an arbitrary point $p\in M$ choose associated normal coordinates, in which $p=0$. 
We note that as in the proof of  
Proposition~\ref{p:holstraight}, we can choose a positive integer $\ell_0$ such 
that in a neighbourhood $U$ of $0$, a germ at $q\in U$ of a holomorphic vector field is tangent to $M$ if and 
only if $\sum_{j=1}^N\frac{\partial {\bar \Theta}_{\beta}}{\partial Z_j} (Z)\, a_j(Z) = 0$ near $q$ for all $\beta$ with $|\beta|\leq \ell_0$. Thus, we have realized the
holomorphic vector fields tangent to $M\cap U$ as a subsheaf of $\4O^N|_U$ given by the relations between 
the $(\frac{\partial {\bar \Theta}_{\beta}}{\partial Z_1},\ldots,\frac{\partial {\bar \Theta}_{\beta}}{\partial Z_N})$ with $|\beta|\leq \ell_0$. Oka's Theorem then implies that we can find a finite number of 
holomorphic generators of $\4T_0$ which also generate $\4T_q$ for $q$ near $0$. 

We  now claim that we can actually choose these generators algebraic if all the ${\bar \Theta}_\beta$
are algebraic; the remainder of the proof is then verbatim to the proof of Proposition~\ref{p:holstraight}.

First note that the sheaf of algebraic functions on $\CN$ is coherent. This can be seen by 
applying the Weierstrass division theorem for algebraic power series in the proof of Oka's Theorem
as found in e.g. \cite{Ho}. In particular, the sheaf of {\em algebraic} relations between the 
$(\frac{\partial {\bar \Theta}_\beta}{\partial Z_1},\ldots \frac{\partial {\bar \Theta}_\beta}{\partial Z_N})$ for $|\beta|\leq \ell_0$ is locally finitely generated, i.e. shrinking $U$ if necessary, there exist algebraic vector fields $(X_1,\ldots, X_r)$ defined over $U$ and tangent to $M$, such that every germ at a point $q\in M\cap U$ of an algebraic vector field tangent to  $M$ near $q$  can be written as an algebraic linear combination of $X_1,\dots, X_r$ .

Now for every point $q\in U$, the ring of convergent power series $\cps{x-q}$ centered at $q$  is a flat algebra over the ring of germs at $q$ of algebraic functions  $\6N \{x-q\}$. We can thus 
apply the ``equational criterion for flatness'' (see e.g.  \cite[Theorem 7.6]{Matsumura})
to see that every $X\in \4T_q$ can (in the sense of germs at $q$) 
be written as a linear combination with coefficients in $\cps{x-q}$ of holomorphic vector fields tangent to $M$ with algebraic coefficients in $\6N \{x-q\}$. Since we have already observed that all germs at $q$ of algebraic vector fields tangent to $M$ are generated by $X_1,\dots , X_r$, we see that  the claim is proved and, as noted before, this finishes the proof of Proposition~\ref{p:algstraight}.
\end{proof}

\begin{Rem}
	The authors thank Clemens Bruschek for pointing out the simple proof by applying the 
	flatness criterion in the second half of the proof of Proposition~\ref{p:algstraight}.
\end{Rem}

\begin{Rem}\label{r:comparebrz}(i) In \cite{BRZ2}, the authors have considered another proper real-algebraic subvariety $\widetilde \Upsilon_M$ attached to any connected real-algebraic CR submanifold $M\subset \C^N$. This subvariety $\widetilde \Upsilon_M$ is defined as follows: $M\setminus \widetilde \Upsilon_M:=\{p\in M: (M,p)\sim_a (\C^{\lambda_M}\times \widetilde M,0),\ {\rm where}\ \widetilde M \in \widetilde{\6E}_a^{N-\lambda_M}\},$ where $\widetilde{\6E}_a^{N-\lambda_M}$ denotes the set of germs of all real-algebraic finitely nondegenerate real-analytic CR submanifolds in $\C^{N-\lambda_M}$ through the origin (see e.g.\ \cite{BERbook, BRZ1} for the definition). Since we always have the strict inclusion $\widetilde{\6E}_a^{N-\lambda_M}\subset {\6E}_a^{N-\lambda_M}$, the subvariety $\Upsilon_M$ given by Proposition~\ref{p:algstraight} is in general strictly smaller than $\widetilde \Upsilon_M$.\\
(ii) The reader should observe that the real-algebraic subvariety $\Upsilon_M$ given by Proposition~\ref{p:algstraight} can also be defined as follows:
$M\setminus {\Upsilon}_M$ consists of all points $p$ in $M$ for which there exists
an integer $k$, $0\leq k\leq N-1$, such that $(M,p)\sim_a (\C^{k}\times \widetilde M,0),\ {\rm where}\ \widetilde M \in \6E_h^{N-k}$.
Indeed, note that if there exists $k\in \{0,\ldots,N-1\}$ such that $(M,p)\sim_h (\C^{k}\times \widetilde M,0)$ where $\widetilde M \in \6E_h^{N-k}$, then  we must necessarily have $k=\lambda_M$ in view of the definition of $\lambda_M$.
\end{Rem}

\section{Proof of Theorem~\ref{t:main2holfol}}\label{s:last}

By definition $\Sigma_M=\Sigma_M^1\cup \Sigma_M^2$ where $\Sigma^1_M$ is the set of points that are not of constant orbit dimension and $\Sigma^2_M$ is the set of points that are not regular for the foliation on $M$. By Proposition~\ref{p:algstraight}, $\Sigma^2_M$ is a closed proper real-algebraic subvariety of $M$ and since $\Sigma^1_M$ possesses clearly also the same property (see e.g.\ \cite{BERbook}), conclusion (i) of the theorem follows.

To prove (ii), we may assume that $M$ is a generic submanifold in $\C^N$ 
since the non-generic case can easily be reduced to the generic
case. Let $p\in M \setminus \Sigma_M$. Let also $M'\subset \C^N$ be another real-algebraic generic
submanifold, $p'\in M'$ and let $h\colon (\C^N,p)\to (\C^N,p')$ be a local biholomorphic map sending $M$ to $M'$. 
If $M$
is holomorphically nondegenerate, the desired conclusion follows immediately from Theorem~\ref{t:technic}.

If not, there exists an integer $k\in \{1,\ldots,N-1\}$ such that $(M,p)\sim_a (\widetilde M \times \C^k,0)$ where
$\widetilde M$ is a holomorphically nondegenerate real-algebraic generic submanifold through the origin in $\C^{N-k}$.
Furthermore, since $(M',p')\sim_h (M,p)$, it follows from Lemma~\ref{l:observe} that 
$(M',p')\sim_a (\widetilde M'\times\C^k,0)$, where $\widetilde M'$ is a real-algebraic generic submanifold through the origin in $\C^N$ which is also necessarily holomorphically nondegenerate. 
In order to prove Theorem~\ref{t:main2holfol}, we may therefore assume that
$(M,p)=(\widetilde M \times \C^k,0)$ and that $(M',p')= (\widetilde M'\times \C^k,0)$. We also write the mapping $h\colon
(\C^{N-k}_{\widetilde Z}\times \C^k_{\widehat Z},0)\to (\C^{N-k}_{\2Z'}\times \C^k_{\7Z'},0)$, where $h(Z)=h(\2 Z,\7
Z)=(h_1(\2 Z,\7 Z),h_2(\2 Z,\7 Z))\in \C^{N-k}\times \C^k$. 
We claim that $h_1$ is independent of $\7Z=(\7Z_1,\ldots,\7Z_k)$. Indeed, consider
the holomorphic vector $V=h_*(\frac{\partial }{\partial \7Z_j})$ where $j=1,\ldots,k$. We have 
$$V=\frac{\partial h_1}{\partial \7Z_j}(h^{-1}(Z'))\cdot \frac{\partial}{\partial
\2Z'}+\frac{\partial h_2}{\partial \7Z_j}(h^{-1}(Z'))\cdot \frac{\partial}{\partial \7Z'}.$$ 
Since the vector field
$\frac{\partial }{\partial \7Z_j}$ is tangent to $\2M \times \C^k$ near 0, $V$ is tangent to $\2M'\times \C^k$ near
$0$. This implies that for every $\7Z'\in \C^k$ sufficiently close to the origin, the holomorphic vector field in
$\C^{N-k}$ $$\frac{\partial h_1}{\partial \7Z_j}(h^{-1}(\2Z',\7Z'))\cdot \frac{\partial}{\partial \2Z'}$$ is tangent to
$\2M'$ near $0$. Since $\2M'$ is holomorphically nondegenerate, we must necessarily have $\frac{\partial h_1}{\partial
\7Z_j}(h^{-1}(\2Z',\7Z'))\equiv 0$ near $0\in \C^N$ for all $j=1,\ldots,k$, which proves the claim. 

We can thus write $h(\2Z,\7Z)=(h_1(\2Z),h_2(\2Z,\7Z))$ where $h_1\colon (\C^{N-k},0)\to (\C^{N-k},0)$ is a local
biholomorphism sending $(\2M,0)$ to $(\2M',0)$. 
Since $\2M$ is holomorphically nondegenerate and because the local CR orbits
of $\2M$ must  also be of constant dimension in a neighbhorhood of $0$, we may apply Theorem~\ref{t:technic} to conclude
that for every integer $\ell$ there exists a local algebraic biholomorphism $h_1^\ell \colon (\C^{N-k},0)\to (\C^{N-k},0)$
sending $(\2M,0)$ to $(\2M',0)$ that agrees with $h_1$ up to order $\ell$ at $0$. For every integer $\ell$, define
$h_2^\ell$ to be the $\ell$-th order Taylor polynomial of $h_2$ at $0$. Then the local algebraic biholomorphism
$h^\ell:=(h_1^\ell,h_2^\ell)$ satisfies all the required conditions. The proof of Theorem~\ref{t:main2holfol} is complete.

\end{document}